\numberwithin{equation}{section}
\newcommand{\R}{\mathbb{R}}
\newcommand{\N}{\mathbb{N}}
\newcommand{\E}{\mathbb{E}}
\newcommand{\F}{\mathcal{F}}
\renewcommand{\P}{\mathbb{P}}
\newcommand{\dif}[1]{\mathrm{d}#1}
\newcommand{\diff}[1]{\, \mathrm{d} #1}
\newtheorem{Theorem}{Theorem}[section]
\newtheorem{Definition}[Theorem]{Definition}
\newtheorem{Lemma}[Theorem]{Lemma}
\newtheorem{Proposition}[Theorem]{Proposition}
\newtheorem{Remark}[Theorem]{Remark}
\newtheorem{Assumption}{Assumption}[section]
\begin{document}
\title{Large deviations principles of sample paths and invariant measures of numerical methods for parabolic SPDEs\footnotemark[1]
}
\footnotetext{\footnotemark[1]This work is funded by National Natural Science Foundation of China (No. 11971470, No. 11871068, No. 12031020, No. 12022118, No. 12026428) and by Youth Innovation Promotion Association CAS.
}

\author{Chuchu Chen, Ziheng Chen,
Jialin Hong, Diancong Jin\footnotemark[2]
        \\{\small LSEC, ICMSEC, Academy of Mathematics and Systems Science,}
        \\{\small Chinese Academy of Sciences, Beijing 100190, China;}
        \\{\small School of Mathematical Sciences, University of
                  Chinese Academy of Sciences, Beijing 100049, China}
       }
\footnotetext{\footnotemark[2]Emails:
             chenchuchu@lsec.cc.ac.cn,
             zihengchen@lsec.cc.ac.cn(corresponding author),
             hjl@lsec.cc.ac.cn,
             diancongjin@lsec.cc.ac.cn.
             }
\date{}
\maketitle

\begin{abstract}
	 For parabolic stochastic partial differential equations (SPDEs), we show that the numerical methods, including the spatial spectral Galerkin method and further the full discretization via the temporal accelerated exponential Euler method, satisfy the uniform sample path large deviations. Combining the exponential tail estimate of invariant measures, we establish the large deviations principles (LDPs) of invariant measures of these numerical methods.
	  Based on the error estimate between the rate function of the considered numerical methods and that of the original equation, we prove that these numerical methods can weakly asymptotically preserve the LDPs of sample paths and invariant measures of the original equation. This work provides an approach to proving the weakly asymptotical preservation for the above two LDPs for SPDEs with small noise via  numerical methods,  by means of the minimization sequences.	
\end{abstract}

\textbf{Key words:}
       {\rm\small
        weakly asymptotical preservation,
        large deviations principles, numerical methods,
        sample paths, invariant measures,
        parabolic stochastic partial differential equations
       }

\textbf{AMS subject classifications:}
       {\rm\small 60H15, 60F10, 37M25}

\section
{Introduction}\label{sec:intro}

The sample path large deviations characterize  the asymptotic behaviour of the probability that stochastic differential equations (SDEs) with small noise deviate from their deterministic counterparts (see e.g., \cite{freidlin1988random,sowers1992large,peszat1994large,cerrai2004large}). On the basis of the uniform sample path large deviations, the LDP of invariant measures of some SDEs with small noise can be established (see e.g., \cite{sowers1992largeinvariant,cerrai2005largeinvariant,
	gadat2013large,brzezniak2017large}), which characterizes  the fluctuation of invariant measures with respect to the noise intensity  on an exponential scale. For a numerical method of an SPDE with small noise, we are interested in whether it can asymptotically preserve these exponential decay rate of the probabilities of rare events in the large deviation estimates for the sample paths and invariant measures of the underlying SPDEs. The investigation on the numerically asymptotical preservation of LDPs
is at an early stage and just a very limited number of papers \cite{chen2021asymptotically,chen2020large,hong2020numerically,
chen2020symplectic,hong2021numerical} are devoted to this topic.
As far as we know, there is few work on the asymptotical preservation of the LDPs of sample paths and invariant measures via numerical methods for SPDEs.

In this work, we focus on the  numerically asymptotical preservation for the LDPs of sample paths and invariant measures  of
the following parabolic SPDEs with small noise on a real separable Hilbert space $H := L^{2}((0,1);\R)$,
\begin{eqnarray}\label{eq:SPDE}
      \diff{X_{x}^{\varepsilon}(t)}
      =
      \big(AX_{x}^{\varepsilon}(t)
      +
      F(X_{x}^{\varepsilon}(t))\big)\diff{t}
      +
      \varepsilon Q^{\frac12}\diff{W(t)},
      \quad t > 0,
      \quad X_{x}^{\varepsilon}(0) = x \in H,
\end{eqnarray}
where $\varepsilon > 0$ denotes the scale of the noise,
$\{W(t)\}_{t \geq 0}$ is an $H$-valued cylindrical Wiener process on a normal filtered probability space $(\Omega,\F,\P; \{\F_t\}_{t \geq 0})$, and $Q$ is a self-adjoint, positive definite bounded linear operator on $H$. Equation \eqref{eq:SPDE} models a variety of random phenomena (see, e.g., \cite{da2014stochastic,lord2014introduction}
and references therein), such as heat conduction in media, turbulent flow in fluid dynamics, molecular collisions in gases and liquids, as well as electric fluctuations in resistors.

The relevant work on LDPs of
invariant measures of numerical methods is only \cite{hong2020numerically}, which is for the case of stochastic ordinary differential equations.
Authors therein show that the midpoint scheme can asymptotically preserve the LDP of invariant measures of Langevin equation with quadratic potential in the small noise limit.
Since the general nonlinear SPDEs suffer from a lack of explicit expressions of invariant measures, the corresponding LDPs of invariant measures of numerical methods usually resort to the uniform sample path large deviations.
The main contributions of this work are:
(i) we give the LDPs of invariant measures of numerical methods for parabolic SPDEs based on the uniform LDP of the corresponding sample paths;
(ii) we provide a framework to establish the weakly asymptotical preservation of the LDPs of sample paths and invariant measures of numerical methods for parabolic SPDEs.

We first discretize \eqref{eq:SPDE} in space by the spectral Galerkin method
\begin{equation}\label{eq:SPDEsemiintro}
      \diff{X_{y}^{\varepsilon,n}(t)}
      =
      \big(P_{n}AX_{y}^{\varepsilon,n}(t)
      +
      P_{n}F(X_{y}^{\varepsilon,n}(t))\big)\diff{t}
      +
      \varepsilon P_{n}Q^{\frac12}\diff{W(t)},
      \quad t > 0,
      \quad X_{y}^{\varepsilon,n}(0) = y := P_{n}x,
\end{equation}
where $P_{n}$ is a projection operator from $H$ to the finite dimensional space $H_{n} \subset H$ (see Subsection \ref{sec:SGM}).
%
%
Following the approach in \cite{peszat1994large}, we obtain
that the family of sample paths of \eqref{eq:SPDEsemiintro} satisfies a uniform LDP with the good rate function $I_{0,T}^{n,y}$ given by \eqref{eq:SPDEsemiratefun} on $C([0,T];H_{n})$; see Theorem \ref{thm:semisolutionLDP}.
Concerning the error analysis of rate functions $I_{0,T}^{n,y}$ and $I_{0,T}^{x}$ (see \eqref{eq:I0TxzI0Txz}), we adopt the definition of weakly asymptotical preservation for LDP (see Definition \ref{def:semiweakasympres})
to deal with the problem caused by different effective domains of rate functions. Noticing the fact that the rate function $I_{0,T}^{x}$ is given by a minimization problem, we utilize the $\dot{H}^{2}$ spatial regularity of skeleton equation \eqref{eq:skeletoneqofSPDE} to simplify its expression. This is an important step in our proof of weakly asymptotical preservation for the LDP of sample paths of the original equations; see Theorem \ref{thm:spatialsolutionwap}.
Based on the uniform LDP of sample paths of \eqref{eq:SPDEsemiintro},
we show that the family of invariant measures
$\{\mu^{\varepsilon,n}\}_{\varepsilon > 0}$ of \eqref{eq:SPDEsemiintro} also satisfies an LDP with the good rate function $V^{n}$ on $H_{n}$.
By approximating both rate functions through some minimization sequences,
and using the error estimate between $I_{0,T}^{n,y}$ and $I_{0,T}^{x}$,
we prove that \eqref{eq:SPDEsemiintro} can weakly asymptotically preserve the LDP of invariant measures for the original equation \eqref{eq:SPDE} in the linear case; see Theorem \ref{th:semiinvameaLDPasympre}.

Let $\tau > 0$ be a uniform time stepsize and $t_{m} := m\tau, m \in \N$ the grid points. We further discretize \eqref{eq:SPDEsemiintro} in time by the accelerated exponential Euler method
\begin{equation}\label{eq:EE}
      Y_{y,m+1}^{\varepsilon,n}
      =
      E_{n}(\tau)Y_{y,m}^{\varepsilon,n}
      +
      \int_{t_{m}}^{t_{m+1}}  E_{n}(t_{m+1}-s)F_{n}(Y_{y,m}^{\varepsilon,n}) \diff{s}
      +
      \varepsilon \int_{t_{m}}^{t_{m+1}}  E_{n}(t_{m+1}-s) Q_{n}^{\frac12}\diff{W_{n}(s)},
\end{equation}
where $Y_{y,0}^{\varepsilon,n} = y$, and $\{E_{n}(t) := e^{tA_{n}}\}_{t \geq 0}$ is a $C_{0}$-semigroup generated by $A_{n}:=P_{n}A$.
To establish the sample path large deviation,
we define $\{Y_{y}^{\varepsilon,n}(t)\}_{t \geq 0}$ as the continuous time approximation of $\{Y_{y,m}^{\varepsilon,n}\}_{m \in \N}$ by
\begin{equation}\label{eq:saptiotemporalconteqintro}
\begin{split}
      &\diff{Y_{y}^{\varepsilon,n}(t)}
      =
      \big(A_{n}Y_{y}^{\varepsilon,n}(t)
      +
      F_{n}(Y_{y}^{\varepsilon,n}(\tau \lfloor t/\tau \rfloor)) \big)\diff{t}
      +
      \varepsilon Q_{n}^{\frac12}\diff{W_{n}(t)},
      \quad t > 0,
      \quad Y_{y}^{\varepsilon,n}(0) = y.
\end{split}
\end{equation}
Following the procedures in Theorems \ref{thm:semisolutionLDP}
and \ref{thm:spatialsolutionwap}, we show that $\{Y_{y}^{\varepsilon,n}\}_{\varepsilon > 0}$
satisfies a uniform LDP on $C([0,T];H_{n})$
in Theorem \ref{th:fullLDPsamplepath}, and weakly asymptotically preserves the sample path large deviation of \eqref{eq:SPDE} in Theorem \ref{th:fullLDPasympre}, respectively. Further, the uniform boundedness of the solution for \eqref{eq:saptiotemporalconteqintro}
implies the existence of an invariant measure $\mu^{\varepsilon,n,\tau}$
for each $\varepsilon > 0$, based on the Krylov--Bogoliubov theorem.
Utilizing the uniform LDP of sample paths of \eqref{eq:saptiotemporalconteqintro},
we show that $\{\mu^{\varepsilon,n,\tau}\}_{\varepsilon > 0}$ satisfies an LDP, and weakly asymptotically preserves the LDP of invariant measures for the original equation \eqref{eq:SPDE} in the linear case; see Theorem \ref{fullinvameaLDPasypre}.

The rest of this paper is organized as follows. In the next section, we  give some assumptions on the considered SPDEs and preliminaries on the theory of large deviations. In Section \ref{sec:spatial}, we apply the spectral Galerkin method to \eqref{eq:SPDE} and establish its LDPs of sample paths and invariant measures. Then the weakly asymptotical preservations for the LDPs of the original SPDEs are given. Section \ref{sec:temporal} focuses on the proof of the LDPs and weakly asymptotical preservations of the fully discrete method.

\section{Preliminaries}\label{sec:preli}

We begin with some notations. Let $C$ be a generic constant that may vary from one place to another, and set $H := L^{2}((0,1);\R)$, equipped with usual product $\langle \cdot,\cdot \rangle$ and norm $|\cdot|$. We denote by $(\mathcal{L}(H),|\cdot|_{\mathcal{L}(H)})$ the Banach space of all bounded linear operators from $H$ to $H$. Besides, let $(\mathcal{L}_{2}(H),\langle\cdot,
\cdot\rangle_{\mathcal{L}_{2}(H)},|\cdot|_{\mathcal{L}_{2}(H)})$ be the real separable Hilbert space  of all Hilbert--Schmidt operators from $H$ to $H$. Throughout this paper, let  $A \colon \mathcal{D}(A) \subset H \to H$ be the Laplacian operator with homogeneous Dirichlet boundary conditions, i.e.,  $Au = \Delta u, u \in \mathcal{D}(A) = H_{0}^{1} \cap H^{2}$.  Then $A$ generates a $C_{0}$-semigroup $\{E(t) := e^{tA}\}_{t \geq 0}$ on $H$ and $-Ae_i=\lambda_{i}e_i, i \in \N^{+}$ with $\{\lambda_{i} := \pi^2i^2\}_{i \in \N^{+}}$ and $\{e_{i}(\xi) := \sqrt{2}\sin(i \pi \xi),\xi \in (0,1)\}_{i \in \N^{+}}$ being an orthonormal basis of $H$. Further, one can define the fractional powers $(-A)^{\gamma}$ of $-A$ and the Hilbert space
     $
         \dot{H}^{\gamma}
     :=
         \mathcal{D}((-A)^{\frac{\gamma}{2}})
     $
for every $\gamma \in \R$, equipped with the inner product
     $
         \langle u,v \rangle_{\dot{H}^{\gamma}}
     =
         \langle
             (-A)^{\frac{\gamma}{2}}u
             ,
             (-A)^{\frac{\gamma}{2}}v
         \rangle
     =
         \sum_{i=1}^{\infty}
         \lambda_{i}^{\gamma}
         \langle u,e_{i} \rangle
         \langle v,e_{i} \rangle
     $
     and the corresponding norm
     $
         |u|_{\dot{H}^{\gamma}}
     =
         \sqrt{\langle u,u \rangle_{\dot{H}^{\gamma}}},
     u,v \in \dot{H}^{\gamma}$; see \cite[Appendix~B.2]{kruse2014strong}.

Assume that the  cylindrical Wiener process $\{W(t)\}_{t \geq 0}$ admits a formal series
$$
W(t) = \sum_{i=1}^{\infty}\beta_{i}(t) e_{i},
\quad t \geq 0,
$$
where $\{\beta_{i}(t)\}_{i \in \N^{+}}, t \geq 0$ is a sequence of independent real-valued standard Brownian motions.
Moreover, assume that $Q \colon H \to H$ is a self-adjoint, positive definite bounded linear operator, and satisfies $Qe_i=q_ie_i$, $i \in \N^{+}$ for a sequence of positive numbers $\{q_i\}_{i \in \N^{+}}$. Denote by $Q^{-\frac{1}{2}} \colon Q^{\frac{1}{2}}(H) \to H$ the inverse of $Q^{\frac{1}{2}}$. To proceed, we make the following assumptions.

\begin{Assumption}\label{ass:AQ}
      Assume that  $\mathcal{D}(A) \subset \mathcal{D}(Q^{-\frac{1}{2}})$ and
      \begin{equation}\label{eq:hybridAQ}
            |(-A)^{\frac{1}{2}}Q^{\frac{1}{2}}|_{\mathcal{L}_{2}(H)}
            <
            \infty,
            \quad
            |Q^{-\frac{1}{2}}(-A)^{-1}|_{\mathcal{L}(H)}
            <
            \infty.
      \end{equation}
\end{Assumption}

\begin{Assumption}\label{ass:F}
      Let $F \colon H \to H$ be Fr\'{e}chet differentiable and there exist $L_{F}, L > 0$ such that
      \begin{align}
      \label{eq:globalLF}
            |F(u)-F(v)|
            \leq&~
            L_{F}|u-v|,
            \quad u,v \in H,
      \\
      \label{eq:auxiliaryassumptionFuH1}
            |F(u)|_{\dot{H}^{1}}
            \leq&~
            L(1+|u|_{\dot{H}^{1}}),
            \quad u \in \dot{H}^{1},
      \\
      \label{eq:auxiliaryassumption}
            |F'(u)v|_{\dot{H}^{2}}
            \leq&~
            L\big(1+|u|_{\dot{H}^{2}}^{2}\big)|v|_{\dot{H}^{2}},
            \quad u,v \in \dot{H}^{2}.
      \end{align}
\end{Assumption}

      Concerning Assumptions \ref{ass:AQ} and \ref{ass:F}, for example,  one may take $q_{i} = \lambda_{i}^{-\delta},i \in \N^{+}$
      for some $\delta \in (\frac{3}{2},2]$
      and $F \colon H \to H$ a Nemytskij operator defined by
      $
            F(u)(\xi) :=  f(u(\xi)),\xi \in (0,1), u \in H     $
      with $f \in C_{b}^{3}(\R)$.

From \cite[Theorem 7.2]{da2014stochastic}, we know that \eqref{eq:SPDE} admits a unique mild solution.
\begin{Proposition}
      Suppose that Assumptions \ref{ass:AQ} and \ref{ass:F} hold. Then \eqref{eq:SPDE} admits a unique mild solution $\{X_{x}^{\varepsilon}(t)\}_{t \geq 0}$ given by
      \begin{equation}\label{eq:SPDEmildsolution}
        X_{x}^{\varepsilon}(t)
        =
        E(t)x
        +
        \int_{0}^{t} E(t-s) F(X_{x}^{\varepsilon}(s)) \diff{s}
        +
        \varepsilon \int_{0}^{t} E(t-s)Q^{\frac12} \diff{W(s)},
        \quad t \geq 0,\quad\P\text{-a.s.}
        \end{equation}
      Moreover, it belongs to $L^{p}(\Omega;C([0,T];H))$ for any $p \geq 1$ and $T > 0$.
\end{Proposition}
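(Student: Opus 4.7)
The plan is the classical mild-solution argument: produce the stochastic convolution separately via factorization, then obtain the full solution by a Picard/contraction iteration on an $L^p(\Omega;C([0,T];H))$-type space using the global Lipschitz hypothesis \eqref{eq:globalLF}.

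First I would analyse the stochastic convolution
\[
W_{A}(t) := \int_{0}^{t} E(t-s)\, Q^{\frac12}\, \mathrm{d}W(s), \qquad t\in[0,T].
\]
Under Assumption~\ref{ass:AQ}, $|(-A)^{\frac12}Q^{\frac12}|_{\mathcal{L}_{2}(H)}<\infty$ gives $\sum_{i}\lambda_{i}q_{i}<\infty$, so $\int_{0}^{T}s^{-2\alpha}|E(s)Q^{\frac12}|_{\mathcal{L}_{2}(H)}^{2}\,\mathrm{d}s<\infty$ for every $\alpha\in(0,\frac12)$. Fixing $\alpha\in(\frac1p,\frac12)$ and invoking the factorization identity
\[
W_{A}(t) = \frac{\sin(\pi\alpha)}{\pi}\int_{0}^{t}(t-s)^{\alpha-1}E(t-s)\,Y_{\alpha}(s)\,\mathrm{d}s,\qquad Y_{\alpha}(s):=\int_{0}^{s}(s-r)^{-\alpha}E(s-r)Q^{\frac12}\,\mathrm{d}W(r),
\]
I would apply the Burkholder--Davis--Gundy inequality to show $\mathbb{E}\int_{0}^{T}|Y_{\alpha}(s)|^{p}\,\mathrm{d}s<\infty$, and then a Hölder/Young estimate against the deterministic kernel $(t-s)^{\alpha-1}$ to deduce that $W_{A}$ admits a continuous modification with $\mathbb{E}\sup_{t\in[0,T]}|W_{A}(t)|^{p}<\infty$.

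Next I would set up Picard iteration on the Banach space
\[
\mathcal{X}_{T}^{p} := \bigl\{X\colon[0,T]\times\Omega\to H~\{\mathcal{F}_{t}\}\text{-adapted, continuous with } \|X\|_{\mathcal{X}_{T}^{p}}<\infty\bigr\}, \quad \|X\|_{\mathcal{X}_{T}^{p}}:=\bigl(\mathbb{E}\sup_{t\in[0,T]}|X(t)|^{p}\bigr)^{\frac1p},
\]
and study the map
\[
\Phi(X)(t) := E(t)x + \int_{0}^{t}E(t-s)F(X(s))\,\mathrm{d}s + \varepsilon W_{A}(t).
\]
Since $\{E(t)\}_{t\geq 0}$ is a contraction semigroup on $H$, the Lipschitz estimate \eqref{eq:globalLF} yields
\[
\sup_{s\in[0,t]}|\Phi(X_{1})(s)-\Phi(X_{2})(s)| \leq L_{F}\int_{0}^{t}\sup_{r\in[0,s]}|X_{1}(r)-X_{2}(r)|\,\mathrm{d}s,
\]
so iterating gives $\|\Phi^{n}(X_{1})-\Phi^{n}(X_{2})\|_{\mathcal{X}_{T}^{p}}\leq\frac{(L_{F}T)^{n}}{n!}\|X_{1}-X_{2}\|_{\mathcal{X}_{T}^{p}}$, and hence $\Phi^{n}$ is a strict contraction for $n$ large enough. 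The unique fixed point is the desired mild solution satisfying \eqref{eq:SPDEmildsolution}. The $L^{p}(\Omega;C([0,T];H))$ bound then follows by applying the same Lipschitz estimate together with $|F(0)|<\infty$ (from \eqref{eq:globalLF}) and the bound on $W_{A}$, and closing the loop via Gronwall's inequality on $t\mapsto\mathbb{E}\sup_{s\in[0,t]}|X_{x}^{\varepsilon}(s)|^{p}$.

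The main obstacle is the pathwise regularity of the stochastic convolution $W_{A}$: once continuity and the $L^{p}$ supremum bound for $W_{A}$ are in place, the rest reduces to a deterministic-style Picard argument that is essentially routine under the global Lipschitz assumption on $F$. Assumption~\ref{ass:AQ} is exactly the smoothing/compatibility hypothesis that powers the factorization step, and no use needs to be made of the stronger bounds \eqref{eq:auxiliaryassumptionFuH1}--\eqref{eq:auxiliaryassumption} at this stage.
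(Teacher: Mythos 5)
Your argument is correct and is essentially the standard proof of the result the paper invokes: the paper itself gives no proof but cites \cite[Theorem 7.2]{da2014stochastic}, and that theorem is established precisely via the factorization method for the stochastic convolution combined with a Banach fixed-point (Picard) argument in $L^p(\Omega;C([0,T];H))$ under global Lipschitz drift. Your reconstruction matches that route; the only minor point worth flagging is that picking $\alpha\in(\tfrac1p,\tfrac12)$ presupposes $p>2$, with the case $p\le 2$ then following by H\"older's inequality from the higher-moment bound, and that the iterated estimate $\|\Phi^{n}(X_1)-\Phi^{n}(X_2)\|_{\mathcal{X}_T^p}\le\frac{(L_FT)^n}{n!}\|X_1-X_2\|_{\mathcal{X}_T^p}$ is obtained by induction on the pathwise Gronwall-type bound before taking $L^p$ norms.
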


To formulate the LDPs for the solution of \eqref{eq:SPDE}, we introduce some preliminaries upon the theory of large deviations; see \cite[Chapter 1]{dembo2009large}. In what follows, let $(\mathcal{U},\rho^{\mathcal{U}})$ be a Polish space and $\mathcal{B}(\mathcal{U})$ its Borel $\sigma$-field.

\begin{Definition}
      A rate function $I$ is a lower semi-continuous mapping $I \colon \mathcal{U} \to [0,+\infty]$ (such that for all $\alpha \geq 0$, the level set $K_{I}(\alpha) := \{u \in \mathcal{U} : I(u) \leq \alpha\}$ is a closed subset of $\mathcal{U}$). A good rate function is a rate function for which all the level sets $K_{I}(\alpha)$ are compact subsets of $\mathcal{U}$. 
\end{Definition}

\begin{Definition}
\label{def:generaldefLDP}
      Let $\{\mu^{\varepsilon}\}_{\varepsilon > 0}$ be a family of probability measures on $(\mathcal{U},\mathcal{B}(\mathcal{U}))$. We say that $\{\mu^{\varepsilon}\}_{\varepsilon > 0}$ satisfies an LDP on $\mathcal{U}$ with the rate $\frac{1}{\varepsilon}$ and the rate function $I$ if
      \begin{equation}\label{eq:LDPclassical}
            -\inf_{u \in U^{o}} I(u)
            \leq
            \liminf_{\varepsilon \to 0} \varepsilon \log \mu^{\varepsilon}(U)
            \leq
            \limsup_{\varepsilon \to 0} \varepsilon \log \mu^{\varepsilon}(U)
            \leq
            -\inf_{u \in \overline{U}} I(u),
            \quad U \in \mathcal{B}(\mathcal{U}),
      \end{equation}
      where $U^{o}$ denotes the interior of $U$,
      and $\overline{U}$ the closure of $U$.
\end{Definition}

The following proposition about Freidlin--Wentzell exponential estimates gives an equivalent characterization of LDP; see \cite[Chapter 3]{freidlin1984random}.

\begin{Proposition}
      If $(\mathcal{U},\rho^{\mathcal{U}})$ is a Polish space and $\{\mu^{\varepsilon}\}_{\varepsilon > 0}$ is a family of probability measures on $(\mathcal{U},\mathcal{B}(\mathcal{U}))$, then $\{\mu^{\varepsilon}\}_{\varepsilon > 0}$ satisfying an LDP on $\mathcal{U}$ with the rate $\frac{1}{\varepsilon}$ and the good rate function $I$ is equivalent to the following statements:
      \begin{enumerate}
        \item [(i)] compact level set: the level set $K_{I}(\alpha) :=  \{u \in \mathcal{U} : I(u) \leq \alpha\}$ is compact for every $\alpha \geq 0$;

        \item [(ii)] lower bound: for any $u \in \mathcal{U}$, $\delta > 0$ and $\gamma > 0$, there exists $\varepsilon_{0} > 0$ such that
            $$\mu^{\varepsilon}(\{v \in \mathcal{U}:\rho^{\mathcal{U}}(u,v) < \delta\})
            \geq \exp\Big(-\frac{I(u)+\gamma}{\varepsilon}\Big),
            \quad \varepsilon \leq \varepsilon_{0};$$

        \item [(iii)] upper bound: for any $\alpha > 0$, $\delta > 0$ and $\gamma > 0$, there exists $\varepsilon_{0} > 0$ such that
            $$\mu^{\varepsilon}(\{v \in \mathcal{U}:
              \rho^{\mathcal{U}}(v,K_{I}(\alpha)) \geq \delta\})
              \leq \exp\Big(-\frac{\alpha-\gamma}{\varepsilon}\Big),
              \quad \varepsilon \leq \varepsilon_{0}.$$
      \end{enumerate}
\end{Proposition}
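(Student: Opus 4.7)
The plan is to prove both directions of the equivalence. The easier direction is to deduce the three Freidlin--Wentzell conditions from the LDP in Definition \ref{def:generaldefLDP} together with the goodness of $I$. Condition (i) is just the definition of a good rate function. For (ii), I would apply the LDP lower bound to the open ball $B(u,\delta) := \{v \in \mathcal{U}: \rho^{\mathcal{U}}(u,v) < \delta\}$ to obtain
\begin{equation*}
      \liminf_{\varepsilon \to 0} \varepsilon \log \mu^{\varepsilon}(B(u,\delta))
      \geq
      -\inf_{v \in B(u,\delta)} I(v)
      \geq
      -I(u),
\end{equation*}
which is equivalent to the bound in (ii) upon allowing an arbitrarily small loss $\gamma$. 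For (iii), observe that the set $C_{\alpha,\delta} := \{v \in \mathcal{U} : \rho^{\mathcal{U}}(v,K_{I}(\alpha)) \geq \delta\}$ is closed and disjoint from $K_{I}(\alpha)$, so $\inf_{v \in C_{\alpha,\delta}} I(v) \geq \alpha$; the LDP upper bound then yields (iii), again with the loss $\gamma$.

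For the converse direction, the LDP lower bound on an open set $U$ follows by selecting, for each $u \in U$, a radius $\delta > 0$ with $B(u,\delta) \subset U$ and applying (ii):
\begin{equation*}
      \varepsilon \log \mu^{\varepsilon}(U)
      \geq
      \varepsilon \log \mu^{\varepsilon}(B(u,\delta))
      \geq
      -(I(u)+\gamma)
      \quad \text{for all } \varepsilon \leq \varepsilon_{0}(u,\gamma);
\end{equation*}
letting $\gamma \to 0$ and taking the infimum over $u \in U$ delivers the open-set lower bound. For the upper bound on a closed set $C$, set $\alpha_{0} := \inf_{u \in C} I(u)$ (assuming $\alpha_{0} > 0$, otherwise trivial) and fix $\alpha \in (0,\alpha_{0})$; since $C \cap K_{I}(\alpha) = \emptyset$, the key step is to show $\mathrm{dist}(C, K_{I}(\alpha)) > 0$, so that $C$ is entirely contained in $\{v : \rho^{\mathcal{U}}(v,K_{I}(\alpha)) \geq \delta\}$ for some $\delta > 0$, and then (iii) gives $\varepsilon \log \mu^{\varepsilon}(C) \leq -(\alpha - \gamma)$ for small $\varepsilon$; letting $\gamma \to 0$ and $\alpha \uparrow \alpha_{0}$ concludes.

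The main obstacle, and the reason the goodness of $I$ is essential, is precisely this positive-distance step. I would argue by contradiction: if $\mathrm{dist}(C,K_{I}(\alpha)) = 0$, pick $v_{m} \in C$ and $k_{m} \in K_{I}(\alpha)$ with $\rho^{\mathcal{U}}(v_{m},k_{m}) \to 0$; by compactness of $K_{I}(\alpha)$ (condition (i)), a subsequence $k_{m_{j}}$ converges to some $k \in K_{I}(\alpha)$, hence $v_{m_{j}} \to k$, and closedness of $C$ forces $k \in C \cap K_{I}(\alpha)$, a contradiction. This is where the Polish (in particular metric) structure and the good-rate-function assumption both enter; without compact level sets one could only derive a weak LDP from (ii)--(iii), which is insufficient for the closed-set upper bound on unbounded closed sets. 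Once this positive-distance lemma is in hand, the rest is routine bookkeeping of the $\gamma,\delta,\alpha$ parameters.
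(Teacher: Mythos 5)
The paper gives no proof of this proposition; it simply cites Freidlin--Wentzell (Chapter~3 of \cite{freidlin1984random}) for the equivalence. Your argument is correct and is the standard one found there.

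A few small observations that do not affect correctness but are worth being explicit about. First, in the implication $(\text{i})$--$(\text{iii}) \Rightarrow$ LDP, condition (i) already delivers that $I$ is lower semi-continuous (compact, hence closed, level sets) and good, which is needed before the LDP statement is even meaningful. Second, in the closed-set upper bound you should also dispose of the degenerate cases: if $C = \emptyset$ the inequality is vacuous, and if $\inf_{C} I = +\infty$ the argument still works since $C \cap K_{I}(\alpha) = \emptyset$ for every $\alpha$, so letting $\alpha \to \infty$ gives $\limsup_{\varepsilon} \varepsilon \log \mu^{\varepsilon}(C) = -\infty$. Third, your positive-distance lemma is exactly the place where goodness of $I$ is used (closedness of level sets would not suffice on a non-locally-compact Polish space), and your contradiction argument via a convergent subsequence in the compact set $K_{I}(\alpha)$ is the right way to see it. Finally, the final limit-passage is legitimate because for each fixed $\alpha < \alpha_{0}$ and $\gamma > 0$ one first obtains $\limsup_{\varepsilon} \varepsilon \log \mu^{\varepsilon}(C) \leq -(\alpha - \gamma)$, then lets $\gamma \downarrow 0$ (the left side is independent of $\gamma$), then $\alpha \uparrow \alpha_{0}$; the order is forced because the threshold $\varepsilon_{0}$ in (iii) depends on both $\alpha$ and $\gamma$.
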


Regarding $X_{x}^{\varepsilon}$ as a $C([0,T];H)$-value random variable for each $\varepsilon > 0$, the uniform LDP of sample paths of $\{X_{x}^{\varepsilon}\}_{\varepsilon > 0}$ can be summarized as follows; see \cite{peszat1994large} for more details.
\begin{Theorem}
\label{th:exactsoluLDP}
      Suppose that Assumptions \ref{ass:AQ} and \ref{ass:F} hold. Then $\{X_{x}^{\varepsilon}\}_{\varepsilon > 0}$ satisfies a uniform LDP on $C([0,T];H)$ with the rate $\frac{1}{\varepsilon^{2}}$ and the good rate function $I_{0,T}^{x}$ defined by
      \begin{equation}\label{eq:I0TxzI0Txz}
            I_{0,T}^{x}(z)
            =
            \frac{1}{2}
            \inf\{|\varphi|_{L^2(0,T;H)}^{2}:
            \varphi \in L^2(0,T;H),
            z_{0,x}^{\varphi} = z\},
            \quad z \in C([0,T];H),
      \end{equation}
      where $z_{0,x}^{\varphi}$ is the mild solution of the skeleton equation 
      \begin{equation}\label{eq:skeletoneqofSPDE}
            \frac{\dif{z_{0,x}^{\varphi}(t)}}{\dif{t}}
            =
            Az_{0,x}^{\varphi}(t)
            + F(z_{0,x}^{\varphi}(t))
            + Q^{\frac12}\varphi(t),
            \quad t \in (0,T],
            \quad z_{0,x}^{\varphi}(0) = x.
      \end{equation}
\end{Theorem}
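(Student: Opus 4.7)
The plan is to follow a classical contraction-principle approach, in the spirit of Freidlin--Wentzell and of \cite{peszat1994large}. First I would isolate the linear stochastic convolution
$$
Z^{\varepsilon}(t) := \varepsilon \int_{0}^{t} E(t-s) Q^{\frac{1}{2}} \diff{W(s)}, \quad t \in [0,T],
$$
and establish a Schilder-type LDP for the Gaussian family $\{Z^{\varepsilon}\}_{\varepsilon > 0}$ on $C([0,T];H)$. Under the first part of Assumption \ref{ass:AQ}, i.e., $|(-A)^{\frac{1}{2}}Q^{\frac{1}{2}}|_{\mathcal{L}_{2}(H)} < \infty$, the factorization method combined with Kolmogorov's continuity criterion yields that $Z^{1}$ has $H$-continuous trajectories and defines a centered Gaussian measure on $C([0,T];H)$, whose reproducing kernel Hilbert space consists precisely of paths of the form $\int_{0}^{\cdot} E(\cdot - s) Q^{\frac{1}{2}}\varphi(s)\diff{s}$ with $\varphi \in L^{2}(0,T;H)$. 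The abstract Schilder theorem then delivers an LDP for $\{Z^{\varepsilon}\}$ with good rate function $J(w)=\frac{1}{2}\inf\{|\varphi|_{L^{2}(0,T;H)}^{2}: w(t)=\int_{0}^{t}E(t-s)Q^{\frac{1}{2}}\varphi(s)\diff{s}\}$.

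Second, I would introduce the deterministic It\^o map $\Phi_{x}\colon C([0,T];H) \to C([0,T];H)$ sending $w$ to the unique solution $z$ of $z(t)=E(t)x+\int_{0}^{t}E(t-s)F(z(s))\diff{s}+w(t)$. Existence and uniqueness come from a Picard argument using the global Lipschitz bound \eqref{eq:globalLF}, and the same bound combined with Gronwall's inequality shows that $\Phi_{x}$ is Lipschitz continuous with respect to the supremum norm, with constant depending only on $L_{F}$ and $T$. Since pathwise $X_{x}^{\varepsilon}=\Phi_{x}(Z^{\varepsilon})$ $\mathbb{P}$-a.s., the contraction principle transfers the LDP from $Z^{\varepsilon}$ to $X_{x}^{\varepsilon}$ with rate function $I_{0,T}^{x}(z)=\inf\{J(w): \Phi_{x}(w)=z\}$. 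Reparametrizing the infimum over $w$ as an infimum over the control $\varphi$ driving the skeleton equation \eqref{eq:skeletoneqofSPDE} yields exactly the representation \eqref{eq:I0TxzI0Txz}. Goodness of $I_{0,T}^{x}$ reduces to the relative compactness in $C([0,T];H)$ of the image of sublevel sets $\{|\varphi|_{L^{2}}^{2}\leq 2\alpha\}$ under the skeleton map, which follows from the smoothing of $\{E(t)\}_{t\geq 0}$ and an Arzel\`a--Ascoli argument.

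Third, to upgrade to a \emph{uniform} LDP in the initial datum $x$ on bounded subsets of $H$, I would verify the three Freidlin--Wentzell conditions (compact sublevels, lower and upper exponential bounds) of the equivalent characterization stated above, with constants that are uniform for $x$ ranging over a bounded set. The key observation is that the Gronwall estimate for $\Phi_{x}$ yields joint Lipschitz dependence in $(x,w)$, so the same exponential tilting computation based on Girsanov's theorem for the Wiener process produces lower and upper bounds whose constants depend on $x$ only through $|x|$. This delivers the uniformity built into the definition of a uniform LDP.

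The main obstacle I would expect is the interplay between the infinite-dimensional nature of the noise and the path topology $C([0,T];H)$: both the Gaussian LDP for $Z^{\varepsilon}$ and the compactness of level sets of $I_{0,T}^{x}$ rely on the Hilbert--Schmidt condition $|(-A)^{\frac{1}{2}}Q^{\frac{1}{2}}|_{\mathcal{L}_{2}(H)}<\infty$ being precisely strong enough to guarantee a joint space-time H\"older regularity of the stochastic convolution and a uniform modulus-of-continuity estimate under control $\varphi$. The careful bookkeeping of this regularity through the factorization representation of $Z^{\varepsilon}$, as carried out in \cite{peszat1994large}, is the technically most delicate step; once it is in place, the contraction-principle and Gronwall-type arguments above are essentially routine.
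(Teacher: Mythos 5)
Your approach is correct, but it takes a genuinely different route from the one the paper references. The paper does not prove Theorem~\ref{th:exactsoluLDP} itself; it cites Peszat \cite{peszat1994large}, whose argument is an Azencott/Freidlin--Wentzell approximation scheme tailored to stochastic evolution equations with possibly \emph{multiplicative} noise: one proves an exponential estimate showing that the solution tracks the skeleton trajectory with overwhelming probability on the exponential scale whenever the driving noise tracks the control, and then assembles the LDP bounds from these estimates together with exponential tightness. That machinery is unavoidable in Peszat's generality because the It\^o map ceases to be continuous once the diffusion coefficient depends on the state. Your contraction-principle route exploits the additivity of the noise in \eqref{eq:SPDE}: the map $\Phi_x$ is globally Lipschitz by Gronwall and \eqref{eq:globalLF}, the stochastic convolution is Gaussian with Cameron--Martin space $\{ \int_0^\cdot E(\cdot-s)Q^{1/2}\varphi(s)\,\mathrm{d}s : \varphi\in L^2(0,T;H)\}$, and the abstract Schilder theorem plus the contraction principle yield the LDP with exactly the representation \eqref{eq:I0TxzI0Txz} after reparametrizing the infimum over $w$ as an infimum over $\varphi$. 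This is conceptually simpler than Peszat's argument when it applies, which it does here, so your route is a perfectly legitimate alternative and arguably more self-contained.

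Two small remarks to tighten the sketch. First, the goodness of $I_{0,T}^x$ is automatic from the contraction principle (goodness of the Gaussian rate function together with continuity of $\Phi_x$), so the separate Arzel\`a--Ascoli discussion, while not incorrect, is superfluous. Second, the uniformity over $\{|x|\le K\}$ should be made explicit rather than attributed to ``the same exponential tilting computation via Girsanov'': Girsanov only enters the Schilder LDP for $Z^\varepsilon$, and the transfer to $X_x^\varepsilon$ is a soft continuity argument. Concretely, the joint Lipschitz bound gives $\{|Z^\varepsilon-w|_{C([0,T];H)}<\delta/C_K\}\subset\{|X_x^\varepsilon-z|_{C([0,T];H)}<\delta\}$ for the lower bound, and, using that the level sets satisfy $K_{0,T}^x(\alpha)=\Phi_x(K_J(\alpha))$ (the infimum defining $I_{0,T}^x$ is attained because $K_J(\alpha)$ is compact), $\rho^{C([0,T];H)}(X_x^\varepsilon,K_{0,T}^x(\alpha))\ge\delta$ forces $\rho^{C([0,T];H)}(Z^\varepsilon,K_J(\alpha))\ge\delta/C_K$ for the upper bound, with $C_K$ depending on $x$ only through $K\ge|x|$. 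With these two points spelled out, the argument is complete.
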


To ensure the existence of invariant measures for \eqref{eq:SPDE}, we need the following assumption on dissipativity.

\begin{Assumption}
\label{ass:LFleqlambda1}
      Assume $F(0) = 0$ and $L_{F} < \lambda_{1}$, where $L_{F}$ is the Lipschitz constant of $F$ given by \eqref{eq:globalLF} and $\lambda_{1}$ is the smallest eigenvalue of $-A$.
\end{Assumption}

This assumption together with
$-Ae_{i} = \lambda_{i}e_{i}, i \in \N^{+}$
shows
\begin{equation}\label{eq:dissipativecond}
      \langle Au + F(u), u \rangle
      \leq -c|u|^{2},
      \quad u \in \dot{H}^{2}
\end{equation}
with $c:= \lambda_{1}-L_{F} > 0$.
It follows from \cite{cerrai2005largeinvariant} that there exists $\{t_{i}\}_{i \in \N^{+}} \uparrow +\infty$ (possibly depending on $\varepsilon$) such that the sequence of probability measures $\{\mu_{t_{i}}^{\varepsilon}\}_{i \in \N^{+}}$, defined by
      $$
            \mu_{t_{i}}^{\varepsilon}(B)
            :=
            \frac{1}{t_{i}} \int_{0}^{t_{i}}
            \P\big(X_{0}^{\varepsilon}(s) \in B\big)
            \diff{s},
            \quad B \in \mathcal{B}(H),
      $$
converges weakly to some probability measure $\mu^{\varepsilon}$ on $(H,\mathcal{B}(H))$, which is invariant for \eqref{eq:SPDE}. Moreover, the following theorem shows that $\{\mu^{\varepsilon}\}_{\varepsilon > 0}$ obeys an LDP on $H$; see, e.g., \cite{cerrai2005largeinvariant}.

\begin{Theorem}
      Suppose that Assumptions \ref{ass:AQ}, \ref{ass:F} and \ref{ass:LFleqlambda1} hold. Then  $\{\mu^{\varepsilon}\}_{\varepsilon > 0}$ satisfies an LDP on $H$ with the rate $\frac{1}{\varepsilon^{2}}$ and the good rate function $V$ defined by
      \begin{equation}\label{eq:invaLdpratefun}
            V(u)
            =
            \inf\{I_{0,T}^{0}(z):
            T>0, z \in C([0,T];H), z(0) = 0, z(T) = u\},
            \quad u \in H.
      \end{equation}
\end{Theorem}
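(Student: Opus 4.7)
I would follow the classical Freidlin--Wentzell program for invariant measures adapted to infinite dimensions (as in \cite{cerrai2005largeinvariant,sowers1992largeinvariant}), combining the uniform sample path LDP of Theorem~\ref{th:exactsoluLDP} with the invariance identity $\mu^{\varepsilon}=\mu^{\varepsilon}P_{T}^{\varepsilon}$ valid for every $T>0$. The argument splits into four ingredients: (i) exponential tightness of $\{\mu^{\varepsilon}\}_{\varepsilon>0}$; (ii) goodness of $V$; (iii) the LDP lower bound; (iv) the LDP upper bound.

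For (i), I would apply It\^o's formula to $|X_{0}^{\varepsilon}(t)|_{\dot{H}^{1}}^{2}$ and use dissipativity \eqref{eq:dissipativecond} together with Assumption~\ref{ass:AQ} to get a uniform-in-$t$ exponential moment bound
\[
\sup_{t\geq 0}\E \exp\Big(\frac{\kappa|X_{0}^{\varepsilon}(t)|_{\dot{H}^{1}}^{2}}{\varepsilon^{2}}\Big)\leq C
\]
for some $\kappa>0$. Averaging on $[0,t_{i}]$ and passing to the weak limit yields $\mu^{\varepsilon}(\{|u|_{\dot{H}^{1}}\geq R\})\leq Ce^{-\kappa R^{2}/\varepsilon^{2}}$; the compact embedding $\dot{H}^{1}\hookrightarrow H$ then gives exponential tightness: compact sets $K_{M}\subset H$ with $\mu^{\varepsilon}(K_{M}^{c})\leq e^{-M/\varepsilon^{2}}$ for small $\varepsilon$. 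Step (ii) is routine: sublevel sets of $V$ are relatively compact in $H$ by weak $L^{2}$-compactness of near-minimizing controls and goodness of $I_{0,T}^{0}$, and lower semicontinuity follows from the infimum structure.

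For the lower bound (iii), given $u\in H$ with $V(u)<\infty$ and $\gamma,\delta>0$, pick $T^{*}$ and a control $\varphi^{*}$ with $z_{0,0}^{\varphi^{*}}(T^{*})=u$ and $\tfrac{1}{2}|\varphi^{*}|_{L^{2}(0,T^{*};H)}^{2}<V(u)+\gamma/4$. By dissipativity, the deterministic flow drives any $x$ from a compact $K$ into $B(0,\delta')$ within some deterministic time $T_{0}$ at zero control cost. Concatenating the null control on $[0,T_{0}]$ with $\varphi^{*}$ on $[T_{0},T_{0}+T^{*}]$ and invoking continuous dependence of the skeleton on initial data yields, uniformly in $x\in K$, a control steering $x$ to $B(u,\delta)$ with total cost below $V(u)+\gamma/2$. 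The uniform lower bound in Theorem~\ref{th:exactsoluLDP} then gives $\inf_{x\in K}\P(X_{x}^{\varepsilon}(T_{0}+T^{*})\in B(u,\delta))\geq e^{-(V(u)+\gamma)/\varepsilon^{2}}$ for small $\varepsilon$; taking $K$ via (i) so that $\mu^{\varepsilon}(K)\geq 1/2$ and applying invariance closes the bound.

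For the upper bound (iv), fix a closed $C\subset H$ and $\gamma>0$, set $\alpha:=\inf_{u\in C}V(u)$, pick $M>\alpha+\gamma$ and the corresponding $K_{M}$ from (i). Using invariance at intermediate times $T_{1},T_{2}$ and the Markov property,
\[
\mu^{\varepsilon}(C)\leq e^{-M/\varepsilon^{2}}+\sup_{y\in K_{M}}\P\big(X_{y}^{\varepsilon}(T_{1})\notin B(0,\delta)\big)+\sup_{y'\in \overline{B(0,\delta)}}\P\big(X_{y'}^{\varepsilon}(T_{2})\in C\big).
\]
The dissipativity-driven deterministic flow sends $K_{M}$ into $B(0,\delta/2)$ by time $T_{1}$, so the uniform upper bound of Theorem~\ref{th:exactsoluLDP} makes the second term at most $e^{-(\eta-\gamma)/\varepsilon^{2}}$ with $\eta\to\infty$ as $T_{1}\to\infty$. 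For the third term I would establish the key estimate
\[
\liminf_{\delta\to 0}\inf\{I_{0,T_{2}}^{y'}(z):y'\in\overline{B(0,\delta)},\,z(T_{2})\in C\}\geq \alpha-\gamma/2,
\]
by prepending to any near-minimizing pair $(y',z)$ a short control $\varphi_{0}$ driving $0$ to $y'$ with $\tfrac{1}{2}|\varphi_{0}|_{L^{2}}^{2}\to 0$ as $|y'|\to 0$; such a control exists through the null-controllability implicit in the bounded invertibility $|Q^{-1/2}(-A)^{-1}|_{\mathcal{L}(H)}<\infty$ of Assumption~\ref{ass:AQ}, giving cost of order $|y'|^{2}$. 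Concatenation produces an admissible path from $0$ to $C$, whose cost dominates $V(z(T_{2}))\geq\alpha$, so $I_{0,T_{2}}^{y'}(z)\geq\alpha-o(1)$. Combining the three terms and sending $\gamma\to 0$ gives the upper bound. The main obstacle will be this null-controllability-with-quadratic-cost estimate uniformly over small targets, since the noise operator $Q^{1/2}$ is not invertible on $H$: one must invert it against $(-A)^{-1}y'$ and carefully track that the induced control has $L^{2}$-norm vanishing like $|y'|$, which is precisely what Assumption~\ref{ass:AQ} is designed to deliver.
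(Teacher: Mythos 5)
The paper does not prove this theorem itself; it is quoted from \cite{cerrai2005largeinvariant} (and the proof mechanism is reproduced in the paper only for the spectral Galerkin analogue in Subsection~3.3, via Lemmas~\ref{lem:zTzK0TnLalpha} and~\ref{lem:infI0barJnyz310} and Theorems~\ref{th:semiinvariantmeasurelowerbounded}, \ref{th:semiinvariantmeasureupperbounded}). Your ingredients (i)--(iii) are sound and track the Cerrai--R\"ockner program, but your step (iv) contains a genuine gap in the bound on the middle term of the decomposition. You claim that
\[
\sup_{y\in K_{M}}\P\bigl(X_{y}^{\varepsilon}(T_{1})\notin B(0,\delta)\bigr)\ \leq\ e^{-(\eta-\gamma)/\varepsilon^{2}}
\qquad\text{with } \eta\to\infty \text{ as } T_{1}\to\infty.
\]
This is false: the minimal action to exit $B(0,\delta)$ \emph{at the single time} $T_{1}$, starting from a bounded set, stays bounded as $T_{1}\to\infty$. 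Indeed, one may let the free (zero-control) flow carry $y$ to a neighbourhood of $0$ on $[0,T_{1}-1]$, which costs nothing, and then apply a fixed control on $[T_{1}-1,T_{1}]$ that pushes the state out to $|z(T_{1})|=\delta$; the total cost is a fixed constant depending on $\delta$ only, independent of $T_{1}$. Since the target $\alpha=\inf_{C}V$ may well exceed this constant, the middle term cannot be made negligible relative to $e^{-\alpha/\varepsilon^{2}}$ by choosing $T_{1}$ large, and your upper bound breaks down.

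The correct mechanism, which Cerrai--R\"ockner use and the paper mirrors in Theorem~\ref{th:semiinvariantmeasureupperbounded}, replaces your single-time confinement event by the multi-time event
$H_{K,L,J}=\{z:\ |z(0)|\le K,\ |z(j)|>L,\ j=1,\dots,J\}$: a trajectory that \emph{never} re-enters $\overline{B(0,L)}$ at the integer times $1,\dots,J$ accumulates action growing with $J$ (this is Lemma~\ref{lem:infI0barJnyz310}), so its probability can be driven below $e^{-\alpha/\varepsilon^{2}}$ by choosing $J$ large. On the complementary event one applies the Markov property at the first integer time of re-entry to $\overline{B(0,L)}$, and then uses a statement of the form of Lemma~\ref{lem:zTzK0TnLalpha} (endpoints of low-action paths from a bounded ball eventually land $\delta/2$-close to the level set $K(\alpha)$ of $V$) rather than a direct cheap-controllability prepending. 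Your null-controllability estimate, while flagged as delicate, does point at a real ingredient of such a lemma, but it replaces the wrong step: the genuine gap is the spurious $\eta\to\infty$ claim, which is precisely what the $H_{K,L,J}$ construction is designed to repair.
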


\section{Spatial discretization and its LDPs}
\label{sec:spatial}

Here we discretize \eqref{eq:SPDE} in space by the spectral Galerkin method and present its uniform LDP of sample paths in Subsection \ref{sec:SGM}.
Subsection \ref{subsec:semiweakasympre} obtains the weakly asymptotical preservation for the LDP of sample paths of the original equation. After establishing the LDP of invariant measures of the spatial discretization in Subsection \ref{subsec:semiinvameaLDPs}, we prove its weakly asymptotical preservation for the LDP of invariant measures of the original equation in Subsection \ref{sebsec:semiinvameaLDPasy}.

\subsection{Spectral Galerkin method}\label{sec:SGM}

For each $n \in \N^{+}$, we define $H_{n} := \text{span}\{e_{1},\ldots,e_{n}\} \subset H$,  the projection operator $P_{n} \colon H \to H_{n}$ by $P_{n}u = \sum_{i=1}^{n}\langle e_{i},u \rangle e_{i}$ for every $u \in H$, and
$$
      y :=  P_{n}x,
      \quad F_{n} :=  P_{n}F,
      \quad Q_{n}^{\frac12} :=  P_{n}Q^{\frac12},
      \quad W_{n} :=  P_{n}W.
$$
The spectral Galerkin method applied to \eqref{eq:SPDE} is given by
\begin{equation}\label{eq:SPDEsemi}
      \diff{X_{y}^{\varepsilon,n}(t)}
      =
      \big(A_{n}X_{y}^{\varepsilon,n}(t)
      +
      F_{n}(X_{y}^{\varepsilon,n}(t))\big)\diff{t}
      +
      \varepsilon Q_{n}^{\frac12}\diff{W_{n}(t)},
      \quad t > 0,
      \quad X_{y}^{\varepsilon,n}(0) = y,
\end{equation}
where $A_{n} \colon H_{n} \to H_{n}$ is defined by $A_{n} :=  P_{n}A$ and generates a $C_{0}$-semigroup $\{E_{n}(t) := e^{tA_{n}}\}_{t \geq 0}$ on $H_{n}$.
Under Assumptions \ref{ass:AQ} and \ref{ass:F}, Theorem 4.5.3 in \cite{kloeden1992numerical} ensures that \eqref{eq:SPDEsemi} admits a unique mild solution
\begin{equation}\label{eq:SPDEsemisolu}
      X_{y}^{\varepsilon,n}(t)
      =
      E_{n}(t)y
      +
      \int_{0}^{t} E_{n}(t-s) F_{n}(X_{y}^{\varepsilon,n}(s)) \diff{s}
      +
      \varepsilon
      \int_{0}^{t} E_{n}(t-s)Q_{n}^{\frac12} \diff{W_{n}(s)},
      \quad t \geq 0.
\end{equation}
Moreover, the solution process $\{X_{y}^{\varepsilon, n}(t)\}_{t \in [0,T]}$ belongs to $L^{p}(\Omega;C([0,T];H_{n}))$ for any $p \geq 1$ and $T > 0$. To show $\{X_{y}^{\varepsilon,n}\}_{\varepsilon > 0}$ satisfying an LDP on $C([0,T];H_{n})$, we first note that the skeleton equation corresponding to \eqref{eq:SPDEsemi} is given by
\begin{equation}\label{eq:SPDEsemiskeletoneq}
      \frac{\dif{z_{0,y}^{n,\psi}(t)}}{\dif{t}}
      =
      A_{n}z_{0,y}^{n,\psi}(t) + F_{n}(z_{0,y}^{n,\psi}(t)) + Q_{n}^{\frac12}\psi(t),
      \quad t \in (0,T],
      \quad z_{0,y}^{n,\psi}(0) = y
\end{equation}
with $\psi \in L^{2}(0,T;H_{n})$. According to
\cite[Theorem 1.1]{peszat1994large}, we know that \eqref{eq:SPDEsemiskeletoneq} admits a unique mild solution $z_{0,y}^{n,\psi} \in C([0,T];H_{n})$, given by
\begin{equation}\label{eq:skeletoneqspatial}
      z_{0,y}^{n,\psi}(t)
      =
      E_{n}(t)y
      +
      \int_{0}^{t} E_{n}(t-s)F_{n}(z_{0,y}^{n,\psi}(s)) \diff{s}
      +
      \int_{0}^{t} E_{n}(t-s)Q_{n}^{\frac12}\psi(s) \diff{s},
      \quad t \in [0,T].
\end{equation}
With this, we define $I_{0,T}^{n,y} \colon C([0,T];H_{n}) \to [0,+\infty]$ by
\begin{equation}\label{eq:SPDEsemiratefun}
      I_{0,T}^{n,y}(z)
      =
      \frac{1}{2}\inf\{|\psi|_{L^2(0,T;H)}^{2}:
      \psi \in L^2(0,T;H_{n}), z_{0,y}^{n,\psi} = z\},
      \quad z \in C([0,T];H_{n}).
\end{equation}

Following the approach in \cite{peszat1994large}, one can prove the uniform LDP of sample paths of $\{X_{y}^{\varepsilon,n}\}_{\varepsilon > 0}$.

\begin{Theorem}
\label{thm:semisolutionLDP}
      Suppose that Assumptions \ref{ass:AQ} and \ref{ass:F} hold. Then $\{X_{y}^{\varepsilon,n}\}_{\varepsilon > 0}$ satisfies a uniform LDP on $C([0,T];H_{n})$ with the rate $\frac{1}{\varepsilon^{2}}$ and the good rate function $I_{0,T}^{n,y}$ given by \eqref{eq:SPDEsemiratefun}, i.e.,

      \begin{enumerate}
        \item [(i)] compact level set: for any $T > 0$, $n \in \N^{+}$ and $y \in H_{n}$, the level set $K_{0,T}^{n,y}(\alpha) := \{z \in C([0,T];H_{n}): I_{0,T}^{n,y}(z) \leq \alpha\}$ is compact for every $\alpha \geq 0$;
        \item [(ii)] uniform lower bound: for any $T > 0$, $n \in \N^{+}$, $\alpha \geq 0$, $\delta > 0$, $\gamma > 0$ and $K > 0$, there exists $\varepsilon_0 > 0$ such that for any $y \in H_{n}$ with $|y| \leq K$ and $z \in K_{0,T}^{n,y}(\alpha)$,
      \begin{equation}\label{eq:semisoluldplower}
            \P(|X_{y}^{\varepsilon,n}-z|_{C([0,T];H)} < \delta)
            \geq
            \exp\Big(-\frac{I_{0,T}^{n,y}(z)+\gamma}{\varepsilon^2}\Big),
            \quad \varepsilon \leq \varepsilon_{0};
      \end{equation}
        \item [(iii)] uniform upper bound: for any $T > 0$, $n \in \N^{+}$, $\alpha \geq 0$, $\delta > 0$, $\gamma > 0$ and $K > 0$, there exists $\varepsilon_0 > 0$ such that for any $y \in H_{n}$ with $|y| \leq K$,
      \begin{equation}\label{eq:semisoluldpupper}
	        \P(\rho^{C([0,T];H)}
            (X_{y}^{\varepsilon,n},K_{0,T}^{n,y}(\alpha))
            \geq \delta)
	        \leq
	        \exp\Big(-\frac{\alpha-\gamma}{\varepsilon^2}\Big),
	        \quad \varepsilon \leq \varepsilon_{0},
      \end{equation}
      where $\rho^{C([0,T];H)}(z,U) := \inf\limits_{z' \in U}|z-z'|_{C([0,T];H)}, z \in C([0,T];H), U \subset C([0,T];H)$.
      \end{enumerate}
\end{Theorem}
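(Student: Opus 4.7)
The plan is to verify the three conditions (i)--(iii) in turn, following the general template for sample path large deviations for SPDEs developed in \cite{peszat1994large}. Because \eqref{eq:SPDEsemi} lives in the finite-dimensional subspace $H_{n}$, the infinite-dimensional compactness obstructions disappear; however, one still has to keep the estimates uniform over $y \in H_{n}$ with $|y| \leq K$, and the probabilistic bounds are stated in the ambient norm $|\cdot|_{C([0,T];H)}$, which I will treat as equivalent to $|\cdot|_{C([0,T];H_{n})}$ when restricted to $H_{n}$-valued paths.

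For (i), given $\alpha \geq 0$ and a sequence $\{z_{k}\} \subset K_{0,T}^{n,y}(\alpha)$, I would pick controls $\psi_{k} \in L^{2}(0,T;H_{n})$ with $z_{0,y}^{n,\psi_{k}} = z_{k}$ and $\tfrac{1}{2}|\psi_{k}|_{L^{2}(0,T;H)}^{2} \leq \alpha + 1/k$. The Banach--Alaoglu theorem yields a subsequence $\psi_{k_{j}} \rightharpoonup \psi$ weakly in $L^{2}(0,T;H_{n})$ with $\tfrac{1}{2}|\psi|_{L^{2}}^{2} \leq \alpha$. The crucial continuity step is that $\psi \mapsto z_{0,y}^{n,\psi}$ sends weak convergence on bounded sets of $L^{2}(0,T;H_{n})$ into strong convergence in $C([0,T];H_{n})$: the control enters \eqref{eq:skeletoneqspatial} through the compact linear map $\psi \mapsto \int_{0}^{\cdot} E_{n}(\cdot-s) Q_{n}^{1/2}\psi(s)\,\dif{s}$, and combined with the Lipschitz property \eqref{eq:globalLF} of $F_{n}$ and Gronwall's inequality this gives $z_{k_{j}} \to z_{0,y}^{n,\psi}$ in $C([0,T];H_{n})$. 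Hence the limit lies in $K_{0,T}^{n,y}(\alpha)$, and lower semi-continuity of $I_{0,T}^{n,y}$ follows as a routine by-product.

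For (ii) and (iii), the central ingredient is an exponential tail estimate for the stochastic convolution
\begin{equation*}
      Z^{\varepsilon,n}(t)
      :=
      \varepsilon \int_{0}^{t} E_{n}(t-s) Q_{n}^{\frac{1}{2}}\,\dif{W_{n}(s)}, \quad t \in [0,T].
\end{equation*}
Assumption \ref{ass:AQ}, in particular $|(-A)^{1/2}Q^{1/2}|_{\mathcal{L}_{2}(H)} < \infty$, yields moment bounds for $Z^{\varepsilon,n}$ in $C([0,T];H)$ that sharpen via a Fernique-type argument into $\P(|Z^{\varepsilon,n}|_{C([0,T];H)} \geq r) \leq 2\exp(-cr^{2}/\varepsilon^{2})$ with $c > 0$ independent of $n, \varepsilon, y$. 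For the lower bound (ii), given $z = z_{0,y}^{n,\psi}$ with $\tfrac{1}{2}|\psi|_{L^{2}}^{2} \leq I_{0,T}^{n,y}(z) + \gamma/2$, I would apply Girsanov's theorem to shift $W_{n}$ by $\varepsilon^{-1}\psi$: under the tilted measure $X_{y}^{\varepsilon,n} - z$ coincides with $Z^{\varepsilon,n}$ plus a Gronwall-controlled error from $F_{n}$, so $\{|X_{y}^{\varepsilon,n}-z|_{C([0,T];H)} < \delta\}$ has tilted probability close to $1$ for small $\varepsilon$, and the Radon--Nikodym density contributes the factor $\exp(-I_{0,T}^{n,y}(z)/\varepsilon^{2})$ together with the slack $\gamma/2$. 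For the upper bound (iii), a standard polygonal-approximation argument reduces matters to showing that on the event $\{|Z^{\varepsilon,n}|_{C([0,T];H)} < \delta'\}$ the solution $X_{y}^{\varepsilon,n}$ lies within $\delta$ of some element of $K_{0,T}^{n,y}(\alpha)$, with the complementary event having probability at most $\exp(-(\alpha-\gamma)/\varepsilon^{2})$.

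The main obstacle is keeping every constant uniform in $y \in H_{n}$ with $|y| \leq K$. This reduces to: (a) a Gronwall a priori estimate $|z_{0,y}^{n,\psi}|_{C([0,T];H)} \leq C(T,K,|\psi|_{L^{2}})$ for \eqref{eq:skeletoneqspatial}, which controls how the skeleton depends on the starting point; (b) a corresponding pathwise bound on $X_{y}^{\varepsilon,n} - E_{n}(\cdot)y - Z^{\varepsilon,n}$ that is uniform in $|y| \leq K$; and (c) the fact that the tail constant $c$ for $Z^{\varepsilon,n}$ is independent of $y$ because $Z^{\varepsilon,n}$ itself does not depend on $y$. Once these uniform bounds are assembled, the Girsanov and polygonal arguments go through with constants depending only on $\alpha, \delta, \gamma, K, T, n$, which is exactly the uniformity demanded by \eqref{eq:semisoluldplower} and \eqref{eq:semisoluldpupper}.
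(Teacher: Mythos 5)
The paper gives no explicit proof of Theorem~\ref{thm:semisolutionLDP}; it simply points to \cite{peszat1994large} and asserts that the same approach applies to the Galerkin truncation. Your proposal is a faithful sketch of exactly that Freidlin--Wentzell/Peszat scheme adapted to the finite-dimensional setting (weak $L^2$-compactness of controls together with continuity of the skeleton map for compact level sets, a Girsanov tilt for the uniform lower bound, exponential tail estimates on the stochastic convolution combined with a time-discretization argument for the uniform upper bound, with uniformity in $|y|\le K$ secured by the $y$-independence of the convolution and Gronwall a priori bounds), so you are in line with the paper.
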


\subsection{Weakly asymptotical preservation for LDP of $\{X_{x}^{\varepsilon}\}_{\varepsilon > 0}$}
\label{subsec:semiweakasympre}

This part aims to estimate the error between the rate functions $I_{0,T}^{n,y}$ and $I_{0,T}^{x}$. To this end, we first simplify the expression of $I_{0,T}^{x}$, which relies on the spatial regularity of the solution of the skeleton equation \eqref{eq:skeletoneqofSPDE}.

\begin{Lemma}
\label{lem:spatialregularity}
      Suppose that Assumptions \ref{ass:AQ} and \ref{ass:F} hold and let $\{z_{0,x}^{\varphi}(t)\}_{t \in [0,T]}$ with $\varphi \in L^{2}(0,T;H)$ be given by \eqref{eq:skeletoneqofSPDE}. If
      $x \in \dot{H}^{2}$, then $z_{0,x}^{\varphi}(t) \in \dot{H}^{2},t \in [0,T]$ and there exists $C > 0$ such that
      \begin{equation}\label{eq:38}
            |z_{0,x}^{\varphi}(t)|_{\dot{H}^{2}}
            \leq
            C(1+|x|_{\dot{H}^{2}}),
            \quad t \in [0,T].
      \end{equation}
\end{Lemma}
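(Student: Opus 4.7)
The plan is to start from the mild form
$z_{0,x}^{\varphi}(t) = E(t)x + \int_{0}^{t}E(t-s)F(z_{0,x}^{\varphi}(s))\diff{s} + v(t)$,
with $v(t) := \int_{0}^{t}E(t-s)Q^{\frac12}\varphi(s)\diff{s}$, and to bootstrap by applying $(-A)^{1/2}$ and then $(-A)$, using the analytic smoothing $|(-A)^{\gamma/2}E(t)|_{\mathcal{L}(H)} \leq Ct^{-\gamma/2}$ together with the fact that $A$, $E(t)$ and $Q$ are simultaneously diagonalized in the basis $\{e_{i}\}$.

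The main building block is a pointwise $\dot{H}^{\gamma}$ bound on $v(t)$ obtained by direct spectral computation. With $\varphi_{i}(s):=\langle\varphi(s),e_{i}\rangle$, the $i$-th Fourier coefficient of $v(t)$ is $\int_{0}^{t}e^{-\lambda_{i}(t-s)}\sqrt{q_{i}}\varphi_{i}(s)\diff{s}$, whose squared modulus is at most $\tfrac{q_{i}}{2\lambda_{i}}|\varphi_{i}|_{L^{2}(0,T)}^{2}$ by Cauchy--Schwarz in time. Hence, for $\gamma \in \{1,2\}$,
$|v(t)|_{\dot{H}^{\gamma}}^{2} = \sum_{i}\lambda_{i}^{\gamma}|\langle v(t),e_{i}\rangle|^{2} \leq \tfrac{1}{2}\bigl(\sup_{i}\lambda_{i}^{\gamma-1}q_{i}\bigr)|\varphi|_{L^{2}(0,T;H)}^{2}$,
and Assumption \ref{ass:AQ} gives $\sup_{i}\lambda_{i}q_{i} = |(-A)^{1/2}Q^{1/2}|_{\mathcal{L}(H)}^{2} \leq |(-A)^{1/2}Q^{1/2}|_{\mathcal{L}_{2}(H)}^{2} < \infty$, so $|v(t)|_{\dot{H}^{\gamma}} \leq C|\varphi|_{L^{2}(0,T;H)}$.

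With this in hand, the first bootstrap step applies $(-A)^{1/2}$ to the mild form: the homogeneous piece is bounded by $|x|_{\dot{H}^{1}} \leq \lambda_{1}^{-1/2}|x|_{\dot{H}^{2}}$, the $v$ piece by the above, and the drift, using that $(-A)^{1/2}$ commutes with the contractive semigroup $E(t-s)$ together with \eqref{eq:auxiliaryassumptionFuH1}, by
$\bigl|\int_{0}^{t}(-A)^{1/2}E(t-s)F(z_{0,x}^{\varphi}(s))\diff{s}\bigr| \leq \int_{0}^{t}|F(z_{0,x}^{\varphi}(s))|_{\dot{H}^{1}}\diff{s} \leq \int_{0}^{t}L\bigl(1+|z_{0,x}^{\varphi}(s)|_{\dot{H}^{1}}\bigr)\diff{s}$.
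Gronwall's inequality then yields $\sup_{t\in[0,T]}|z_{0,x}^{\varphi}(t)|_{\dot{H}^{1}}\leq C(1+|x|_{\dot{H}^{2}})$. The second step applies $(-A)$ and uses the decomposition $(-A)E(t-s) = (-A)^{1/2}E(t-s)(-A)^{1/2}$ to estimate the drift by $\int_{0}^{t}C(t-s)^{-1/2}L(1+|z_{0,x}^{\varphi}(s)|_{\dot{H}^{1}})\diff{s}\leq CT^{1/2}(1+\sup_{s}|z_{0,x}^{\varphi}(s)|_{\dot{H}^{1}})$; combined with $|E(t)x|_{\dot{H}^{2}}\leq|x|_{\dot{H}^{2}}$, the $v$ bound, and the Step 1 estimate, this gives $|z_{0,x}^{\varphi}(t)|_{\dot{H}^{2}} \leq C(1+|x|_{\dot{H}^{2}})$.

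The main obstacle is the $\dot{H}^{2}$ control of $v(t)$. A direct operator-norm estimate combined with Cauchy--Schwarz in time produces the non-integrable kernel $(t-s)^{-1}$, and shifting the derivative onto $Q^{1/2}$ by writing $(-A)^{\beta}E(t-s)\cdot(-A)^{1-\beta}Q^{1/2}$ also fails, since $(-A)Q^{1/2}$ is not bounded under Assumption \ref{ass:AQ} (only $(-A)^{1/2}Q^{1/2}$ is). The simultaneous diagonalization sidesteps this by permitting Cauchy--Schwarz in $s$ to be applied mode-by-mode before summing in $i$, so that the $L^{2}$-in-time regularity of $\varphi$ is traded against the eigenvalue growth exactly through the summability $\sum_{i}\lambda_{i}q_{i}<\infty$ guaranteed by Assumption \ref{ass:AQ}.
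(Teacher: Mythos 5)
Your proof is correct and reaches the same conclusion by a similar two-stage bootstrap ($\dot H^1$, then $\dot H^2$), but it handles the two non-trivial pieces differently from the paper. For the convolution $v(t)=\int_0^t E(t-s)Q^{1/2}\varphi(s)\,\mathrm{d}s$, the paper bounds $|(-A)v(t)|$ via the operator-norm estimate $\int_0^t|(-A)E(t-s)Q^{1/2}|_{\mathcal{L}(H)}|\varphi(s)|\,\mathrm{d}s$, applies Cauchy--Schwarz in $s$, passes to the Hilbert--Schmidt norm, and then invokes \cite[Lemma~2.3]{wang2015note} to bound $\int_0^t|(-A)E(s)Q^{1/2}|_{\mathcal{L}_2(H)}^2\,\mathrm{d}s$ by $C|(-A)^{1/2}Q^{1/2}|_{\mathcal{L}_2(H)}^2$; your modewise Cauchy--Schwarz performs the same cancellation (trading one power of $\lambda_i$ from the semigroup against the time integral) but is self-contained, avoids the citation, and in fact only requires $\sup_i\lambda_i q_i<\infty$ rather than summability, so it is nominally sharper. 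For the drift, the paper first proves an $H$-level bound $|z_{0,x}^{\varphi}(t)|\le C(1+|x|)$ by Gronwall (this is also used later in the paper, e.g.\ in the proof of Theorem~\ref{thm:spatialsolutionwap}), then feeds that into a smoothing estimate $\int_0^t(t-s)^{-1/2}|F(z(s))|\,\mathrm{d}s$ for the $\dot H^1$ step without a second Gronwall; you instead apply \eqref{eq:auxiliaryassumptionFuH1} directly at the $\dot H^1$ level and close by Gronwall there, which skips the $H$-level step entirely. Both are sound; the paper's variant has the side benefit of producing the auxiliary bound \eqref{eq:39393939} that is reused downstream, while yours is slightly more streamlined for the lemma itself. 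The $\dot H^2$ drift estimates then coincide. Your diagnosis of why a naive operator-norm bound on $(-A)v(t)$ fails (the $(t-s)^{-1}$ kernel after Cauchy--Schwarz, and the unavailability of $(-A)Q^{1/2}$ bounded) is accurate.
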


\begin{proof}
      Using $|E(t)|_{\mathcal{L}(H)} \leq e^{-\lambda_{1}t} \leq 1,t \geq 0$, \eqref{eq:globalLF} and the H\"{o}lder inequality
      leads to
      \begin{equation*}
      \begin{split}
            &~|z_{0,x}^{\varphi}(t)|
            \leq
            |E(t)x|
            +
            \int_{0}^{t} |E(t-s)F(z_{0,x}^{\varphi}(s))| \diff{s}
            +
            \int_{0}^{t} |E(t-s)Q^{\frac12}\varphi(s)| \diff{s}
            \\\leq&~
            |x|
            +
            C\int_{0}^{t} (1+|z_{0,x}^{\varphi}(s)|) \diff{s}
            +
            |Q^{\frac12}|_{\mathcal{L}(H)} \Big(\int_{0}^{t} 1^{2} \diff{s}\Big)^{\frac{1}{2}}
            \Big(\int_{0}^{t} |\varphi(s)|^{2} \diff{s}\Big)^{\frac{1}{2}}
            \\\leq&~
            |x| + CT
            +
            \sqrt{T}|Q^{\frac12}|_{\mathcal{L}(H)}
            |\varphi|_{L^{2}(0,T;H)}
            +
            C\int_{0}^{t} |z_{0,x}^{\varphi}(s)| \diff{s}.
      \end{split}
      \end{equation*}
      The Gronwall inequality implies
      \begin{equation}\label{eq:39393939}
           |z_{0,x}^{\varphi}(t)|
            \leq
            C(1+|x|), \quad t \in [0,T].
      \end{equation}
      Similarly, we use $|(-A)^{\gamma}E(t)|_{\mathcal{L}(H)}
	  \leq Ct^{-\gamma}, t > 0,\gamma \geq 0$  to get
      \begin{align*}
            &|z_{0,x}^{\varphi}(t)|_{\dot{H}^{1}}
            \leq
            |E(t)x|_{\dot{H}^{1}}
            +
            \int_{0}^{t} |E(t-s)F(z_{0,x}^{\varphi}(s))|_{\dot{H}^{1}} \diff{s}
            +
            \int_{0}^{t} |E(t-s)Q^{\frac12}\varphi(s)|_{\dot{H}^{1}} \diff{s}
            \\\leq&
            |x|_{\dot{H}^{1}}
            +
            C\int_{0}^{t} |(-A)^{\frac{1}{2}}E(t-s)|_{\mathcal{L}(H)}
            |F(z_{0,x}^{\varphi}(s))| \diff{s}
            +
            |(-A)^{\frac{1}{2}}Q^{\frac12}|_{\mathcal{L}(H)}
            \int_{0}^{t} |\varphi(s)| \diff{s}
            \\\leq&
            |x|_{\dot{H}^{1}} + C(1+|x|)
            +
            \sqrt{T}
            |(-A)^{\frac{1}{2}}Q^{\frac12}|_{\mathcal{L}_{2}(H)}
            |\varphi|_{L^{2}(0,T;H)}
            \\\leq&
            C(1+|x|_{\dot{H}^{1}}).
      \end{align*}
      In the same way,
      \begin{align*}
            |z_{0,x}^{\varphi}(t)|_{\dot{H}^{2}}
            \leq&~
            |E(t)x|_{\dot{H}^{2}}
            +
            \int_{0}^{t} |E(t-s)F(z_{0,x}^{\varphi}(s))|_{\dot{H}^{2}} \diff{s}
            +
            \int_{0}^{t} |E(t-s)Q^{\frac12}\varphi(s)|_{\dot{H}^{2}} \diff{s}
            \\\leq&~
            |x|_{\dot{H}^{2}}
            +
            \int_{0}^{t} |(-A)^{\frac{1}{2}}E(t-s)|_{\mathcal{L}(H)}
            |F(z_{0,x}^{\varphi}(s))|_{\dot{H}^{1}} \diff{s}
            \\&~+
            \int_{0}^{t} |(-A)E(t-s)Q^{\frac12}|_{\mathcal{L}(H)}|\varphi(s)| \diff{s}
            \\\leq&~
            |x|_{\dot{H}^{2}}
            +
            C(1+|x|_{\dot{H}^{1}})
            +
            |\varphi|_{L^{2}(0,T;H)}
            \Big(\int_{0}^{t} |(-A)E(t-s)Q^{\frac12}|_{\mathcal{L}_{2}(H)}^{2} \diff{s}\Big)^{\frac{1}{2}}
            \\\leq&~
            |x|_{\dot{H}^{2}} + C(1+|x|_{\dot{H}^{2}})
            +
            C|\varphi|_{L^{2}(0,T;H)} |(-A)^{\frac{1}{2}}Q^{\frac12}|_{\mathcal{L}_{2}(H)}
            \\\leq&~
            C(1+|x|_{\dot{H}^{2}}),
      \end{align*}
      where we have used \eqref{eq:auxiliaryassumptionFuH1} and \cite[Lemma 2.3]{wang2015note}.
      Thus we complete the proof.
\end{proof}

Lemma \ref{lem:spatialregularity} means that
the mild solution $\{z_{0,x}^{\varphi}(t)\}_{t \in [0,T]}$ is also a strong solution. To proceed, we denote by $W_{2}^{1,2}(T)$ the closure of $C^{\infty}([0,T] \times [0,1];\R)$ in the norm
\begin{equation*}
\begin{split}
      |z|_{W_{2}^{1,2}(T)}
      :=&
      \Big(
      \int_{0}^{T}\int_{0}^{1}
          |z(t,\xi)|^{2}
          +
          \Big|\frac{\partial z(t,\xi)}{\partial t}\Big|^{2}
          +
          \Big|\frac{\partial z(t,\xi)}{\partial\xi}\Big|^{2}
          +
          \Big|\frac{\partial^{2} z(t,\xi)}{\partial\xi^{2}}\Big|^{2}
        \diff{\xi}\diff{t}
      \Big)^{\frac{1}{2}}.
\end{split}
\end{equation*}
%
As $Q$ is one-to-one, we have
\begin{equation}\label{eq:I0Txz}
      I_{0,T}^{x}(z)
      =
      \begin{cases}
        \frac{1}{2}
        \int_{0}^{T}
            \Big|Q^{-\frac{1}{2}}
            \Big(\frac{\diff{z(t)}}{\diff{t}}
                 -
                 Az(t) - F(z(t))\Big)
            \Big|^{2}
        \diff{t}
        , & \mbox{if } z \in W_{2,Q}^{1,2}(T) \text{~with~} z(0) = x, \\
        +\infty, & \text{otherwise},
      \end{cases}
\end{equation}
where
\begin{align*}
      W_{2,Q}^{1,2}(T)
      :=
      \Big\{&z \in W_{2}^{1,2}(T):
      \frac{\diff{z(t)}}{\diff{t}} - Az(t) - F(z(t)) \in Q^{\frac{1}{2}}(H) \text{~for almost all~} t \in [0,T],
      \\&\int_{0}^{T}
      \Big|Q^{-\frac{1}{2}}\Big(\frac{\diff{z(t)}}{\diff{t}} - Az(t) - F(z(t))\Big)\Big|^{2}\diff{t} < +\infty\Big\}.
\end{align*}

For the rate function $I_{0,T}^{n,y}$, its effective domain is given by
$$
      \mathcal{H}_{1}^{n,y}(T)
      :=
      \Big\{z \in C([0,T];H_{n}):
      z(\cdot) = y + \int_{0}^{\cdot} \upsilon(s) \diff{s},
      \upsilon \in L^{2}(0,T;H_{n})\Big\}.
$$
It follows that
\begin{equation}\label{eq:I0Tnyz}
      I_{0,T}^{n,y}(z)
      =
      \begin{cases}
        \frac{1}{2}
        \int_{0}^{T}
            \Big|Q_{n}^{-\frac12}\Big(\frac{\diff{z(t)}}{\diff{t}}
                 -
                 A_{n}z(t) - F_{n}(z(t))\Big)
            \Big|^{2}
        \diff{t}
        , & \mbox{if~} z \in \mathcal{H}_{1}^{n,y}(T), \\
        +\infty, & \mbox{otherwise},
      \end{cases}
\end{equation}
where $Q_{n}^{-\frac{1}{2}} \colon H_{n} \to H_{n}$ is the inverse operator of the bijective operator $Q_{n}^{\frac{1}{2}} \colon H_{n} \to H_{n}$.

Similarly to \cite[Definition 4.2]{chen2020large}, we introduce the definition of weakly asymptotical preservation for the LDP of sample paths by a numerical method.
\begin{Definition}
\label{def:semiweakasympres}
      We say that 
      the semi-discrete numerical method \eqref{eq:SPDEsemi}
      weakly asymptotically preserves the LDP of $\{X_{x}^{\varepsilon}\}_{\varepsilon > 0}$ if for any $\kappa > 0$ and $z \in W_{2,Q}^{1,2}(T)$ with $z(0) = x$, there exist $n \in \N^{+}$ and $z_{n} \in \mathcal{H}_{1}^{n,y}(T)$ such that
      $$
            |z-z_{n}|_{C([0,T];H)} < \kappa,
            \quad\quad
            |I_{0,T}^{x}(z)-I_{0,T}^{n,y}(z_{n})| < \kappa.
      $$
\end{Definition}

After these preparations, we have the following result.

\begin{Theorem}
\label{thm:spatialsolutionwap}
      Suppose that Assumptions \ref{ass:AQ} and \ref{ass:F} hold. If $x \in \dot{H}^{2}$, then
      the semi-discrete numerical method \eqref{eq:SPDEsemi}
      weakly asymptotically preserves the LDP of $\{X_{x}^{\varepsilon}\}_{\varepsilon > 0}$.
\end{Theorem}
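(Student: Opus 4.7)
The natural candidate is the Galerkin truncation $z_n := P_n z$. I will show that both $|z - z_n|_{C([0,T];H)}$ and $|I_{0,T}^x(z) - I_{0,T}^{n,y}(z_n)|$ tend to zero as $n \to \infty$, so for any $\kappa > 0$ a sufficiently large $n$ gives the required pair $(z_n, y)$ with $y = P_n x$.

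First I verify that $z_n$ is admissible. Since $z \in W_{2,Q}^{1,2}(T) \subset W_2^{1,2}(T)$, its weak time derivative $\dot z$ belongs to $L^2(0,T;H)$; therefore $z_n(t) = P_n z(t) = P_n x + \int_0^t P_n \dot z(s)\,\mathrm{d} s = y + \int_0^t P_n\dot z(s)\,\mathrm{d} s$ with $P_n\dot z \in L^2(0,T;H_n)$, which places $z_n$ in $\mathcal{H}_1^{n,y}(T)$. Since the hypothesis $z \in W_{2,Q}^{1,2}(T)$ with $z(0)=x$ is equivalent to $z = z_{0,x}^\varphi$ for $\varphi := Q^{-1/2}(\dot z - Az - F(z)) \in L^2(0,T;H)$, Lemma~\ref{lem:spatialregularity} applies and yields $\sup_{t \in [0,T]}|z(t)|_{\dot{H}^2} \le C(1+|x|_{\dot{H}^2})$. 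Together with the standard projection error $|u - P_n u| \le \lambda_{n+1}^{-1}|u|_{\dot{H}^2}$, this gives $|z - z_n|_{C([0,T];H)} \le C\lambda_{n+1}^{-1}(1+|x|_{\dot{H}^2}) \to 0$.

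For the rate functions I use the explicit forms \eqref{eq:I0Txz} and \eqref{eq:I0Tnyz}. Because $A$ and $Q^{1/2}$ are simultaneously diagonalized by $\{e_i\}$, $P_n$ commutes with both $A$ and $Q^{1/2}$, and one can check $Q_n^{-1/2}P_n Q^{1/2} = P_n$ on $H$. Writing $\dot z_n - A_n z_n - F_n(z_n) = P_n(\dot z - Az - F(z)) + P_n(F(z) - F(P_n z))$ and applying $Q_n^{-1/2}$, the corresponding ``control'' for $z_n$ decomposes as
\begin{equation*}
\varphi_n(t) := Q_n^{-1/2}\bigl(\dot z_n - A_n z_n - F_n(z_n)\bigr) = P_n\varphi(t) + R_n(t),\qquad R_n(t) := Q_n^{-1/2}P_n\bigl(F(z(t)) - F(P_n z(t))\bigr).
\end{equation*}
Hence $|I_{0,T}^x(z) - I_{0,T}^{n,y}(z_n)| = \tfrac12\bigl||\varphi_n|_{L^2}^2 - |\varphi|_{L^2}^2\bigr|$ is controlled by $|(I-P_n)\varphi|_{L^2(0,T;H)}^2 + 2|\varphi|_{L^2(0,T;H)}|R_n|_{L^2(0,T;H)} + |R_n|_{L^2(0,T;H)}^2$. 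The first term vanishes as $n \to \infty$ by dominated convergence, and it remains to treat $R_n$.

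The main obstacle is the pointwise bound on $R_n$, since naive use of $Q_n^{-1/2}$ gives constants depending on $\max_{i\le n} q_i^{-1/2}$. The fix is the hybrid condition $|Q^{-1/2}(-A)^{-1}|_{\mathcal{L}(H)} < \infty$ of Assumption~\ref{ass:AQ}, which yields $|Q^{-1/2}v| \le C|v|_{\dot{H}^2}$ for $v \in \dot{H}^2$; applied to $v = P_n(F(z(t)) - F(P_n z(t)))$, and combined with \eqref{eq:auxiliaryassumption}, this gives
\begin{equation*}
|R_n(t)| \le C |F(z(t)) - F(P_n z(t))|_{\dot{H}^2} \le C\bigl(1 + |x|_{\dot{H}^2}^2\bigr)|z(t) - P_n z(t)|_{\dot{H}^2}.
\end{equation*}
Since $|z(t) - P_n z(t)|_{\dot{H}^2} \to 0$ pointwise in $t$ with the uniform bound $|z(t)|_{\dot{H}^2} \le C(1+|x|_{\dot{H}^2})$ as dominating function, dominated convergence gives $|R_n|_{L^2(0,T;H)} \to 0$. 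Putting everything together, both $|z - z_n|_{C([0,T];H)}$ and $|I_{0,T}^x(z) - I_{0,T}^{n,y}(z_n)|$ can be made $<\kappa$ by choosing $n$ large, which is exactly the content of Definition~\ref{def:semiweakasympres}.
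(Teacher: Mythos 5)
Your proof is correct, but it takes a genuinely different route from the paper's. The key difference is the choice of the approximating path $z_n$. The paper does not take $z_n = P_n z$; instead it \emph{constructs} $z_n$ as the solution of the controlled ODE $\dot z_n = A_n z_n + F_n(z_n) + Q_n^{1/2}P_n\varphi$ with $z_n(0)=P_nx$, where $\varphi := Q^{-1/2}(\dot z - Az - F(z))$. With this choice the rate-function comparison is immediate, since $I_{0,T}^{n,y}(z_n) = \frac12|P_n\varphi|_{L^2}^2$ on the nose; the price is that $|z - z_n|_{C([0,T];H)}\to 0$ requires a Gronwall argument in $|z(t)-z_n(t)|^2$, where the driving term $\Pi_n$ collects the projection errors $|(I-P_n)x|^2$, $|(I-P_n)F(z)|^2$, $|(I-P_n)Q^{1/2}\varphi|^2$, each vanishing by dominated convergence. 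You reverse this trade-off: taking $z_n = P_n z$ makes the $C([0,T];H)$ closeness trivial (a one-line projection estimate via the $\dot H^2$ bound of Lemma~\ref{lem:spatialregularity}), but then the control associated to $z_n$ is not $P_n\varphi$ but $P_n\varphi + R_n$ with $R_n = Q_n^{-1/2}P_n(F(z)-F(P_nz))$, and you must show $|R_n|_{L^2}\to 0$. Your observation that the second hybrid condition in Assumption~\ref{ass:AQ}, $|Q^{-1/2}(-A)^{-1}|_{\mathcal{L}(H)}<\infty$, gives $|Q^{-1/2}v|\le C|v|_{\dot H^2}$, combined with \eqref{eq:auxiliaryassumption} and the uniform $\dot H^2$ bound on $z$, is exactly what makes this work. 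A minor consequence of the different routes: the paper's Gronwall estimate actually uses only the weak $H$-bound \eqref{eq:39393939} from Lemma~\ref{lem:spatialregularity}, invoking $\dot H^2$ regularity only implicitly to justify the explicit formula \eqref{eq:I0Txz}; your argument uses the full $\dot H^2$ estimate twice (for the projection error and for $R_n$). Both proofs are valid; yours is shorter on the trajectory estimate, the paper's is shorter on the rate-function estimate.
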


\begin{proof}
      For any $z \in W_{2,Q}^{1,2}(T)$ with $z(0) = x$,
      we define
      $$\varphi(t)
      :=
      Q^{-\frac{1}{2}}
            \Big(\frac{\diff{z(t)}}{\diff{t}}
                 -
                 Az(t) - F(z(t))\Big)$$
      for almost all $t \in [0,T]$. Then $\varphi \in L^2(0,T;H)$,  $z_{0,x}^{\varphi} = z$ and
      \begin{equation}\label{eq:IoTxz311311}
            \frac{1}{2}|\varphi|_{L^2(0,T;H)}^{2} = I_{0,T}^{x}(z) < +\infty,
      \end{equation}
      which together with \eqref{eq:39393939} in Lemma \ref{lem:spatialregularity} yields
      \begin{equation}\label{eq:ztdotH2z0x313}
            |z(t)|
            =
            |z_{0,x}^{\varphi}(t)|
            \leq
            C(1+|x|),\quad t \in [0,T].
      \end{equation}
      Now for any $n \in \N^{+}$, we define $\{z_{n}(t)\}_{t \in [0,T]}$ by
      \begin{equation}\label{eq:dzntdt312}
            \frac{\diff{z_{n}(t)}}{\diff{t}}
            =
            A_{n}z_{n}(t) + F_{n}(z_{n}(t))
            + Q_{n}^{\frac{1}{2}}P_{n}\varphi(t),
            \quad t \in (0,T],
            \quad z_{n}(0) = P_{n}x.
      \end{equation}
      It follows from $\langle Au,u \rangle \leq -\lambda_{1}|u|^{2}, u \in H$ and \eqref{eq:globalLF} that
      \begin{align*}
            &\frac{1}{2}\frac{\diff{|z(t)-z_{n}(t)|^{2}}}{\diff{t}}
            =
            \Big\langle \frac{\diff{(z(t)-z_{n}(t))}}{\diff{t}},
            z(t)-z_{n}(t) \Big\rangle
            \\=&
            \langle A(z(t)-z_{n}(t)),z(t)-z_{n}(t) \rangle
            +
            \langle (I-P_{n})F(z(t)),z(t)-z_{n}(t) \rangle
            \\&+
            \langle F_{n}(z(t))-F_{n}(z_{n}(t)),z(t)-z_{n}(t) \rangle
            +
            \langle (I-P_{n})Q^{\frac{1}{2}}\varphi(t),
            z(t)-z_{n}(t) \rangle
            \\\leq&
            -\lambda_{1}|z(t)-z_{n}(t)|^{2}
            +
            |(I-P_{n})F(z(t))||z(t)-z_{n}(t)|
            \\&+
            L_{F}|z(t)-z_{n}(t)|^{2}
            +
            |(I-P_{n})Q^{\frac{1}{2}}\varphi(t)|
            |z(t)-z_{n}(t)|
            \\\leq&
            L_{F}|z(t)-z_{n}(t)|^{2}
            +
            \frac{1}{2\lambda_{1}}|(I-P_{n})F(z(t))|^{2}
            +
            \frac{1}{2\lambda_{1}}
            |(I-P_{n})Q^{\frac{1}{2}}\varphi(t)|^{2},
      \end{align*}
      where the weighted Young inequality is used in the last step. Integrating yields
      \begin{align*}
            |z(t)&-z_{n}(t)|^{2}
            \leq
            |x-P_{n}x|^{2}
            +
            2L_{F}\int_{0}^{t}|z(s)-z_{n}(s)|^{2}\diff{s}
            \\&+
            \frac{1}{\lambda_{1}}\int_{0}^{t}
            |(I-P_{n})F(z(s))|^{2}\diff{s}
            +
            \frac{1}{\lambda_{1}}\int_{0}^{t}
            |(I-P_{n})Q^{\frac{1}{2}}\varphi(s)|^{2}\diff{s}
      \end{align*}
      and consequently
      \begin{align*}
            |z&-z_{n}|_{C([0,t];H)}^{2}
            \leq
            \Pi_{n}
            +
            2L_{F}\int_{0}^{t}|z-z_{n}|_{C([0,s];H)}^{2}\diff{s}
      \end{align*}
      with
      \begin{equation}
            \Pi_{n}
            :=
            |(I-P_{n})x|^{2}
            +
            \frac{1}{\lambda_{1}}\int_{0}^{T}
            |(I-P_{n})F(z(s))|^{2}\diff{s}
            +
            \frac{1}{\lambda_{1}}\int_{0}^{T}
            |(I-P_{n})Q^{\frac{1}{2}}\varphi(s)|^{2}\diff{s}.
      \end{equation}
      The Gronwall inequality shows
      \begin{equation}\label{eq:zzn314}
            |z-z_{n}|_{C([0,t];H)}^{2} \leq \Pi_{n}e^{2L_{F}t} \leq \Pi_{n}e^{2L_{F}T}, \quad t \in [0,T].
      \end{equation}
      Applying \eqref{eq:globalLF} and \eqref{eq:ztdotH2z0x313} yields $$|(I-P_{n})F(z(t))|^{2} \leq |F(z(t))|^{2} = |F(z_{0,x}^{\varphi}(t))|^{2} \leq C(1+|x|^{2}),\quad t \in [0,T].$$
      We use $\lim\limits_{n \to \infty}|(I-P_{n})F(z(t))|^{2} = 0, t \in [0,T]$ and the bounded convergence theorem to get
      \begin{equation*}
            \lim_{n \to \infty}
            \frac{1}{\lambda_{1}}\int_{0}^{T}
            |(I-P_{n})F(z(s))|^{2}\diff{s}
            =
            0.
      \end{equation*}
      In the same way, one can validate $\lim\limits_{n \to \infty}\Pi_{n} = 0$ and thus $\lim\limits_{n \to \infty}|z-z_{n}|_{C([0,T];H)} = 0$ by \eqref{eq:zzn314}. It follows that for any $\kappa > 0$, there exists $n_{1} \in \N^{+}$ such that
      \begin{equation}\label{eq:z-znC0TH}
            |z-z_{n}|_{C([0,T];H)} < \kappa,\quad n > n_{1}.
      \end{equation}
      By \eqref{eq:dzntdt312} and \eqref{eq:I0Tnyz}, we have $z_{n} \in \mathcal{H}_{1}^{n,y}(T)$ and hence
      \begin{equation*}
            I_{0,T}^{n,y}(z_{n})
            =
            \frac{1}{2}
            \int_{0}^{T}
            \Big|Q_{n}^{-\frac12}
            \Big(\frac{\diff{z_{n}(t)}}{\diff{t}}
                 -
                 A_{n}z_{n}(t) - F_{n}(z_{n}(t))\Big)
            \Big|^{2}
            \diff{t}.
      \end{equation*}
      It immediately follows from \eqref{eq:dzntdt312} that
      \begin{equation}\label{eq:I0Tny317317}
            I_{0,T}^{n,y}(z_{n})
            =
            \frac{1}{2}|P_{n}\varphi|_{L^2(0,T;H)}^{2}.
      \end{equation}
      Applying \eqref{eq:IoTxz311311}, \eqref{eq:I0Tny317317} and the H\"{o}lder inequality leads to
      \begin{align*}
             |I_{0,T}^{x}(z)-I_{0,T}^{n,y}(z_{n})|
            =&
            \frac{1}{2}\Big|\int_{0}^{T}
            \big\langle (I+P_{n})\varphi(t),
            (I-P_{n})\varphi(t)\big\rangle \diff{t}\Big|
            \\\leq&
            |\varphi|_{L^2(0,T;H)}
            \Big(\int_{0}^{T}
            |(I-P_{n})\varphi(t)|^{2} \diff{t}\Big)^{\frac{1}{2}}.
      \end{align*}
      By the Lebesgue dominated convergence theorem, we conclude
      \begin{equation}\label{eq:313313313}
            \lim\limits_{n \to \infty}
            |I_{0,T}^{x}(z)-I_{0,T}^{n,y}(z_{n})| = 0,
      \end{equation}
      which yields that there exists $n_{2} \in \N^{+}$ such that
      \begin{equation}\label{eq:I0Txz-I0Tnyzn}
            |I_{0,T}^{x}(z)-I_{0,T}^{n,y}(z_{n})|
            <
            \kappa, \quad n > n_{2}.
      \end{equation}
      By choosing $n > \max\{n_{1},n_{2}\}$, we complete the proof.
\end{proof}

\subsection{LDP for invariant measures of spatial discretization}
\label{subsec:semiinvameaLDPs}
As is well known, the uniform LDP of sample paths not only is of importance in itself but also can be used to derive the LDP of invariant measures; see, e.g., \cite{sowers1992largeinvariant,cerrai2005largeinvariant}. With this purpose, one must first guarantee the existence of invariant measures of spatial discretization \eqref{eq:SPDEsemi}.
According to \cite[Theorem 3.1]{CGW2020ErgodicityANM} and \cite[Remark 7.2 and Proposition 7.10]{da2006introduction},
the family of probability measures $\{\mu_{t}^{\varepsilon,n}\}_{t > 0}$, defined by
      $$
            \mu_{t}^{\varepsilon,n}(B)
            :=
            \frac{1}{t} \int_{0}^{t}
            \P\big(X_{0}^{\varepsilon,n}(s) \in B\big)
            \diff{s},
            \quad B \in \mathcal{B}(H_{n}), t > 0
      $$
is tight. It follows from the Krylov--Bogoliubov theorem (\cite[Theorem 7.1]{da2006introduction}) that there exists $\{t_{i}\}_{i \in \N^{+}} \uparrow +\infty$ (possibly depending on $\varepsilon$) such that the sequence $\{\mu_{t_{i}}^{\varepsilon,n}\}_{i \in \N^{+}}$ converges weakly to some probability measure $\mu^{\varepsilon,n}$ on $(H_{n},\mathcal{B}(H_{n}))$, which is invariant for \eqref{eq:SPDEsemi}. 
To study the LDP of invariant measures $\{\mu^{\varepsilon,n}\}_{\varepsilon > 0}$, we define $V^{n} \colon H_{n} \to [0,+\infty]$ by
\begin{equation}\label{eq:Vny}
      V^{n}(v)
      =
      \inf\{I_{0,T}^{n,0}(z) : T > 0, z \in C([0,T];H_{n}), z(0) = 0, z(T) = v \},
\end{equation}
and its level set
$K^{n}(\alpha) = \{v \in H_{n} : V^{n}(v) \leq \alpha\}$ for each $\alpha \geq 0$.

\subsubsection{$V^{n}$ being a good rate function}

In this part, we will use the boundedness of $K^{n}$ to show that $V^{n}$ is a good rate function. To this end, we need the following lemma.

\begin{Lemma}\label{lem:4.9}
      Suppose that Assumptions \ref{ass:AQ}, \ref{ass:F} and \ref{ass:LFleqlambda1} hold and let $\{z(t)\}_{t \in [0,T]}$ be the mild solution of the following equation
      \begin{equation}\label{eq:diffztdifft}
            \frac{\diff{z(t)}}{\diff{t}}
            =
            A_{n}z(t) + F_{n}(z(t)) + Q_{n}^{\frac12}\psi(t),
            \quad t \in (0,T],
            \quad z(0) = y \in H_{n}
      \end{equation}
      with $\psi \in L^{2}(0,T;H_{n})$. Then we have
      \begin{equation}\label{eq:ztHn2}
            |z(t)|^{2}
            \leq
            C\big(1+|y|^{2}
            +
            |Q^{\frac12}|_{\mathcal{L}(H)}^{2}
            |\psi|_{L^{2}(0,T;H)}^{2}\big),
            \quad t \in [0,T].
      \end{equation}
\end{Lemma}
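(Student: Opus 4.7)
The strategy is an energy estimate on $|z(t)|^2$ that converts the dissipativity condition \eqref{eq:dissipativecond} (available because Assumption \ref{ass:LFleqlambda1} holds) into an exponentially stable linear ODI, together with a Young-type bound for the forcing term $Q_n^{1/2}\psi(t)$. Because everything is finite-dimensional, no mild-to-strong regularization step is needed.

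First, I would record that since $H_n$ is finite-dimensional and $A_n=P_nA$ is bounded on $H_n$, the mild solution $z$ of \eqref{eq:diffztdifft} is automatically a classical (absolutely continuous) solution of the ODE
\begin{equation*}
      \frac{\diff{z(t)}}{\diff{t}}
      =
      A_n z(t) + F_n(z(t)) + Q_n^{\frac12}\psi(t)
      \qquad \text{for a.e.\ }t\in[0,T],
\end{equation*}
so that $|z(\cdot)|^2$ is absolutely continuous with
$\frac{\dif{}}{\dif{t}}|z(t)|^2 = 2\langle z(t), A_n z(t) + F_n(z(t))\rangle + 2\langle z(t), Q_n^{\frac12}\psi(t)\rangle$ a.e. Next, since $P_n$ is self-adjoint and $P_n z(t)=z(t)$, one obtains $\langle z(t), A_n z(t)+F_n(z(t))\rangle = \langle z(t), Az(t)+F(z(t))\rangle$; as $H_n\subset\mathcal{D}(A)=\dot H^2$ (it is spanned by eigenfunctions of $A$), the dissipativity \eqref{eq:dissipativecond} applies and yields $\langle z(t), Az(t)+F(z(t))\rangle \leq -c|z(t)|^2$ with $c=\lambda_1-L_F>0$.

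For the forcing, Cauchy--Schwarz plus the weighted Young inequality with weight $c$, together with $|Q_n^{\frac12}\psi(t)| = |P_nQ^{\frac12}\psi(t)| \leq |Q^{\frac12}|_{\mathcal{L}(H)}|\psi(t)|$, give
\begin{equation*}
      2\langle z(t), Q_n^{\frac12}\psi(t)\rangle
      \leq
      c|z(t)|^2
      +
      \tfrac{1}{c}|Q^{\frac12}|_{\mathcal{L}(H)}^{2}|\psi(t)|^{2}.
\end{equation*}
Combining these yields the differential inequality
$\frac{\dif{}}{\dif{t}}|z(t)|^2 \leq -c|z(t)|^2 + \tfrac{1}{c}|Q^{\frac12}|_{\mathcal{L}(H)}^{2}|\psi(t)|^2$. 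Integrating from $0$ to $t$ and discarding the non-positive term $-c\int_0^t|z(s)|^2\dif{s}$ gives
\begin{equation*}
      |z(t)|^{2}
      \leq
      |y|^{2}
      +
      \tfrac{1}{c}|Q^{\frac12}|_{\mathcal{L}(H)}^{2}|\psi|_{L^{2}(0,T;H)}^{2},
      \qquad t\in[0,T],
\end{equation*}
which implies \eqref{eq:ztHn2} by taking $C=\max\{1,1/c\}$.

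I do not anticipate a major obstacle: the finite-dimensional setting removes all regularity concerns, and the one delicate step is the reduction $\langle z, A_nz+F_n(z)\rangle = \langle z, Az+F(z)\rangle$ so that the $H$-level dissipativity \eqref{eq:dissipativecond} can be invoked on $z(t)\in H_n\subset\dot H^2$. Everything else is Cauchy--Schwarz, Young, and direct integration of a stable scalar ODI, with $F(0)=0$ (from Assumption \ref{ass:LFleqlambda1}) ensuring there is no extra inhomogeneous term beyond the $Q^{\frac12}\psi$ contribution.
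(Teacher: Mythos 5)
Your proof is correct, and it takes a genuinely more direct route than the paper. You differentiate $|z(t)|^2$ directly, reduce $\langle z,A_nz+F_n(z)\rangle$ to $\langle z,Az+F(z)\rangle$ (valid since $P_n$ is self-adjoint, $P_nz=z$, and $H_n\subset\dot H^2$ is invariant for $A$), invoke the dissipativity \eqref{eq:dissipativecond}, and absorb the forcing term via Cauchy--Schwarz and the weighted Young inequality into a stable scalar ODI; one integration then gives the bound. The paper instead first isolates the convolution $O(t)=\int_0^t E_n(t-s)Q_n^{1/2}\psi(s)\,\mathrm{d}s$, bounds it uniformly by Hölder, and runs the energy estimate on $\bar z=z-O$, where the forcing is gone and the only new term $F_n(\bar z+O)-F_n(\bar z)$ is controlled by the Lipschitz constant; finally it recombines via $|z|^2\le 2|\bar z|^2+2|O|^2$. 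The paper's subtraction trick is the standard device for \emph{stochastic} convolutions (where direct differentiation of $|z|^2$ is unavailable without It\^{o}), and it is subsequently reused for $X_0^{\varepsilon,n}$ in the proofs of Theorem~\ref{th:semiinvariantmeasurelowerbounded} and Lemma~\ref{lem:stochasticevolutiontailestimate}; since here the control $\psi$ is a deterministic $L^2$ function, your direct energy argument is equally rigorous and slightly cleaner, yielding the explicit constant $C=\max\{1,1/c\}$.
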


\begin{proof}
      Setting $O(t) :=  \int_{0}^{t} E_{n}(t-s) Q_{n}^{\frac12}\psi(s) \diff{s}, t \in [0,T]$, we use $|E_{n}(t)|_{\mathcal{L}(H_{n})} \leq e^{-\lambda_{1}t},t \in [0,T]$ and the H\"{o}lder inequality to get
      \begin{equation}\label{eq:OtHn}
      \begin{split}
            |O(t)|
            \leq&
            |Q_{n}^{\frac12}|_{\mathcal{L}(H)}
            \int_{0}^{t} e^{-\lambda_{1}(t-s)}|\psi(s)| \diff{s}
            \leq
            |Q^{\frac12}|_{\mathcal{L}(H)}
            |\psi|_{L^{2}(0,T;H)},
            \quad t \in [0,T].
      \end{split}
      \end{equation}
      Let $\bar{z}(t) :=  z(t)-O(t), t \in [0,T]$, then $\{\bar{z}(t)\}_{t \in [0,T]}$ satisfies
      $$
            \frac{\dif{\bar{z}(t)}}{\dif{t}}
            =
            A_{n}\bar{z}(t) + F_{n}(\bar{z}(t)+O(t)),
            \quad t \in (0,T],
            \quad \bar{z}(0) = y.
      $$
      Using \eqref{eq:dissipativecond} 
      and \eqref{eq:globalLF}
      yields
      \begin{align*}
            &~\frac{\diff{e^{ct}|\bar{z}(t)|^{2}}}
                 {\diff{t}}
            =
            ce^{ct}|\bar{z}(t)|^{2}
            +
            2e^{ct}\big\langle
                 A_{n}\bar{z}(t) + F_{n}(\bar{z}(t)+O(t)),\bar{z}(t)
            \big\rangle
            \\=&~
            ce^{ct}|\bar{z}(t)|^{2}
            +
            2e^{ct}\big\langle
                 A_{n}\bar{z}(t) + F_{n}(\bar{z}(t)),\bar{z}(t)
            \big\rangle
            +
            2e^{ct}\big\langle
                 F_{n}(\bar{z}(t)+O(t))-F_{n}(\bar{z}(t)),\bar{z}(t)
            \big\rangle
            \\\leq&~
            -
            ce^{ct}|\bar{z}(t)|^{2}
            +
            2L_{F}e^{ct}|O(t)||\bar{z}(t)|
            \leq
            \frac{L_{F}^{2}}{c}e^{ct}|O(t)|^{2}
      \end{align*}
      due to the weighted Young inequality $ab \leq \kappa a^{2} +\frac{b^{2}}{4\kappa}$ for all $a,b \in \R$ with $\kappa = c > 0$ in the last step. Applying \eqref{eq:OtHn} leads to
      \begin{equation*}
            |\bar{z}(t)|^{2}
            \leq
            |y|^{2}
            +
            \frac{L_{F}^{2}}{c^{2}}
            |Q^{\frac12}|_{\mathcal{L}(H)}^{2}
            |\psi|_{L^{2}(0,T;H)}^{2},
            \quad t \in [0,T].
      \end{equation*}
      Together with $|z(t)|^{2} \leq 2|\bar{z}(t)|^{2} + 2|O(t)|^{2}$ and \eqref{eq:OtHn}, we complete the proof.
\end{proof}

\begin{Theorem}
\label{th:semiinvariantgood}
      Suppose that Assumptions \ref{ass:AQ}, \ref{ass:F} and \ref{ass:LFleqlambda1} hold. Then the mapping $V^{n}$ given by \eqref{eq:Vny} is a good rate function.
\end{Theorem}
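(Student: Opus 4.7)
The plan is to verify that for every $\alpha \geq 0$ the sublevel set $K^n(\alpha) = \{v \in H_n : V^n(v) \leq \alpha\}$ is compact in $H_n$. Since $H_n$ is finite-dimensional, compactness is equivalent to being closed and bounded, so I will split the argument accordingly.

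For boundedness, I will exploit Lemma \ref{lem:4.9} directly. Fix $v \in K^n(\alpha)$ and, by the infimum representation of $V^n$, pick approximate minimizers: $T_k > 0$ and $\psi_k \in L^2(0, T_k; H_n)$ with $z_{0,0}^{n, \psi_k}(T_k) = v$ and $\tfrac{1}{2}|\psi_k|^2_{L^2(0, T_k; H)} \leq V^n(v) + k^{-1} \leq \alpha + k^{-1}$. Applying \eqref{eq:ztHn2} with $y = 0$ at $t = T_k$ gives $|v|^2 \leq C\bigl(1 + 2(\alpha + k^{-1})|Q^{\frac12}|^2_{\mathcal{L}(H)}\bigr)$, and letting $k \to \infty$ yields a radius $R_\alpha$ independent of $v$ with $K^n(\alpha) \subset \{v \in H_n : |v| \leq R_\alpha\}$.

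For closedness, take a sequence $v_k \in K^n(\alpha)$ with $v_k \to v$ in $H_n$; the task is to show $V^n(v) \leq \alpha$. The strategy is to glue a short ``connecting'' path from $v_k$ to $v$ onto an approximate minimizer of $V^n(v_k)$. Fix $\eta_k \downarrow 0$ and choose $z_k^{(1)} \in C([0, T_k]; H_n)$ with $z_k^{(1)}(0) = 0$, $z_k^{(1)}(T_k) = v_k$, and $I_{0, T_k}^{n, 0}(z_k^{(1)}) \leq V^n(v_k) + \eta_k$. Set $\delta_k := \sqrt{|v - v_k|}$ (with any $\delta_k \downarrow 0$ if $v_k = v$) and define the linear interpolation $\tilde{z}_k(t) := v_k + (t/\delta_k)(v - v_k)$ for $t \in [0, \delta_k]$. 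Using the explicit formula \eqref{eq:I0Tnyz} together with the finite-dimensional bound $|Q_n^{-\frac12}|_{\mathcal{L}(H_n)} \leq \max_{i \leq n} q_i^{-\frac12} < \infty$ and the uniform bound $|\tilde{z}_k(t)| \leq R_\alpha + 1$ (which follows from the boundedness step since $v_k, v \in K^n(\alpha)$ eventually), a direct computation yields
\begin{equation*}
      I_{0, \delta_k}^{n, v_k}(\tilde{z}_k)
      \leq
      |Q_n^{-\frac12}|^2_{\mathcal{L}(H_n)}
      \Big(\frac{|v - v_k|^2}{\delta_k} + C(n, \alpha)\, \delta_k\Big)
      = O\bigl(\sqrt{|v - v_k|}\bigr)
      \xrightarrow[k \to \infty]{} 0.
\end{equation*}
Concatenating $z_k^{(1)}$ and $\tilde{z}_k$ produces an admissible path from $0$ to $v$ on $[0, T_k + \delta_k]$ whose cost is at most $V^n(v_k) + \eta_k + I_{0, \delta_k}^{n, v_k}(\tilde{z}_k) \leq \alpha + o(1)$, so $V^n(v) \leq \alpha$ as desired.

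The principal obstacle is the design of this cheap connecting path: the scaling $\delta_k = \sqrt{|v - v_k|}$ is engineered to balance the ``kinetic'' contribution $|v - v_k|^2/\delta_k$ against the ``drift'' contribution $C(n,\alpha)\delta_k$ produced by $A_n \tilde{z}_k + F_n(\tilde{z}_k)$, so that both vanish as $k \to \infty$. Finite-dimensionality of $H_n$ is essential both for the invertibility of $Q_n^{\frac12}$ (making $|Q_n^{-\frac12}|_{\mathcal{L}(H_n)}$ finite, even though this norm may blow up in $n$) and for the Heine--Borel characterization of compactness that concludes the proof from closedness and boundedness.
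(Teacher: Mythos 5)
Your proof is correct, and the boundedness step is essentially the paper's argument: you pick approximate minimizers, collapse the double infimum in the definition of $V^n$ into one step, and invoke Lemma~\ref{lem:4.9} (equation~\eqref{eq:ztHn2}) to bound $|v|$ uniformly on $K^n(\alpha)$. Where you depart from the paper is in the closedness step. The paper simply writes ``Notice that $V^n$ is lower semi-continuous'' and treats it as a known fact (it is indeed a standard property of Freidlin--Wentzell quasi-potentials), so that only boundedness needs proof; you instead prove closedness of the level set directly by a gluing construction. Your version is more self-contained: the choice $\delta_k = \sqrt{|v-v_k|}$ correctly balances the ``kinetic'' cost $|v-v_k|^2/\delta_k$ of the linear bridge against the ``drift'' cost $C(n,\alpha)\delta_k$ contributed by $A_n\tilde z_k + F_n(\tilde z_k)$ (the latter being controlled because $|A_n|_{\mathcal{L}(H_n)}$ and $|Q_n^{-1/2}|_{\mathcal{L}(H_n)}$ are finite in dimension $n$, and $F$ is Lipschitz with $F(0)=0$), and the additivity of the time-integral rate function under concatenation of absolutely continuous paths closes the argument. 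One small imprecision: you justify the uniform bound on $|\tilde z_k(t)|$ by saying ``$v_k, v \in K^n(\alpha)$ eventually,'' but $v\in K^n(\alpha)$ is precisely what you are proving; the bound $|v|\le R_\alpha$ instead follows by passing to the limit in $|v_k|\le R_\alpha$. This does not affect the validity of the argument.
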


\begin{proof}
      Notice that $V^{n}$ is lower semi-continuous and $H_{n}$ is a finite dimensional space. In order to verify $V^{n}$ being a good rate function, it suffices to show that $K^{n}(\alpha)$ is bounded. For any $y \in K^{n}(\alpha)$, the definition of $V^{n}(y)$ implies that for any $\kappa > 0$ there exists $T_{\kappa} > 0, z_{\kappa} \in C([0,T_{\kappa}];H_{n}), z_{\kappa}(0) = 0, z_{\kappa}(T_{\kappa}) = y$ such that
      \begin{equation*}
            I_{0,T_{\kappa}}^{n,0}(z_{\kappa})
            \leq
            V^{n}(y) + \kappa
            \leq
            \alpha + \kappa.
      \end{equation*}
      The definition of $I_{0,T_{\kappa}}^{n,0}(z_{\kappa})$ means that there exists $\psi_{\kappa} \in L^{2}(0,T_{\kappa};H_{n})$ with $z_{0,0}^{n,\psi_{\kappa}} = z_{\kappa}$ such that
      \begin{equation*}
            \frac{1}{2}|\psi_{\kappa}|_{L^{2}(0,T_{\kappa};H)}^{2}
            \leq
            I_{0,T_{\kappa}}^{n,0}(z_{\kappa}) + \kappa
            \leq
            \alpha + 2\kappa.
      \end{equation*}
      Because of $z_{0,0}^{n,\psi_{\kappa}} = z_{\kappa}$ satisfying \eqref{eq:diffztdifft}, we use Lemma \ref{lem:4.9} to get
      \begin{equation*}
            |y|^{2}
            =
            |z_{\kappa}(T_{\kappa})|^{2}
            \leq
            C\big(1
            +
            |Q^{\frac12}|_{\mathcal{L}(H)}^{2}
            |\psi_{\kappa}|_{L^{2}(0,T_{\kappa};H)}^{2}\big)
            \leq
            C\big(1
            +
            2(\alpha+2\kappa)
            |Q^{\frac12}|_{\mathcal{L}(H)}^{2}\big).
      \end{equation*}
      Thus we complete the proof.
\end{proof}

\subsubsection{Lower bound estimate for the LDP of $\{\mu^{\varepsilon,n}\}_{\varepsilon > 0}$}

\begin{Theorem}
\label{th:semiinvariantmeasurelowerbounded}
      Suppose that Assumptions \ref{ass:AQ}, \ref{ass:F} and \ref{ass:LFleqlambda1} hold. Then for any $n \in \N^{+}$, $\bar{y} \in H_{n}$, $\delta > 0$ and $\gamma > 0$, there exists $\varepsilon_{0} > 0$ such that
      \begin{equation}\label{eq:muvnyHndelta}
            \mu^{\varepsilon,n}(\{y \in H_{n} : |y-\bar{y}| < \delta\})
            \geq
            \exp\Big(-\frac{V^{n}(\bar{y})+\gamma}{\varepsilon^{2}}\Big),
            \quad \varepsilon \leq \varepsilon_{0}.
      \end{equation}
\end{Theorem}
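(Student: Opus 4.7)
The plan is to follow the standard Freidlin--Wentzell strategy (cf.\ \cite{sowers1992largeinvariant,cerrai2005largeinvariant}): combine the invariance of $\mu^{\varepsilon,n}$ with a pointwise lower bound on the transition probabilities $\P(X^{\varepsilon,n}_y(T_{\ast}) \in B_{\delta}(\bar{y}))$ that is \emph{uniform in} $y$ over a ball $\{|y|\le K\}$, and close the argument by a uniform-in-$\varepsilon$ tightness estimate for $\{\mu^{\varepsilon,n}\}_{\varepsilon>0}$ that allows $K$ to be chosen so that $\mu^{\varepsilon,n}(\{|y|\le K\})\ge \tfrac12$ for all small $\varepsilon$.

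First, I would select a near-optimal path at cost roughly $V^{n}(\bar{y})$: by \eqref{eq:Vny} there exist $T>0$ and $\bar{z}\in C([0,T];H_{n})$ with $\bar{z}(0)=0$, $\bar{z}(T)=\bar{y}$ and $I_{0,T}^{n,0}(\bar{z})\le V^{n}(\bar{y})+\gamma/4$, with associated control $\bar{\psi}\in L^{2}(0,T;H_{n})$. Next, I would use the dissipativity provided by Assumption \ref{ass:LFleqlambda1} and \eqref{eq:dissipativecond}: the zero-control skeleton flow $z_{0,y}^{n,0}$ (i.e.\ the deterministic semiflow) satisfies $|z_{0,y}^{n,0}(t)|\le e^{-ct}|y|$, so for any $K>0$ and $\eta>0$ there is $T_{1}=T_{1}(K,\eta)$ with $|z_{0,y}^{n,0}(T_{1})|<\eta$ uniformly on $\{|y|\le K\}$. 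Define a concatenated path $\tilde{z}_{y}\in \mathcal{H}_{1}^{n,y}(T_{1}+T)$ by running the zero-control flow on $[0,T_{1}]$ (incurring zero cost) and then running the skeleton equation on $[T_{1},T_{1}+T]$ with control $\bar{\psi}(\cdot-T_{1})$ started from $z_{0,y}^{n,0}(T_{1})$. Continuous dependence of the skeleton equation on its initial datum (an easy Gronwall argument using \eqref{eq:globalLF}) gives $|\tilde{z}_{y}(T_{1}+T)-\bar{y}|\le C\eta\, e^{L_{F}T}$, so choosing $\eta$ small enough ensures $|\tilde{z}_{y}(T_{1}+T)-\bar{y}|<\delta/2$ for every $|y|\le K$, while the rate is still controlled by
\[
I_{0,T_{1}+T}^{n,y}(\tilde{z}_{y})\;\le\;\tfrac12|\bar{\psi}|_{L^{2}(0,T;H)}^{2}\;\le\;V^{n}(\bar{y})+\tfrac{\gamma}{4}.
\]

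With the paths $\{\tilde z_y\}_{|y|\le K}$ in hand, I would invoke the uniform sample-path lower bound \eqref{eq:semisoluldplower} from Theorem \ref{thm:semisolutionLDP} on $[0,T_{1}+T]$ with level $\alpha=V^{n}(\bar{y})+\gamma/4$, tolerance $\delta/2$ and slack $\gamma/4$. This yields an $\varepsilon_{0}>0$, independent of $y\in\{|y|\le K\}$, such that
\[
\P\bigl(|X_{y}^{\varepsilon,n}(T_{1}+T)-\bar{y}|<\delta\bigr)\;\ge\;\P\bigl(|X_{y}^{\varepsilon,n}-\tilde{z}_{y}|_{C([0,T_{1}+T];H)}<\delta/2\bigr)\;\ge\;\exp\!\Bigl(-\tfrac{V^{n}(\bar{y})+\gamma/2}{\varepsilon^{2}}\Bigr).
\]
By invariance of $\mu^{\varepsilon,n}$,
\[
\mu^{\varepsilon,n}\bigl(\{|y-\bar{y}|<\delta\}\bigr)
=\int_{H_{n}}\P\bigl(X_{y}^{\varepsilon,n}(T_{1}+T)\in B_{\delta}(\bar{y})\bigr)\,\mu^{\varepsilon,n}(\dif{y})
\ge \mu^{\varepsilon,n}(\{|y|\le K\})\,\exp\!\Bigl(-\tfrac{V^{n}(\bar{y})+\gamma/2}{\varepsilon^{2}}\Bigr).
\]

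The final ingredient is to show that $K$ can be chosen so that $\mu^{\varepsilon,n}(\{|y|\le K\})\ge 1/2$ uniformly in $\varepsilon\in(0,1]$. Applying Itô's formula to $|X_{0}^{\varepsilon,n}(t)|^{2}$, using \eqref{eq:dissipativecond} and Assumption \ref{ass:AQ}, yields a moment bound of the form $\E|X_{0}^{\varepsilon,n}(t)|^{2}\le C_{1}+C_{2}\varepsilon^{2}$ uniformly in $t\ge 0$; integrating in time and passing to the limit through the Krylov--Bogoliubov averages $\mu_{t_{i}}^{\varepsilon,n}$ gives $\int_{H_{n}}|y|^{2}\,\mu^{\varepsilon,n}(\dif{y})\le C$ uniformly in $\varepsilon\le 1$. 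Chebyshev's inequality then fixes such a $K$. Combining with the previous display and absorbing the prefactor $\tfrac12$ into $\exp(-\gamma/(2\varepsilon^{2}))$ for $\varepsilon$ small completes the proof.

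The main technical obstacle is Step~3--4: constructing the concatenated path $\tilde{z}_{y}$ so that its rate is controlled \emph{uniformly} over the ball $\{|y|\le K\}$ (this hinges on using the zero control on $[0,T_{1}]$, whose cost vanishes, together with Lipschitz dependence of the skeleton map on initial data), and simultaneously ensuring that all these paths lie in a single compact level set so that the uniform lower bound in Theorem \ref{thm:semisolutionLDP} can be applied. The uniform-in-$\varepsilon$ tightness of $\{\mu^{\varepsilon,n}\}$, though standard, is the other point that requires genuine use of Assumption \ref{ass:LFleqlambda1}.
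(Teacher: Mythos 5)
Your proposal is correct and follows the same strategy as the paper: select a near-optimal control $\bar\psi$ steering $0$ to $\bar y$ at cost $\le V^n(\bar y)+\gamma/4$; prepend the zero control so that dissipativity drives every $y$ with $|y|\le K$ into a small neighborhood of $0$ before running $\bar\psi$; bound $|\tilde z_y(T_1+T)-\bar y|$ by a Gronwall comparison with $z_{0,0}^{n,\bar\varphi}$; invoke the uniform lower bound \eqref{eq:semisoluldplower} on the ball $\{|y|\le K\}$ and conclude via invariance. The only (immaterial) deviation is at the end: you propose a uniform second-moment bound $\int|y|^2\,\mu^{\varepsilon,n}(\dif y)\le C$ via Itô and Chebyshev, whereas the paper writes $|X_0^{\varepsilon,n}(t)|\le\varepsilon\,\tfrac{L+c}{c}\,|\Gamma^n|_{C([0,t];H)}$ and shows $\mu^{\varepsilon,n}(\{|y|\le\bar K\})\to 1$ as $\varepsilon\to 0$ — both suffice to absorb the prefactor.
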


\begin{proof}
      Without loss of generality, we assume  $V^{n}(\bar{y}) < +\infty$, otherwise \eqref{eq:muvnyHndelta} obviously holds. For any $\bar{y} \in H_{n}$ and $\gamma > 0$, the definition of $V^{n}(\bar{y})$ implies that there exists $\bar{T} > 0$, $\bar{z} \in C([0,\bar{T}];H_{n})$ with $\bar{z}(0) = 0$ and $\bar{z}(\bar{T}) = \bar{y}$ such that
      $
            I_{0,\bar{T}}^{n,0}(\bar{z}) \leq V^{n}(\bar{y}) + \frac{\gamma}{4}.
      $
      The definition of $I_{0,\bar{T}}^{n,0}(\bar{z})$
      further implies that there exists $\bar{\varphi} \in L^{2}(0,\bar{T};H_{n})$ with $z_{0,0}^{n,\bar{\varphi}} = \bar{z}$ such that
      $$
            \frac{1}{2}|\bar{\varphi}|_{L^{2}(0,\bar{T};H)}^{2}
            \leq
            I_{0,\bar{T}}^{n,0}(\bar{z}) + \frac{\gamma}{4}
            \leq
            V^{n}(\bar{y}) + \frac{\gamma}{2}.
      $$
      For any $T > \bar{T}$, $K > 0$ and $y \in H_{n}$ with $|y| \leq K$, we define
      $$
            \tilde{\varphi}(t)
            :=
            \left\{
            \begin{array}{ll}
              0, & t \in [0,T-\bar{T}], \\
              \bar{\varphi}(t-(T-\bar{T})), & t \in [T-\bar{T},T]
            \end{array}
            \right.
      $$
      and
      $$
            \tilde{z}(t)
            :=
            z_{0,y}^{n,\tilde{\varphi}}(t)
            =
            \left\{
            \begin{array}{ll}
              z_{0,y}^{n,0}(t), &  t \in [0,T-\bar{T}], \\
              z_{T-\bar{T},z_{0,y}^{n,0}(T-\bar{T})}^{n,\bar{\varphi}(\cdot-(T-\bar{T}))}(t),
              & t \in [T-\bar{T},T].
            \end{array}
            \right.
      $$
      Then 
      we have
      $|\tilde{\varphi}|_{L^{2}(0,T;H)}^{2}
       =
       |\bar{\varphi}|_{L^{2}(0,\bar{T};H)}^{2}$
      ,
      $\tilde{z} \in C([0,T];H_{n})$ and
      \begin{equation}\label{eq:I0Tnytildez}
      \begin{split}
            I_{0,T}^{n,y}(\tilde{z})
            =&
            I_{0,T}^{n,y}(z_{0,y}^{n,\tilde{\varphi}})
            =
            \frac{1}{2}\inf\{|\varphi|_{L^{2}(0,T;H)}^{2}:
            \varphi \in L^{2}(0,T;H_{n}), z_{0,y}^{n,\varphi} = \tilde{z}\}
            \\\leq&
            \frac{1}{2}|\tilde{\varphi}|_{L^{2}(0,T;H)}^{2}
            =
            \frac{1}{2}|\bar{\varphi}|_{L^{2}(0,\bar{T};H)}^{2}
            \leq
            V^{n}(\bar{y}) + \frac{\gamma}{2}.
      \end{split}
      \end{equation}
      In view of
      \begin{equation*}
      \begin{split}
            \tilde{z}(t)
            =&
            E_{n}(t-(T-\bar{T}))z_{0,y}^{n,0}(T-\bar{T})
            +
            \int_{0}^{t-(T-\bar{T})} E_{n}(t-(T-\bar{T})-s)
            F_{n}(\tilde{z}(s+T-\bar{T})) \diff{s}
            \\&+
            \int_{0}^{t-(T-\bar{T})} E_{n}(t-(T-\bar{T})-s)
            Q_{n}^{\frac12} \bar{\varphi}(s) \diff{s},
            \quad t \in [T-\bar{T},T],
      \end{split}
      \end{equation*}
      we set $\hat{z}(t) :=  \tilde{z}(t+(T-\bar{T})), t \in [0,\bar{T}]$ to get
      \begin{equation*}
      \begin{split}
            \hat{z}(t)
            =&
            E_{n}(t)z_{0,y}^{n,0}(T-\bar{T})
            +
            \int_{0}^{t} E_{n}(t-s) F_{n}(\hat{z}(s)) \diff{s}
            +
            \int_{0}^{t} E_{n}(t-s)Q_{n}^{\frac12}\bar{\varphi}(s) \diff{s},
            \quad t \in [0,\bar{T}]
      \end{split}
      \end{equation*}
      and consequently
      \begin{equation*}
            \hat{z}(t)-z_{0,0}^{n,\bar{\varphi}}(t)
            =
            E_{n}(t)z_{0,y}^{n,0}(T-\bar{T})
            +
            \int_{0}^{t} E_{n}(t-s)
                \big( F_{n}(\hat{z}(s)) - F_{n}(z_{0,0}^{n,\bar{\varphi}}(s)) \big)
            \diff{s},
            \quad t \in [0,\bar{T}].
      \end{equation*}
      Using $|E_{n}(t)|_{\mathcal{L}(H_{n})} \leq e^{-\lambda_{1}t}$ for all $t \geq 0$, \eqref{eq:globalLF} and the Gronwall inequality yields
      \begin{equation}\label{eq:hatztz00nbar}
      \begin{split}
            |\hat{z}(t)-z_{0,0}^{n,\bar{\varphi}}(t)|
            \leq&
            |z_{0,y}^{n,0}(T-\bar{T})| e^{L\bar{T}},
            \quad t \in [0,\bar{T}].
      \end{split}
      \end{equation}
      As $\{z_{0,y}^{n,0}(t)\}_{t \in [0,T-\bar{T}]}$ is the solution of
      \begin{equation*}
            \frac{\diff{z_{0,y}^{n,0}(t)}}{\diff{t}}
            =
            A_{n}z_{0,y}^{n,0}(t)
            +
            F_{n}(z_{0,y}^{n,0}(t)),
            \quad t \in (0,T-\bar{T}],
            \quad z_{0,y}^{n,0}(0) = y,
      \end{equation*}
      we use \eqref{eq:dissipativecond}
      to get
      \begin{equation*}
      \begin{split}
            \frac{\diff{e^{2ct}|z_{0,y}^{n,0}(t)|^{2}}}{\diff{t}}
            =&
            2ce^{2ct}|z_{0,y}^{n,0}(t)|^{2}
            +
            2e^{2ct}\big\langle
            Az_{0,y}^{n,0}(t) + F(z_{0,y}^{n,0}(t)),
            z_{0,y}^{n,0}(t)\big\rangle
            \leq
            0,
      \end{split}
      \end{equation*}
      which yields $|z_{0,y}^{n,0}(t)| \leq e^{-ct}|y|$ for all $t \in [0,T-\bar{T}]$ and thus
      $\lim\limits_{T \to \infty}
      \sup\limits_{|y|_{H_{n}} \leq K} |z_{0,y}^{n,0}(T-\bar{T})| = 0$.
      Noting \eqref{eq:hatztz00nbar}, $\hat{z}(\bar{T}) = \tilde{z}(T) = z_{0,y}^{n,\tilde{\varphi}}(T)$ and $z_{0,0}^{n,\bar{\varphi}}(\bar{T}) = \bar{z}(\bar{T}) = \bar{y}$, there exists $\tilde{T} > \bar{T}$ such that
      $$
            \sup_{|y| \leq K}
            |z_{0,y}^{n,\tilde{\varphi}}(\tilde{T})-\bar{y}|
            \leq
            \frac{\delta}{2}.
      $$
       This together with the invariance of $\mu^{\varepsilon,n}$ implies
      \begin{equation*}
      \begin{split}
            \mu^{\varepsilon,n}(\{y \in H_{n} : |y-&\bar{y}| < \delta\})
            =
            \int_{H_{n}} \P(|X_{y}^{\varepsilon,n}(\tilde{T})-\bar{y}| < \delta)
            \,\mu^{\varepsilon,n}(\dif{y})
            \\\geq&
            \int_{{|y| \leq K}} \P(|X_{y}^{\varepsilon,n}(\tilde{T})-\bar{y}| < \delta)
            \,\mu^{\varepsilon,n}(\dif{y})
            \\\geq&
            \int_{{|y| \leq K}} \P(|X_{y}^{\varepsilon,n}(\tilde{T})
            - z_{0,y}^{n,\tilde{\varphi}}(\tilde{T})| < \delta/2)
            \,\mu^{\varepsilon,n}(\dif{y})
            \\\geq&
            \int_{{|y| \leq K}}
                \P(|X_{y}^{\varepsilon,n}
                -z_{0,y}^{n,\tilde{\varphi}}|_{C([0,\tilde{T}];H)}
                < \delta/2)
            \,\mu^{\varepsilon,n}(\dif{y}).
      \end{split}
      \end{equation*}
      By \eqref{eq:semisoluldplower} and \eqref{eq:I0Tnytildez}, there exists $\varepsilon_{1} > 0$ such that
      \begin{equation}\label{eq:muvnyHnybary}
      \begin{split}
            \mu^{\varepsilon,n}(\{y \in H_{n} : |y-\bar{y}| < \delta\})
            \geq&~
            \int_{{|y| \leq K}}
                \exp\Big(-\frac{I_{0,\tilde{T}}^{n,y}(z_{0,y}^{n,\tilde{\varphi}})+\gamma/2}
                    {\varepsilon^{2}}\Big)
            \,\mu^{\varepsilon,n}(\dif{y})
            \\\geq&~
            \mu^{\varepsilon,n}({|y| \leq K})
            \exp\Big(-\frac{V^{n}(\bar{y})+\gamma}{\varepsilon^{2}}\Big),
            \quad \varepsilon \leq \varepsilon_{1}.
      \end{split}
      \end{equation}
      It remains to estimate $\mu^{\varepsilon,n}({|y| \leq K})$. Using the definition of $\mu^{\varepsilon,n}$ yields
      \begin{equation}\label{eq:muvarepsilonnyHnK}
            \mu^{\varepsilon,n}(|y| > K)
            =
            \lim_{i \to \infty}
            \frac{1}{t_{i}} \int_{0}^{t_{i}}
            \P(|X_{0}^{\varepsilon,n}(t)| > K) \diff{t}.
      \end{equation}
      Similarly to the proof of Lemma \ref{lem:4.9}, we use \eqref{eq:SPDEsemi} to obtain
      \begin{equation*}
            |X_{0}^{\varepsilon,n}(t)|
            \leq
            \varepsilon\frac{L+c}{c}
            |\Gamma^{n}|_{C([0,t];H)}
      \end{equation*}
      with
      \begin{equation}\label{eq:Gammant}
            \Gamma^{n}(t)
            :=
            \int_{0}^{t}
                E_{n}(t-s)Q_{n}^{\frac12}
            \diff{W_{n}(s)},
            \quad t \geq 0.
      \end{equation}
      This together with the Chebyshev inequality shows that for any $t \geq 0$ and $K > 0$,
      \begin{equation}\label{eq:PX0vntHnK}
      \begin{split}
            &\P(|X_{0}^{\varepsilon,n}(t)| > K)
            \leq
            \P\Big(
            |\Gamma^{n}|_{C([0,t];H)}
            >
            \frac{cK}{\varepsilon(L+c)}\Big)
            \leq
            \varepsilon^{2}\Big(\frac{L+c}{cK}\Big)^{2}C
      \end{split}
      \end{equation}
      with $C$ being independent of $t$, which is due to
      \begin{equation}\label{eq:GammanC0tHn2}
      \begin{split}
            \E\big[|\Gamma^{n}|_{C([0,t];H)}^{2}\big]
            =&~
            \E\Big[\sup_{s \in [0,t]}
            \Big|\int_{0}^{s}
                     E_{n}(s-r)Q_{n}^{\frac12}
                 \diff{W_{n}(r)}
            \Big|^{2}\Big]
            \leq
            C\int_{0}^{t}
                 |E_{n}(t-r)Q_{n}^{\frac12}\big|_{\mathcal{L}_{2}(H_{n})}^{2}
            \diff{r}
            \\\leq&~
            C|(-A_{n})^{-\frac{1}{2}}|_{\mathcal{L}(H_{n})}^{2}
            |(-A_{n})^{\frac{1}{2}}Q_{n}^{\frac12}|_{\mathcal{L}_{2}(H_{n})}^{2}
            \int_{0}^{t}
                |E_{n}(t-r)|_{\mathcal{L}(H_{n})}^{2}
            \diff{r}
            \\\leq&~
            C\int_{0}^{t}
                e^{-2\lambda_{1}(t-r)}
            \diff{r}
            \leq
            C,
            \quad t \geq 0.
      \end{split}
      \end{equation}
      By setting $\bar{K} > 0$, \eqref{eq:muvarepsilonnyHnK} and \eqref{eq:PX0vntHnK} lead to
      \begin{equation*}
            \lim_{\varepsilon \to 0}
            \mu^{\varepsilon,n}(|y| > \bar{K})
            \leq
            \lim_{\varepsilon \to 0}
            \sup_{t \geq 0}\P(|X_{0}^{\varepsilon,n}(t)| > \bar{K})
            \leq
            0
      \end{equation*}
      and consequently
      $
            \lim\limits_{\varepsilon \to 0}
            \mu^{\varepsilon,n}({|y| \leq \bar{K}})
            =
            1
      $.
      This in combination with \eqref{eq:muvnyHnybary} shows that there exists sufficiently small $\varepsilon_{0} \leq \varepsilon_{1}$  such that \eqref{eq:muvnyHndelta} holds. Thus we complete the proof.
\end{proof}

\subsubsection{Upper bound estimate for the LDP of $\{\mu^{\varepsilon,n}\}_{\varepsilon > 0}$}

The following lemma gives the exponential tail estimate for invariant measure $\mu^{\varepsilon,n}$ based on the Fernique theorem.

\begin{Lemma}\label{lem:stochasticevolutiontailestimate}
      Suppose that Assumptions \ref{ass:AQ}, \ref{ass:F} and \ref{ass:LFleqlambda1} hold. Then for any $\alpha \geq 0$, there exists $
      \bar{K} > 0$ such that
      \begin{equation}\label{eq:327327}
            \mu^{\varepsilon,n}(\{u \in H_{n} : |u| > \bar{K}\})
            \leq
            \exp\Big(-\frac{\alpha}{\varepsilon^{2}}\Big),
            \quad \varepsilon \leq 1.
      \end{equation}
\end{Lemma}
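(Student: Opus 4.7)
The plan is to reduce the invariant-measure tail to a uniform-in-$t$ pointwise bound on $X_{0}^{\varepsilon,n}$ via the Krylov--Bogoliubov construction, and then to obtain the latter from a Gaussian tail estimate on the stochastic convolution $\Gamma^{n}$ defined in \eqref{eq:Gammant}. Since $\{|u|\ge\bar K\}$ is closed in $H_{n}$ and $\mu^{\varepsilon,n}$ is the weak limit of the time averages $\mu_{t_{i}}^{\varepsilon,n}$, the Portmanteau theorem yields
$$\mu^{\varepsilon,n}(\{|u|>\bar K\}) \le \mu^{\varepsilon,n}(\{|u|\ge\bar K\}) \le \limsup_{i\to\infty}\frac{1}{t_{i}}\int_{0}^{t_{i}}\P(|X_{0}^{\varepsilon,n}(s)|\ge\bar K)\diff{s} \le \sup_{t\ge 0}\P(|X_{0}^{\varepsilon,n}(t)|\ge\bar K).$$

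For the pointwise estimate I would recycle the dissipativity argument from the proof of Theorem \ref{th:semiinvariantmeasurelowerbounded}. Setting $\bar X:=X_{0}^{\varepsilon,n}-\varepsilon\Gamma^{n}$, one has $\bar X(0)=0$ and $\frac{\diff \bar X(t)}{\diff t}=A_{n}\bar X(t)+F_{n}(\bar X(t)+\varepsilon\Gamma^{n}(t))$. The dissipativity \eqref{eq:dissipativecond}, the Lipschitz bound \eqref{eq:globalLF}, and the weighted Young inequality (with parameter $c$) give $\frac{\diff (e^{ct}|\bar X(t)|^{2})}{\diff t}\le \frac{L_{F}^{2}\varepsilon^{2}}{c}e^{ct}|\Gamma^{n}(t)|^{2}$, so that
$$|X_{0}^{\varepsilon,n}(t)| \le \varepsilon\frac{L_{F}+c}{c}|\Gamma^{n}|_{C([0,t];H)},\qquad t\ge 0.$$
Now $\Gamma^{n}$ is a centered Gaussian random element of $C([0,t];H_{n})$, and \eqref{eq:GammanC0tHn2} already provides $\sup_{t\ge 0}\E[|\Gamma^{n}|_{C([0,t];H)}^{2}]\le C$. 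A quantitative form of the Fernique theorem, in which the exponential-moment constant depends only on the second moment, then furnishes $\lambda_{0},C_{0}>0$, independent of $t$, with $\sup_{t\ge 0}\E[\exp(\lambda_{0}|\Gamma^{n}|_{C([0,t];H)}^{2})]\le C_{0}$; the Markov inequality in turn yields $\P(|\Gamma^{n}|_{C([0,t];H)}>R)\le C_{0}\exp(-\lambda_{0}R^{2})$ uniformly in $t$. Choosing $R=c\bar K/(\varepsilon(L_{F}+c))$ and combining the three estimates gives
$$\mu^{\varepsilon,n}(\{|u|>\bar K\}) \le C_{0}\exp\Big(-\frac{\lambda_{0}c^{2}\bar K^{2}}{(L_{F}+c)^{2}\varepsilon^{2}}\Big),$$
so for $\varepsilon\le 1$ it suffices to pick $\bar K$ large enough that $\lambda_{0}c^{2}\bar K^{2}/(L_{F}+c)^{2}\ge\alpha+\max\{\log C_{0},0\}$.

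The main obstacle is securing the exponential-moment bound \emph{uniformly} in $t$, since the classical Fernique statement applied to a single Gaussian variable does not itself deliver $t$-independent constants. The resolution is to invoke a quantitative Fernique estimate (equivalently, the Borell--TIS inequality applied to the supremum of the centered Gaussian process $\{\langle\Gamma^{n}(s),\phi\rangle : s\in[0,t],|\phi|\le 1\}$), using only a uniform bound on the second moment. That uniform bound is in turn guaranteed by the exponential stability $|E_{n}(t)|_{\mathcal{L}(H_{n})}\le e^{-\lambda_{1}t}$ together with Assumption \ref{ass:AQ}, as already exploited in \eqref{eq:GammanC0tHn2}. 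Once the uniform-in-$t$ Gaussian tail is in hand, the remainder of the proof is elementary.
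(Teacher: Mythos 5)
Your proposal is correct and follows the same overall skeleton as the paper's proof: reduce the invariant-measure tail to a uniform-in-$t$ pointwise bound on $X_0^{\varepsilon,n}$ via the Krylov--Bogoliubov construction, then control the latter by a dissipativity argument that transfers everything to the stochastic convolution $\Gamma^n$. The difference lies in how the Gaussian tail of $\Gamma^n$ is produced. The paper treats, at each fixed time $t$, the single $H_n$-valued Gaussian vector $\Gamma^n(t)\sim\mathcal{N}\big(0,\int_0^t E_n(t-r)Q_nE_n(t-r)\diff{r}\big)$ whose covariance is diagonal in the basis $\{e_1,\dots,e_n\}$ with eigenvalues $\frac{q_i}{2\lambda_i}(1-e^{-2\lambda_i t})$; Fernique's theorem then gives the \emph{explicit}, $t$-uniform bound $\E[\exp(\kappa|\Gamma^n(t)|^2)]\le\big(\prod_{i=1}^n(1-\kappa q_i/\lambda_i)\big)^{-1/2}$ for any $\kappa<\min_i\lambda_i/q_i$, and the choice of $\bar K$ can be written in closed form. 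You instead work with the $C([0,t];H_n)$-valued supremum $|\Gamma^n|_{C([0,t];H)}$ and invoke a quantitative Fernique/Borell--TIS estimate, using \eqref{eq:GammanC0tHn2} to get uniform bounds on the mean and weak variance. Both routes are valid; the paper's explicit eigenvalue computation exploits finite dimensionality and yields concrete constants, whereas your route is more robust and, arguably, more faithful to what the dissipativity Gronwall actually delivers: the pathwise bound naturally comes out as $|X_0^{\varepsilon,n}(t)|\le\varepsilon\frac{L_F+c}{c}\,|\Gamma^n|_{C([0,t];H)}$ (a supremum on $[0,t]$, as the paper itself uses in the lower-bound theorem), not a bound in terms of $|\Gamma^n(t)|$ alone, so your version of the step is cleaner. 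Your Portmanteau argument for the time-average limit is also slightly more careful than the paper's equality, which only needs the $\le$ for closed sets in any case.
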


\begin{proof}
      For any $K > 0$, it follows from the definition of $\mu^{\varepsilon,n}$ that
      \begin{equation}\label{eq:mvnHnyHnK}
            \mu^{\varepsilon,n}(\{u \in H_{n} : |u| > K\})
            =
            \lim_{i \to \infty}\frac{1}{t_{i}}
                \int_{0}^{t_{i}} \P(|X_{0}^{\varepsilon,n}(t)| > K) \diff{t}.
      \end{equation}
      Similarly to the proof of Lemma \ref{lem:4.9}, one can
      show that there exists $C > 0$ independent of $t$ such that
      $
            |X_{0}^{\varepsilon,n}(t)|
            \leq
            \varepsilon C |\Gamma^{n}(t)|,
      $
      where $\Gamma^{n}(t)$ is defined by \eqref{eq:Gammant}.
      Thus for any $K > 0$ and $\kappa > 0$, we use the Markov inequality to get
      \begin{equation}\label{eq:PBigsup0leqsleqt}
      \begin{split}
            \P(|X_{0}^{\varepsilon,n}(t)| > K)
            \leq&
            \P\Big(|\Gamma^{n}(t)|
            > \frac{K}{\varepsilon C}\Big)
            \\=&
            \P\Big(
            \exp\big(\kappa|\Gamma^{n}(t)|^{2}\big)
            >
            \exp\Big(\kappa\Big(\frac{K}{\varepsilon C}\Big)^{2}\Big)\Big)
            \\\leq&
            \exp\Big(-\kappa\Big(\frac{K}{\varepsilon C}\Big)^{2}\Big)
            \E\big[\exp\big(\kappa|\Gamma^{n}(t)|^{2}\big)\big],
            \quad t \geq 0.
      \end{split}
      \end{equation}
      By \cite[Proposition 4.28]{da2014stochastic},
      \begin{equation*}
            \Gamma^{n}(t)
            \sim
            \mathcal{N}\Big(0,\int_{0}^{t} E_{n}(t-r)Q_{n}E_{n}(t-r)\diff{r}\Big)
      \end{equation*}
      and for any $i = 1,\ldots,n$,
      \begin{equation*}
            \Big(\int_{0}^{t} E_{n}(t-r)Q_{n}E_{n}(t-r)\diff{r}\Big)e_{i}
            =
            \Big(\int_{0}^{t} e^{-2\lambda_{i}(t-r)}q_{i}\diff{r}\Big)e_{i}
            =
            \frac{q_{i}}{2\lambda_{i}}
            \big(1 - e^{-2\lambda_{i}t}\big)e_{i}.
      \end{equation*}
      Applying Fernique's theorem (\cite[Proposition 1.13]{da2006introduction}) yields that for any $\kappa < \min\big\{\frac{\lambda_{i}}{q_{i}}:i =1, \ldots,n\big\}$,
      \begin{equation}\label{eq:EBigexpBigkappa}
      \begin{split}
            &
            \E\Big[
            \exp\big(\kappa|\Gamma^{n}(t)|^{2}\big)
            \Big]
            =
            \Big(\prod_{i=1}^{n}
            \Big(1 - \kappa \frac{q_{i}}{\lambda_{i}}
            \big(1 - e^{-2\lambda_{i}t}\big) \Big)\Big)^{-\frac{1}{2}}
            \leq
            \Big(\prod_{i=1}^{n}
            \Big(1 - \kappa \frac{q_{i}}{\lambda_{i}}\Big)\Big)^{-\frac{1}{2}},
            \quad t \geq 0.
      \end{split}
      \end{equation}
      Combining \eqref{eq:mvnHnyHnK}, \eqref{eq:PBigsup0leqsleqt} and \eqref{eq:EBigexpBigkappa} implies that one can choose
      \begin{equation*}
            \bar{K}
            \geq
            C\sqrt{\frac{1}{\kappa}\Big(\alpha
            + \varepsilon^{2}\ln \Big(\prod_{i=1}^{n}
            \Big(1 - \kappa \frac{q_{i}}{\lambda_{i}}\Big)\Big)^{-\frac{1}{2}}\Big)}
      \end{equation*}
      such that \eqref{eq:327327} holds. Thus we complete the proof.
\end{proof}

For any $T, L > 0$, we define
\begin{equation}\label{eq:K0TnLalpha}
      K_{0,T}^{n,L}(\alpha)
      :=
      \{z \in C([0,T];H_{n}) :
      |z(0)| \leq L,I_{0,T}^{n,z(0)}(z) \leq \alpha\}.
\end{equation}
Similarly to the proof of \cite[Lemma 7.1]{cerrai2005largeinvariant}, we have the following lemma.

\begin{Lemma}\label{lem:zTzK0TnLalpha}
      Suppose that Assumptions \ref{ass:AQ}, \ref{ass:F} and \ref{ass:LFleqlambda1} hold. Then for any $\alpha \geq 0$ and $\delta > 0$, there exist $\bar{L} > 0$ and $\bar{T} > 0$ such that
      $$
            \big\{z(t): z \in K_{0,t}^{n,\bar{L}}(\alpha)\big\}
            \subset
            \Big\{y \in H_{n}:\rho^{H}(y,K^{n}(\alpha)) < \frac{\delta}{2}\Big\},
            \quad t \geq \bar{T},
      $$
      where $\rho^{H}(u,U) := \inf\limits_{u' \in U}|u-u'|, u \in H, U \subset H$.
\end{Lemma}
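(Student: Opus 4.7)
The plan is to compare each path $z\in K_{0,t}^{n,\bar L}(\alpha)$ with an auxiliary path $\tilde z$ driven by the same control but starting from $0$, and then to invoke the dissipativity of the skeleton dynamics \eqref{eq:SPDEsemiskeletoneq} to control the distance between the two endpoints exponentially in $t$. This will allow us to exhibit, for every such $z$, an explicit point in $K^{n}(\alpha)$ lying within $\delta/2$ of $z(t)$.

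First I would fix $z\in K_{0,t}^{n,\bar L}(\alpha)$ and set $y:=z(0)$ with $|y|\leq\bar L$. Since $Q_{n}^{1/2}\colon H_{n}\to H_{n}$ is bijective, the infimum in \eqref{eq:SPDEsemiratefun} is attained at the unique control
\begin{equation*}
      \psi(s)=Q_{n}^{-\frac{1}{2}}\Big(\frac{\dif{z(s)}}{\dif{s}}-A_{n}z(s)-F_{n}(z(s))\Big)\in L^{2}(0,t;H_{n}),
\end{equation*}
with $\tfrac{1}{2}|\psi|_{L^{2}(0,t;H)}^{2}=I_{0,t}^{n,y}(z)\leq\alpha$, in line with \eqref{eq:I0Tnyz}. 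I would then define $\tilde z$ as the solution of \eqref{eq:SPDEsemiskeletoneq} driven by the same $\psi$ but with $\tilde z(0)=0$, so that $z_{0,0}^{n,\psi}=\tilde z$. By \eqref{eq:SPDEsemiratefun}, $I_{0,t}^{n,0}(\tilde z)\leq\tfrac{1}{2}|\psi|_{L^{2}(0,t;H)}^{2}\leq\alpha$, and hence the definition \eqref{eq:Vny} of $V^{n}$ gives $V^{n}(\tilde z(t))\leq\alpha$, i.e., $\tilde z(t)\in K^{n}(\alpha)$.

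The heart of the argument is a dissipativity estimate on the difference $r:=z-\tilde z$, which satisfies $\dif r/\dif s=A_{n}r+F_{n}(z)-F_{n}(\tilde z)$ with $r(0)=y$. Combining $\langle A_{n}u,u\rangle\leq-\lambda_{1}|u|^{2}$ on $H_{n}$ with the Lipschitz bound \eqref{eq:globalLF} yields
\begin{equation*}
      \frac{1}{2}\frac{\dif{|r(s)|^{2}}}{\dif{s}}
      \leq -\lambda_{1}|r(s)|^{2}+L_{F}|r(s)|^{2}
      =-c|r(s)|^{2},
\end{equation*}
so that $|z(t)-\tilde z(t)|\leq|y|e^{-ct}\leq\bar L\,e^{-ct}$. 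It then suffices to fix any $\bar L>0$ and choose $\bar T>0$ large enough that $\bar L\,e^{-c\bar T}<\delta/2$; for every $t\geq\bar T$ and every $z\in K_{0,t}^{n,\bar L}(\alpha)$ we obtain
\begin{equation*}
      \rho^{H}(z(t),K^{n}(\alpha))\leq|z(t)-\tilde z(t)|\leq\bar L\,e^{-ct}<\frac{\delta}{2}.
\end{equation*}

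No serious obstacle is expected: the whole argument is deterministic and is carried out inside the finite-dimensional space $H_{n}$ once $n$ is fixed. The only delicate bookkeeping point is justifying that the minimiser $\psi$ has the explicit expression above, but this is immediate from the bijectivity of $Q_{n}^{1/2}$ on $H_{n}$ together with the effective-domain characterization \eqref{eq:I0Tnyz} of $I_{0,t}^{n,y}$.
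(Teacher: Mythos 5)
Your proof is correct and follows the same dissipativity-based comparison argument that underlies \cite[Lemma~7.1]{cerrai2005largeinvariant}, which the paper simply cites for this lemma without reproducing the details. The key steps — identifying the unique $H_n$-valued control $\psi$ via the invertibility of $Q_n^{1/2}$ on $H_n$, running the skeleton flow from $0$ with the same control to land in $K^n(\alpha)$, and using $\langle A_n u,u\rangle \leq -\lambda_1|u|^2$ together with the Lipschitz bound on $F$ to get the exponential contraction $|z(t)-\tilde z(t)|\leq \bar L e^{-ct}$ with $c=\lambda_1-L_F>0$ — are exactly the ingredients of the referenced argument, and your quantifier bookkeeping (any $\bar L>0$ works once $\bar T$ is chosen with $\bar L e^{-c\bar T}<\delta/2$) matches what the lemma asserts.
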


For any $K,L > 0$, $J \in \N^{+}$, we define
\begin{equation}
      H_{K,L,J}
      :=
      \{z \in C([0,J];H_{n}) : |z(0)| \leq K,
      |z(j)| > L, j = 1,2,\ldots,J \}.
\end{equation}
The proof of the following lemma is similar to that of Lemma 7.2 in \cite{cerrai2005largeinvariant}.

\begin{Lemma}\label{lem:infI0barJnyz310}
      Suppose that Assumptions \ref{ass:AQ}, \ref{ass:F} and \ref{ass:LFleqlambda1} hold. Then for any $\alpha,\delta > 0$ and $K > 0$, there exists $\bar{J} \in \N^{+}$ such that
      $$
            \inf\{I_{0,\bar{J}}^{n,z(0)}(z) : z \in H_{K,\bar{L},\bar{J}}\}
            >
            \alpha,
      $$
      where $\bar{L}$ is the constant introduced in Lemma \ref{lem:zTzK0TnLalpha} corresponding to $\alpha$ and $\delta$.
\end{Lemma}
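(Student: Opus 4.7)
The plan is to mirror the approach of \cite[Lemma 7.2]{cerrai2005largeinvariant}: the dissipativity condition \eqref{eq:dissipativecond}, combined with the pointwise constraints $|z(0)| \leq K$ and $|z(j)| > \bar L$ for $j = 1,\ldots,\bar J$, should force the $L^{2}$-norm of any admissible control to grow linearly in $\bar J$. Fix $z \in H_{K,\bar L,\bar J}$; we may assume $I_{0,\bar J}^{n,z(0)}(z) < +\infty$, so that the set of $\psi \in L^{2}(0,\bar J;H_{n})$ with $z_{0,z(0)}^{n,\psi} = z$ is nonempty. Any lower bound on $|\psi|_{L^{2}(0,\bar J;H)}^{2}$ that is valid for every such $\psi$ transfers, upon infimization, to a lower bound on $2 I_{0,\bar J}^{n,z(0)}(z)$.

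First I would differentiate $e^{ct}|z(t)|^{2}$ along the skeleton equation \eqref{eq:SPDEsemiskeletoneq}, apply \eqref{eq:dissipativecond} (using $H_{n}\subset \dot H^{2}$ and the fact that $P_{n}$ leaves inner products on $H_{n}$ invariant), and absorb the cross term $2\langle Q_{n}^{\frac{1}{2}}\psi, z\rangle$ against the dissipative gain $-2c|z|^{2}$ via the weighted Young inequality. This yields the one-step energy estimate
$$
|z(j)|^{2} \leq e^{-c}\,|z(j-1)|^{2} + \beta\, |\psi|_{L^{2}(j-1,j;H)}^{2}, \quad j = 1,\ldots,\bar J,
$$
with $\beta := |Q^{\frac{1}{2}}|_{\mathcal{L}(H)}^{2}/c$. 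Setting $a_{j} := |z(j)|^{2}$ and $b_{j} := |\psi|_{L^{2}(j-1,j;H)}^{2}$ and iterating gives $a_{j} \leq e^{-cj}a_{0} + \beta \sum_{k=1}^{j}e^{-c(j-k)}b_{k}$. Since $a_{0}\leq K^{2}$, I select $j_{0} \in \N^{+}$ depending only on $c, K, \bar L$ with $e^{-cj_{0}}K^{2} \leq \bar L^{2}/2$; the constraint $a_{j} > \bar L^{2}$ then forces $\sum_{k=1}^{j}e^{-c(j-k)}b_{k} > \bar L^{2}/(2\beta) =: \eta$ for every $j_{0}\leq j\leq \bar J$.

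Summing this inequality over $j = j_{0},\ldots,\bar J$ and swapping the order of summation (each inner geometric sum bounded by $(1-e^{-c})^{-1}$) yields
$$
|\psi|_{L^{2}(0,\bar J;H)}^{2} = \sum_{k=1}^{\bar J} b_{k} \geq (1-e^{-c})\,\eta\,(\bar J - j_{0} + 1).
$$
Choosing $\bar J$ so large that the right-hand side exceeds $2\alpha$ and infimizing over admissible $\psi$ gives $I_{0,\bar J}^{n,z(0)}(z) > \alpha$ for every $z \in H_{K,\bar L,\bar J}$, as required. The main obstacle is producing the per-step inequality with a strict contraction factor $e^{-c} < 1$: the dissipativity gain must be sharp enough to simultaneously absorb the control-induced cross term and leave a recursion whose homogeneous part decays geometrically, making the $a_{0}$ contribution negligible for $j \geq j_{0}$. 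Once this clean recursion is in hand, the remainder is a routine discrete Fubini computation and a choice of $\bar J$ depending only on $\alpha, K, \bar L, c,$ and $|Q^{\frac{1}{2}}|_{\mathcal{L}(H)}$.
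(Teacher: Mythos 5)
Your argument is correct and follows the expected route: the paper itself omits the proof and refers to Lemma 7.2 of Cerrai--R\"ockner (2005), and your energy estimate---weighting by $e^{ct}$, applying \eqref{eq:dissipativecond} (valid on $H_{n}$ because $\langle A_{n}z,z\rangle=\langle Az,z\rangle$ and $\langle F_{n}(z),z\rangle=\langle F(z),z\rangle$ for $z\in H_{n}$), absorbing the control cross term by the weighted Young inequality, and iterating the resulting contraction $a_{j}\le e^{-c}a_{j-1}+\beta b_{j}$---is precisely the mechanism behind that reference. One small presentational imprecision: with the weight $e^{ct}$ the net dissipative margin available for the Young inequality is $-c|z|^{2}$, not $-2c|z|^{2}$ (the product-rule term $+ce^{ct}|z|^{2}$ already consumes half of the $-2c e^{ct}|z|^{2}$ from \eqref{eq:dissipativecond}); your final recursion is nevertheless correct. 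The rest---choosing $j_{0}$ to kill the $e^{-cj}K^{2}$ tail, the discrete Fubini with geometric sums bounded by $(1-e^{-c})^{-1}$, and then taking $\bar{J}$ large enough that the resulting lower bound on $\tfrac12|\psi|_{L^{2}}^{2}$ exceeds $\alpha$ uniformly over admissible $\psi$ and over $z\in H_{K,\bar L,\bar J}$---is sound, and also correctly covers the case $I_{0,\bar J}^{n,z(0)}(z)=+\infty$ when no admissible $\psi$ exists.
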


Based on the above lemmas, we are in a position to show the LDP upper bound estimate.
\begin{Theorem}
\label{th:semiinvariantmeasureupperbounded}
      Suppose that Assumptions \ref{ass:AQ}, \ref{ass:F} and \ref{ass:LFleqlambda1} hold. Then for any $\alpha \geq 0$, $\delta > 0$ and $\gamma > 0$, there exists $\varepsilon_{0} > 0$ such that
      $$
            \mu^{\varepsilon,n}(\{y \in H_{n} : \rho^{H}(y,K^{n}(\alpha)) \geq \delta\})
            \leq
            \exp\Big(-\frac{\alpha+\gamma}{\varepsilon^{2}}\Big),
            \quad \varepsilon \leq \varepsilon_{0}.
      $$
\end{Theorem}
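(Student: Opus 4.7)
The plan is to execute the classical Freidlin--Wentzell strategy for invariant measures: exploit the invariance of $\mu^{\varepsilon,n}$ to express the measure of the ``bad'' set as a time average of hitting probabilities for $X_y^{\varepsilon,n}$, then reduce those probabilities to the uniform sample-path upper bound \eqref{eq:semisoluldpupper} via the three preparatory lemmas. The exponential tail estimate controls the initial distribution, Lemma \ref{lem:zTzK0TnLalpha} converts endpoint-closeness into rate-function control, and Lemma \ref{lem:infI0barJnyz310} rules out recurrent excursions to large norms.

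First I would fix $\alpha,\delta,\gamma>0$ and use the invariance of $\mu^{\varepsilon,n}$ to write, for any $\bar{J}\in\N^{+}$,
\begin{equation*}
      \mu^{\varepsilon,n}\big(\{y\in H_{n}:\rho^{H}(y,K^{n}(\alpha))\geq\delta\}\big)
      =\int_{H_{n}}
      \P\big(\rho^{H}(X_{y}^{\varepsilon,n}(\bar{J}),K^{n}(\alpha))\geq\delta\big)
      \,\mu^{\varepsilon,n}(\mathrm{d}y),
\end{equation*}
and split the integral over $\{|y|\leq\bar{K}\}$ and its complement. By Lemma \ref{lem:stochasticevolutiontailestimate} applied at a sufficiently large level (e.g.\ $\alpha+2\gamma$), the tail piece is dominated by $\tfrac{1}{2}\exp(-(\alpha+\gamma)/\varepsilon^{2})$ for small $\varepsilon$.

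Next, I would calibrate the parameters for the main piece $\{|y|\leq\bar{K}\}$: apply Lemma \ref{lem:zTzK0TnLalpha} with $(\alpha+\gamma,\delta)$ to produce $\bar{L}\geq\bar{K}$ and $\bar{T}>0$, and then apply Lemma \ref{lem:infI0barJnyz310} with $(\alpha+\gamma,\delta,\bar{K})$ to produce $\bar{J}$, which (after enlarging if necessary) we take to satisfy $\bar{J}\geq\bar{T}$. The core claim is then: uniformly in $y\in H_{n}$ with $|y|\leq\bar{K}$,
\begin{equation*}
      \P\big(\rho^{H}(X_{y}^{\varepsilon,n}(\bar{J}),K^{n}(\alpha))\geq\delta\big)
      \leq \exp\Big(-\frac{\alpha+\gamma}{\varepsilon^{2}}\Big)
\end{equation*}
for small $\varepsilon$. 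To establish this, I would decompose according to whether the path visits $\{|\cdot|\leq\bar{L}\}$ at some integer time in $\{1,\dots,\bar{J}\}$. If it never does, Lemma \ref{lem:infI0barJnyz310} forces $I_{0,\bar{J}}^{n,y}(X_{y}^{\varepsilon,n})>\alpha+\gamma$, so $X_{y}^{\varepsilon,n}\notin K_{0,\bar{J}}^{n,y}(\alpha+\gamma)$; if it does, then on the remaining time interval the path starts from a point of norm $\leq\bar{L}$ with $\bar{J}-j^{*}\geq\bar{T}$, and the contrapositive of Lemma \ref{lem:zTzK0TnLalpha} (together with the Markov property and invariance) implies that either the endpoint lies within $\delta/2$ of $K^{n}(\alpha)$ (contradicting $\rho^{H}(\cdot,K^{n}(\alpha))\geq\delta$) or the sample-path rate function exceeds $\alpha+\gamma$. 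In every case the path lies at sup-norm distance at least $\delta/2$ from $K_{0,\bar{J}}^{n,y}(\alpha+\gamma)$, and the uniform upper bound \eqref{eq:semisoluldpupper} of Theorem \ref{thm:semisolutionLDP} (applied with buffer $\gamma/2$ and the level $\alpha+\tfrac{3\gamma}{2}$, say) furnishes the desired exponential bound.

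The main obstacle I anticipate is the book-keeping needed to make the three buffers match: the separation $\delta/2$ in Lemma \ref{lem:zTzK0TnLalpha}, the strict inequality in Lemma \ref{lem:infI0barJnyz310}, and the $\gamma$-loss in \eqref{eq:semisoluldpupper} must be coordinated so that the final exponent is exactly $\alpha+\gamma$; in particular one must handle the case distinction uniformly in the starting point $y$, which is why the ``universal'' constants $\bar{L}$, $\bar{T}$, $\bar{J}$ in the lemmas, rather than $y$-dependent ones, are crucial. Once this coordination is carried out, combining the tail estimate with the main-piece bound and choosing $\varepsilon_{0}$ small enough completes the proof.
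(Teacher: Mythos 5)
Your sketch follows essentially the same Freidlin--Wentzell strategy as the paper: exploit invariance, split off the tail with Lemma \ref{lem:stochasticevolutiontailestimate}, split the rest according to whether the path returns to the ball $\{|\cdot|\le\bar L\}$ at some integer time, and close each piece with the uniform sample-path upper bound \eqref{eq:semisoluldpupper}. However, there are two concrete gaps in how you run the endgame.

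First, a bookkeeping error in the time horizon. You evaluate the invariant measure at time $\bar J$ and then assert that, when the path visits $\{|\cdot|\le\bar L\}$ at an integer time $j^*\in\{1,\dots,\bar J\}$, one has $\bar J-j^*\ge\bar T$. That fails whenever $j^*$ is close to $\bar J$; enlarging $\bar J$ does not help, since $j^*$ can grow with it. The paper's fix is to decouple the ``hitting window'' from the endpoint: the bad event is evaluated at time $t=\bar T+\bar J$, while hitting times range over $\{1,\dots,\bar J\}$, so that $t-j\ge\bar T$ holds automatically and the Markov property is applied at each such $j$ (giving the $\bar J$-fold union bound). Your version needs the same separation between the window and the terminal time.

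Second, and more seriously, \eqref{eq:semisoluldpupper} controls $\P(\rho^{C([0,T];H)}(X_y^{\varepsilon,n},K_{0,T}^{n,y}(\alpha))\ge\delta')$, a \emph{distance} from the level set, whereas in both of your cases you only establish \emph{non-membership}: ``$X_y^{\varepsilon,n}\notin K_{0,\bar J}^{n,y}(\alpha+\gamma)$'' (Case 1) and ``the sample-path rate function exceeds $\alpha+\gamma$'' (Case 2). These are not enough. The paper extracts the quantitative distance as follows: on the ``visits'' piece, for $|y'|\le\bar L$ and $t-j\ge\bar T$, Lemma \ref{lem:zTzK0TnLalpha} gives $\rho^H(z(t-j),K^n(\alpha))<\delta/2$ for every $z\in K_{0,t-j}^{n,y'}(\alpha)$, and the triangle inequality on the endpoint then forces $|X_{y'}^{\varepsilon,n}-z|_{C}\ge\delta/2$ for every such $z$, i.e.\ $\rho^{C}(X_{y'}^{\varepsilon,n},K_{0,t-j}^{n,y'}(\alpha))\ge\delta/2$; on the ``never-visits'' piece, one invokes Lemma \ref{lem:infI0barJnyz310} together with compactness of the level set to obtain a \emph{strictly positive} distance $\rho^{C([0,\bar J];H)}(H_{\bar K,\bar L,\bar J},K_{0,\bar J}^{n,y}(\alpha))>0$ (not necessarily $\delta/2$) and then applies \eqref{eq:semisoluldpupper} at that threshold. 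Your assertion ``in every case the path lies at sup-norm distance at least $\delta/2$ from $K_{0,\bar J}^{n,y}(\alpha+\gamma)$'' conflates these steps and is not justified as written.

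A final remark on the exponent: the displayed bound in the theorem statement with $\alpha+\gamma$ appears to be a sign typo for the standard $\alpha-\gamma$ (compare item (iii) of the Freidlin--Wentzell proposition in Section 2), and indeed the paper's proof concludes with $(2+\bar J)\exp(-(\alpha-\gamma/2)/\varepsilon^{2})$ and then shrinks $\varepsilon_0$. Your attempt to reach $\alpha+\gamma$ by applying Lemmas \ref{lem:zTzK0TnLalpha}--\ref{lem:infI0barJnyz310} at level $\alpha+\gamma$ does not go through, because the target set in the theorem is still $K^n(\alpha)$ and the inclusion $K^n(\alpha)\subset K^n(\alpha+\gamma)$ makes the containment in Lemma \ref{lem:zTzK0TnLalpha} point the wrong way for the triangle-inequality step. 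Apply the lemmas at level $\alpha$ (as the paper does) and accept the $\alpha-\gamma$ exponent.
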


\begin{proof}
      By the invariance of $\mu^{\varepsilon,n}$ and \eqref{eq:K0TnLalpha}, we have that for any $t \geq 0$, $K,\, L > 0$, $J \in \N^{+}$,
      \begin{equation}\label{eq:muvarepsilonnyHn}
      \begin{split}
            &\mu^{\varepsilon,n}(\{y \in H_{n} :
            \rho^{H}(y,K^{n}(\alpha)) \geq \delta\})
            =
            \int_{H_{n}} \P(\rho^{H}(X_{y}^{\varepsilon,n}(t),K^{n}(\alpha))
             \geq \delta)
            \,\mu^{\varepsilon,n}(\dif{y})
            \\\leq&
            \mu^{\varepsilon,n}(\{y \in H_{n} : |y| > K\})
            +
            \int_{|y| \leq K}
                \P(\rho^{H}(X_{y}^{\varepsilon,n}(t),K^{n}(\alpha)) \geq \delta)
            \,\mu^{\varepsilon,n}(\dif{y})
            \\\leq&
            \mu^{\varepsilon,n}(\{y \in H_{n} : |y| > K\})
            +
            \sup_{|y| \leq K}
            \P(X_{y}^{\varepsilon,n} \in H_{K,L,J})
            \\&+
            \int_{|y| \leq K}
                \P(\rho^{H}(X_{y}^{\varepsilon,n}(t),K^{n}(\alpha)) \geq \delta,
                X_{y}^{\varepsilon,n} \notin H_{K,L,J})
            \,\mu^{\varepsilon,n}(\dif{y}).
      \end{split}
      \end{equation}
       For the last term in \eqref{eq:muvarepsilonnyHn}, applying
      $
            \{X_{y}^{\varepsilon,n} \in H_{K,L,J}\}
            =
            \bigcap\limits_{j=1}^{J}
            \{|X_{y}^{\varepsilon,n}(j)| > L\}
      $
      and the Markov property of $\{X_{y}^{\varepsilon,n}(t)\}_{t \geq 0}$ yields
      \begin{align*}\label{eq:intyHnleqK}
            &\int_{|y| \leq K}
                \P(\rho^{H}(X_{y}^{\varepsilon,n}(t),K^{n}(\alpha)) \geq \delta,
                X_{y}^{\varepsilon,n} \notin H_{K,L,J})
            \,\mu^{\varepsilon,n}(\dif{y})
            \\\leq&
            \sum\limits_{j=1}^{J}
            \int_{|y| \leq K}
                \P\big(\rho^{H}(X_{y}^{\varepsilon,n}(t),K^{n}(\alpha)) \geq \delta,
                |X_{y}^{\varepsilon,n}(j)| \leq L\big)
            \,\mu^{\varepsilon,n}(\dif{y})
            \\=&
            \sum\limits_{j=1}^{J}
            \int_{|y| \leq K}
                \P\big(\rho^{H}(X_{y}^{\varepsilon,n}(t-j),K^{n}(\alpha)) \geq \delta,
                |X_{y}^{\varepsilon,n}(j-j)| \leq L\big)
            \,\mu^{\varepsilon,n}(\dif{y})
            \\\leq&
            \sum\limits_{j=1}^{J}
            \sup_{|y| \leq L}
            \P\big(\rho^{H}(X_{y}^{\varepsilon,n}(t-j),K^{n}(\alpha)) \geq \delta\big),
            \quad t > J.
      \end{align*}
      Substituting the above estimate into \eqref{eq:muvarepsilonnyHn} leads to
      \begin{equation}
      \begin{split}
            \mu^{\varepsilon,n}&(\{y \in H_{n} :
            \rho^{H}(y,K^{n}(\alpha)) \geq \delta\})
            \leq
            \mu^{\varepsilon,n}(\{y \in H_{n} : |y| > K\})
            \\&+
            \sum\limits_{j=1}^{J}
            \sup_{|y| \leq L}
            \P\big(\rho^{H}(X_{y}^{\varepsilon,n}(t-j),K^{n}(\alpha)) \geq \delta\big)
            +
            \sup_{|y| \leq K}
            \P(X_{y}^{\varepsilon,n} \in H_{K,L,J}),
            \quad t > J.
      \end{split}
      \end{equation}
      It follows from Lemma \ref{lem:stochasticevolutiontailestimate} that for any $\alpha \geq 0$, there exists $\bar{K} > 0$ such that
      \begin{equation}\label{eq:muvnyHnK}
            \mu^{\varepsilon,n}(\{y \in H_{n} : |y| > \bar{K}\})
            \leq
            \exp\Big(-\frac{\alpha}{\varepsilon^{2}}\Big),
            \quad \varepsilon \leq 1.
      \end{equation}
      Lemma \ref{lem:zTzK0TnLalpha} shows that for any $\alpha > 0$ and $\delta > 0$, there exists $\bar{L} > 0$ and $\bar{T} > 0$ such that for any $t \geq \bar{T}$ and $y \in H_{n}$ with $|y| \leq \bar{L}$,
      \begin{equation*}
            \rho^{H}(z(t),K^{n}(\alpha)) < \frac{\delta}{2},
            \quad z \in K_{0,t}^{n,\bar{L}}(\alpha).
      \end{equation*}
      Then 
      the triangle inequality
      $
            \rho^{H}(X_{y}^{\varepsilon,n}(t),K^{n}(\alpha))
            \leq
            \rho^{H}(X_{y}^{\varepsilon,n}(t),z(t))
            +
            \rho^{H}(z(t),K^{n}(\alpha))
      $
      implies $$\{\rho^{H}(X_{y}^{\varepsilon,n}(t),K^{n}(\alpha)) \geq \delta\} \subset
      \{\rho^{C([0,t];H)}(X_{y}^{\varepsilon,n},z)
      \geq \frac{\delta}{2}\},
      \quad z \in K_{0,t}^{n,\bar{L}}(\alpha).$$
      By the arbitrariness of $z \in K_{0,t}^{n,\bar{L}}(\alpha)$, we get
      $$\{\rho^{H}(X_{y}^{\varepsilon,n}(t),K^{n}(\alpha)) \geq \delta\} \subset
      \{\rho^{C([0,t];H)}
            (X_{y}^{\varepsilon,n},K_{0,t}^{n,y}(\alpha))
      \geq \frac{\delta}{2}\},
      \quad y \in H_{n}, |y| \leq \bar{L}.$$
      This together with \eqref{eq:semisoluldpupper} implies that for any $t \geq \bar{T}$ there exists $\varepsilon(t) > 0$ such that for any $y \in H_{n}$ with $|y| \leq \bar{L}$,
      \begin{align*}
            \P\big(\rho^{H}(X_{y}^{\varepsilon,n}(t),K^{n}(\alpha)) \geq \delta\big)
            \leq
            \P\big(\rho^{C([0,t];H)}(X_{y}^{\varepsilon,n},
                   K_{0,t}^{n,y}(\alpha)) \geq \delta/2\big)
            \leq
            \exp\Big(-\frac{\alpha-\gamma/2}{\varepsilon^{2}}\Big),
            \quad \varepsilon \leq \varepsilon(t).
      \end{align*}
      Taking $t = \bar{T} + J$ and $\varepsilon_{1} := \min\{\varepsilon(t-j), j = 1,\ldots,J\}$, we have
      $$
            \sum\limits_{j=1}^{J}
            \sup_{|y| \leq \bar{L}}
            \P\big(\rho^{H}(X_{y}^{\varepsilon,n}(t-j),K^{n}(\alpha)) \geq \delta\big)
            \leq
            J \exp\Big(-\frac{\alpha-\gamma/2}{\varepsilon^{2}}\Big),
            \quad \varepsilon \leq \varepsilon_{1}.
      $$
      Moreover, Lemma \ref{lem:infI0barJnyz310} implies that there exists $\bar{J} \in \N^{+}$ such that $\rho^{C([0,\bar{J}];H)}(H_{\bar{K},\bar{L},\bar{J}},
            K_{0,\bar{J}}^{n,y}(\alpha)) > 0$. It follows from \eqref{eq:semisoluldpupper} that there exists $\varepsilon_{2} > 0$ such that
      \begin{equation*}
      \begin{split}
            \sup_{|y| \leq \bar{K}}
            \P(X_{y}^{\varepsilon,n} \in H_{\bar{K},\bar{L},\bar{J}})
            \leq&
            \sup_{|y| \leq \bar{K}}
            \P\big(\rho^{C([0,\bar{J}];H)}
            (X_{y}^{\varepsilon,n},K_{0,\bar{J}}^{n,y}(\alpha)) \geq
            \rho^{C([0,\bar{J}];H)}(H_{\bar{K},\bar{L},\bar{J}},
            K_{0,\bar{J}}^{n,y}(\alpha))\big)
            \\\leq&
            \exp\Big(-\frac{\alpha-\gamma/2}{\varepsilon^{2}}\Big),
            \quad \varepsilon \leq \varepsilon_{2}.
      \end{split}
      \end{equation*}
      Combining the above estimates shows that for any $\varepsilon \leq \varepsilon_{3} := \min\{1,\varepsilon_{1},\varepsilon_{2}\}$,
      $$
            \mu^{\varepsilon,n}(\{y \in H_{n} :
            \rho^{H}(y,K^{n}(\alpha)) \geq \delta\})
            \leq
            \exp\Big(-\frac{\alpha}{\varepsilon^{2}}\Big)
            +
            (1+\bar{J})\exp\Big(-\frac{\alpha-\gamma/2}{\varepsilon^{2}}\Big).
      $$
      By taking sufficiently small $\varepsilon_{0} \leq \varepsilon_{3}$, we complete the proof.
\end{proof}

\subsection{Weakly asymptotical preservation for the LDP of $\{\mu^{\varepsilon,n}\}_{\varepsilon > 0}$}
\label{sebsec:semiinvameaLDPasy}

To estimate the error between the rate functions $V^{n}$ and $V$, we denote the effective domain of $V$ by
\begin{equation}
      \mathcal{D}_{V}
      :=
      \{u \in H : V(u) < \infty\}.
\end{equation}
Now we give the following definition of weakly asymptotical preservation  for the LDP of invariant measures by a numerical method.

\begin{Definition}
\label{def:semiinvaweakasympres}
      We say that 
      the semi-discrete numerical method \eqref{eq:SPDEsemi}
      weakly asymptotically preserves the LDP of $\{\mu^{\varepsilon}\}_{\varepsilon > 0}$ if for any $\kappa > 0$ and $u \in \mathcal{D}_{V}$, there exist $n \in \N^{+}$ and $u_{n} \in H_{n}$ such that
      $$
            |u-u_{n}| < \kappa,
            \qquad
            |V(u)-V^{n}(u_{n})| < \kappa.
      $$
\end{Definition}

\begin{Theorem}\label{th:semiinvameaLDPasympre}
      Suppose that Assumptions \ref{ass:AQ}, \ref{ass:F} and \ref{ass:LFleqlambda1} hold. If
      $F \equiv \textbf{0}$,
      then 
      the semi-discrete numerical method \eqref{eq:SPDEsemi}
      weakly asymptotically preserves the LDP of $\{\mu^{\varepsilon}\}_{\varepsilon > 0}$.
\end{Theorem}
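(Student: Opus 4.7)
The plan is to combine Theorem \ref{thm:spatialsolutionwap} on the weakly asymptotical preservation of the sample path LDP with the explicit Gaussian structure that the assumption $F \equiv \mathbf{0}$ produces. Given $u \in \mathcal{D}_V$ and $\kappa > 0$, I would construct $u_n \in H_n$ as the terminal value of a path obtained by applying Theorem \ref{thm:spatialsolutionwap} to a near-optimal path realizing $V(u)$, and then observe that in the linear case the resulting $u_n$ is simply $P_n u$, so that the closeness of $V^n(u_n)$ to $V(u)$ reduces to a convergent tail estimate.

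For the upper bound $V^n(u_n) \leq V(u) + \kappa$, I would use the definition \eqref{eq:invaLdpratefun} to pick $T_\kappa > 0$ and $z_\kappa \in C([0,T_\kappa];H)$ with $z_\kappa(0) = 0$, $z_\kappa(T_\kappa) = u$, and $I_{0,T_\kappa}^0(z_\kappa) \leq V(u) + \kappa/3$; finiteness of this rate function places $z_\kappa$ in $W_{2,Q}^{1,2}(T_\kappa)$. Applying Theorem \ref{thm:spatialsolutionwap} with $x = 0 \in \dot{H}^2$ produces $n_1 \in \N^+$ and $z_n \in \mathcal{H}_1^{n,0}(T_\kappa)$ such that $|z_\kappa - z_n|_{C([0,T_\kappa];H)} < \kappa/3$ and $|I_{0,T_\kappa}^0(z_\kappa) - I_{0,T_\kappa}^{n,0}(z_n)| < \kappa/3$ for $n > n_1$. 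Setting $u_n := z_n(T_\kappa)$ gives $|u - u_n| < \kappa/3$, and since $z_n$ joins $0$ to $u_n$ inside $H_n$, the infimum definition of $V^n(u_n)$ yields $V^n(u_n) \leq I_{0,T_\kappa}^{n,0}(z_n) \leq V(u) + 2\kappa/3$.

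The reverse inequality is where the linearity assumption is essential. In the case $F \equiv \mathbf{0}$ the construction in the proof of Theorem \ref{thm:spatialsolutionwap} simplifies: applying $P_n$ to the $H$-valued skeleton equation and using that $P_n$ commutes with $E(\cdot)$ and $Q^{1/2}$ (shared eigenbasis $\{e_i\}$) shows that $P_n z_\kappa$ satisfies the same ODE as $z_n$ with the same initial condition, so $z_n = P_n z_\kappa$ and hence $u_n = P_n u$. Moreover, because the equations are linear, $\mu^\varepsilon$ and $\mu^{\varepsilon,n}$ are centred Gaussian measures with covariances $\varepsilon^2 Q_\infty$ and $\varepsilon^2 Q_\infty^n$, where $Q_\infty := \int_0^\infty E(s) Q E(s) \diff{s}$ is diagonalized by $\{e_i\}$ with eigenvalues $q_i/(2\lambda_i)$, and $Q_\infty^n$ is its restriction to $H_n$. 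Therefore
\[
V(v) = \sum_{i=1}^\infty \frac{\lambda_i}{q_i}\langle v,e_i\rangle^2, \qquad V^n(v) = \sum_{i=1}^n \frac{\lambda_i}{q_i}\langle v,e_i\rangle^2,
\]
so that $V(u) - V^n(u_n) = V(u) - V^n(P_n u) = \sum_{i > n}\frac{\lambda_i}{q_i}\langle u, e_i\rangle^2$. Since $u \in \mathcal{D}_V$ makes this non-negative series convergent, the tail lies below $\kappa$ for $n > n_2$ sufficiently large, and choosing $n > \max(n_1, n_2)$ delivers both estimates in Definition \ref{def:semiinvaweakasympres}.

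I expect the main obstacle to be precisely this reverse inequality $V(u) - V^n(u_n) < \kappa$. The forward direction is delivered almost verbatim by the sample path result in Theorem \ref{thm:spatialsolutionwap}, but bounding $V$ from above along the sequence $u_n \to u$ is not automatic, because $V$ is only lower semi-continuous and admits no obvious triangle-type decomposition $V(u) \leq V(u_n) + V(u - u_n)$. The restriction to $F \equiv \mathbf{0}$ circumvents this by turning $V$ and $V^n$ into explicit quadratic forms diagonal in the common eigenbasis $\{e_i\}$, reducing the difficulty to the convergence of a non-negative series tail on the effective domain $\mathcal{D}_V$; a genuinely nonlinear extension would require a separate approximation argument controlling the ``complementary'' cost of reaching $(I - P_n)u$.
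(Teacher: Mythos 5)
Your proposal is correct and the overall plan matches the paper for the forward inequality $V^n(u_n) \le V(u)+\kappa$: you pick a near-optimal $z_\kappa$, observe that in the linear case the approximant of Theorem~\ref{thm:spatialsolutionwap} collapses to $z_n = P_n z_\kappa$ with terminal point $u_n = P_n u$, and use the infimum definition of $V^n$. The paper does the same thing but proves $\lim_n I_{0,T_\kappa}^{n,0}(P_n z_\kappa) = I_{0,T_\kappa}^0(z_\kappa)$ directly from a dominated-convergence estimate on $\Theta_n(t)$ rather than quoting Theorem~\ref{thm:spatialsolutionwap}; your shortcut is fine. For the reverse inequality your route is genuinely different. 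The paper proceeds without any explicit formula: given a near-optimal $z_{n,\kappa}$ for $V^n(u_n)$, it notes that $z_{n,\kappa}\in\mathcal{H}_1^{n,0}(T_{n,\kappa})\subset W_{2,Q}^{1,2}(T_{n,\kappa})$, uses the identity $I_{0,T_{n,\kappa}}^{n,0}(z_{n,\kappa}) = I_{0,T_{n,\kappa}}^{0}(z_{n,\kappa})$ (valid when $F\equiv 0$ because all operators commute with $P_n$), deduces $V(u_n)\le V^n(u_n)+\kappa/2$, and then closes the loop with the lower semi-continuity of $V$ along $u_n\to u$. You instead invoke the closed-form Gaussian quasi-potential $V(v)=\sum_i(\lambda_i/q_i)\langle v,e_i\rangle^2$ and its finite-dimensional truncation, reducing the gap $V(u)-V^n(P_n u)$ to a tail of a convergent series. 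Your formula is correct (and can be checked mode-by-mode since the coordinates decouple for $F\equiv 0$), so the argument works; but it does rest on an explicit identification of the rate function that the paper never proves or cites, whereas the paper's lower-semicontinuity route stays entirely within the variational definition \eqref{eq:invaLdpratefun} and \eqref{eq:Vny}. If you adopt the explicit-formula approach you should add a short derivation (or reference) for the quasi-potential of the infinite-dimensional Ornstein--Uhlenbeck process; otherwise your proof has a small unsupported step at exactly the place you identify as the ``main obstacle.''
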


\begin{proof}
      For any $u \in \mathcal{D}_{V}$, we set $u_{n} := P_{n}u, n \in \N^{+}$ and obtain $\lim\limits_{n \to \infty}|u-u_{n}| = 0$,
      i.e., for each $\kappa > 0$ there exists $n_{1} \in \N^{+}$ such that
      \begin{equation}\label{eq:xynHkappa}
            |u-u_{n}| < \kappa,\quad n \geq n_{1}.
      \end{equation}
      The definition of $V(u)$ implies that for the above given $\kappa > 0$ there exists $T_{\kappa} > 0$ and $z_{\kappa} \in C([0,T_{\kappa}];H)$ with $z_{\kappa}(0) = 0, z_{\kappa}(T_{\kappa}) = u$ such that
      $$
            I_{0,T_{\kappa}}^{0}(z_{\kappa})
            \leq
            V(u) + \frac{\kappa}{2} < +\infty,
      $$
      which in combination with \eqref{eq:I0Txz} yields $z_{\kappa} \in W_{2,Q}^{1,2}(T_{\kappa})$. Define
      \begin{equation*}
            \varphi_{\kappa}(t)
            =
            Q^{-\frac{1}{2}}
            \Big(\frac{\diff{z_{\kappa}(t)}}{\diff{t}}
            - Az_{\kappa}(t) - F(z_{\kappa}(t))\Big)
      \end{equation*}
      for almost all $t \in [0,T_{\kappa}]$, then $\varphi_{\kappa} \in L^{2}(0,T_{\kappa};H), z_{0,0}^{\varphi_{\kappa}} = z_{\kappa}$ and $\frac{1}{2}|\varphi_{\kappa}|_{L^{2}(0,T_{\kappa};H)}^{2} = I_{0,T_{\kappa}}^{0}(z_{\kappa}) < +\infty$, which together with Lemma \ref{lem:spatialregularity} gives
      \begin{equation}\label{eq:zkappatdotH2}
            |z_{\kappa}(t)|_{\dot{H}^{2}}
            =
            |z_{0,0}^{\varphi_{\kappa}}(t)|_{\dot{H}^{2}}
            \leq
            C,
            \quad t \in [0,T_{\kappa}].
      \end{equation}
      As $P_{n}z_{\kappa} \in \mathcal{H}_{1}^{n,0}(T_{\kappa}),n \in \N^{+}$, we assert that
      \begin{equation}\label{eq:I0Tkappan0Pnzkappa342}
            \lim\limits_{n \to \infty}
            I_{0,T_{\kappa}}^{n,0}(P_{n}z_{\kappa})
            =
            I_{0,T_{\kappa}}^{0}(z_{\kappa}),
      \end{equation}
      which shows that there exists $n_{2} \in \N^{+}$ such that
      \begin{equation}\label{eq:I0Tkappan0Pnzkappa343}
            |I_{0,T_{\kappa}}^{0}(z_{\kappa})
            -I_{0,T_{\kappa}}^{n,0}(P_{n}z_{\kappa})|
            \leq
            \frac{\kappa}{2},
            \quad n \geq n_{2}.
      \end{equation}
      In fact, applying the identity
      $|u|^{2} - |v|^{2} + |u-v|^{2} = 2\langle u,u-v \rangle$ for all $u,v \in H$, \eqref{eq:I0Txz}, \eqref{eq:I0Tnyz}
      and the H\"{o}lder inequality leads to
      \begin{align*}
            &~|I_{0,T_{\kappa}}^{0}(z_{\kappa})
            -I_{0,T_{\kappa}}^{n,0}(P_{n}z_{\kappa})|
            \leq
            \frac{1}{2}
            \int_{0}^{T_{\kappa}}
            \Bigg|
                \Big|
                Q^{-\frac{1}{2}}
                \Big(\frac{\diff{z_{\kappa}(t)}}{\diff{t}}
                - Az_{\kappa}(t) - F(z_{\kappa}(t))\Big)
                \Big|^{2}
                \\&~-
                \Big|
                Q_{n}^{-\frac12}
                \Big(\frac{\diff{P_{n}z_{\kappa}(t)}}{\diff{t}}
                - A_{n}P_{n}z_{\kappa}(t) - F_{n}(P_{n}z_{\kappa}(t))\Big)
                \Big|^{2}
            \Bigg|
            \diff{t}
            \\=&~
            \frac{1}{2}
            \int_{0}^{T_{\kappa}}
            \Bigg|
            -\Big|Q^{-\frac{1}{2}}
            \Big(\frac{\diff{z_{\kappa}(t)}}{\diff{t}}
            - Az_{\kappa}(t) - F(z_{\kappa}(t))\Big)
            -
            Q_{n}^{-\frac12}
            \Big(\frac{\diff{P_{n}z_{\kappa}(t)}}{\diff{t}}
            - AP_{n}z_{\kappa}(t) - F_{n}(P_{n}z_{\kappa}(t))\Big)
            \Big|^{2}
            \\&~+
            2\Big\langle
            Q^{-\frac{1}{2}}
            \Big(\frac{\diff{z_{\kappa}(t)}}{\diff{t}}
            - Az_{\kappa}(t) - F(z_{\kappa}(t))\Big)
            -
            Q_{n}^{-\frac12}
            \Big(\frac{\diff{P_{n}z_{\kappa}(t)}}{\diff{t}}
            - AP_{n}z_{\kappa}(t) - F_{n}(P_{n}z_{\kappa}(t))\Big),
            \\&~~~~~~~~~~Q^{-\frac{1}{2}}
            \Big(\frac{\diff{z_{\kappa}(t)}}{\diff{t}}
            - Az_{\kappa}(t) - F(z_{\kappa}(t))\Big)
            \Big\rangle
            \Bigg|\diff{t}
            \\\leq&~
            \frac{1}{2}\int_{0}^{T_{\kappa}} \Theta_{n}(t) \diff{t}
            +
            \Big(2I_{0,T_{\kappa}}^{0}(z_{\kappa})
            \int_{0}^{T_{\kappa}}\Theta_{n}(t)\diff{t}\Big)^{\frac{1}{2}},
      \end{align*}
      where
      \begin{equation*}
            \Theta_{n}(t)
            :=
            \Big|Q^{-\frac{1}{2}}
            \Big(\frac{\diff{z_{\kappa}(t)}}{\diff{t}}
            - Az_{\kappa}(t) - F(z_{\kappa}(t))\Big)
            -
            Q_{n}^{-\frac12}
            \Big(\frac{\diff{P_{n}z_{\kappa}(t)}}{\diff{t}}
            - AP_{n}z_{\kappa}(t) - F_{n}(P_{n}z_{\kappa}(t))\Big)
            \Big|^{2}.
      \end{equation*}
      Observing $Q^{-\frac12}u = Q_{n}^{-\frac12}u,u \in H_{n}$, we apply the mean value formula to get
      \begin{align*}
            \Theta_{n}(t)
            =&~
            \Big|(I-P_{n})Q^{-\frac{1}{2}}
            \Big(\frac{\diff{z_{\kappa}(t)}}{\diff{t}}
            - Az_{\kappa}(t) - F(z_{\kappa}(t))\Big)
            -
            Q^{-\frac{1}{2}}
            \big(F_{n}(z_{\kappa}(t)) - F_{n}(P_{n}z_{\kappa}(t))\big)
            \Big|^{2}
            \\=&~
            \Big|(I-P_{n})Q^{-\frac{1}{2}}
            \Big(\frac{\diff{z_{\kappa}(t)}}{\diff{t}}
            - Az_{\kappa}(t) - F(z_{\kappa}(t))\Big)
            \\&~-
            \int_{0}^{1}Q^{-\frac{1}{2}}
            F_{n}^{'}\big(P_{n}z_{\kappa}(t)
            +r(z_{\kappa}(t)-P_{n}z_{\kappa}(t))\big)
            \big(z_{\kappa}(t)-P_{n}z_{\kappa}(t)\big)\diff{r}\Big|^{2}.
      \end{align*}
      On one hand, we use \eqref{eq:hybridAQ}, \eqref{eq:auxiliaryassumption}, $F_{n}^{'} = P_{n}F'$ and $|I-P_{n}|_{\mathcal{L}(H)} = 1, n \in \N^{+}$ to show
      \begin{equation*}
      \begin{split}
            \Theta_{n}(t)
            \leq
            2\Big|Q^{-\frac{1}{2}}
            \Big(\frac{\diff{z_{\kappa}(t)}}{\diff{t}}
            - Az_{\kappa}(t) - F(z_{\kappa}(t))\Big)\Big|^{2}
            +
            2L^{2}\big(1+4|z_{\kappa}(t)|_{\dot{H}^{2}}^{2}\big)^{2}
            |z_{\kappa}(t)|_{\dot{H}^{2}}^{2}
      \end{split}
      \end{equation*}
      with
      \begin{equation*}
            \int_{0}^{T_{\kappa}} \Big|Q^{-\frac{1}{2}}
            \Big(\frac{\diff{z_{\kappa}(t)}}{\diff{t}} - Az_{\kappa}(t)
            - F(z_{\kappa}(t))\Big)\Big|^{2}  \diff{t}
            =
            2I_{0,T_{\kappa}}^{0}(z_{\kappa})
            <
            \infty,
      \end{equation*}
      \begin{equation*}
            \int_{0}^{T_{\kappa}}
            \big(1+4|z_{\kappa}(t)|_{\dot{H}^{2}}^{2}\big)^{2}
            |z_{\kappa}(t)|_{\dot{H}^{2}}^{2}
            \diff{t}
            \leq
            T_{\kappa}C^{2}\big(1+4C^{2}\big)^{2}
            <
            \infty
      \end{equation*}
      due to \eqref{eq:zkappatdotH2}. On the other hand, using $\lim\limits_{n \to \infty} P_{n}u = u, u \in H$ and
      \eqref{eq:auxiliaryassumption} yields
      \begin{equation*}
      \begin{split}
            \lim\limits_{n \to \infty} \Theta_{n}(t)
            \leq&
            2\lim\limits_{n \to \infty}
            \Big|(I-P_{n})Q^{-\frac{1}{2}}
            \Big(\frac{\diff{z_{\kappa}(t)}}{\diff{t}}
            - Az_{\kappa}(t) - F(z_{\kappa}(t))\Big)\Big|^{2}
            \\&+
            2L^{2}\big(1+4|(-A)z_{\kappa}(t)|^{2}\big)^{2}
            \lim\limits_{n \to \infty}
            |(I-P_{n})(-A)z_{\kappa}(t)|^{2} = 0.
      \end{split}
      \end{equation*}
      Combining the above results and using the dominated convergence theorem ensure \eqref{eq:I0Tkappan0Pnzkappa342}.
      As $P_{n}z_{\kappa} \in \mathcal{H}_{1}^{n,0}(T_{\kappa}) \subset C([0,T_{\kappa}];H_{n})$ satisfying $P_{n}z_{\kappa}(0) = 0$ and $P_{n}z_{\kappa}(T_{\kappa}) = P_{n}u = u_{n}$ for each $n \in \N^{+}$, the definition of $V^{n}(u_{n})$ and \eqref{eq:I0Tkappan0Pnzkappa343} lead to
      \begin{equation}\label{eq:345}
            V^{n}(u_{n})
            \leq
            I_{0,T_{\kappa}}^{n,0}(P_{n}z_{\kappa})
            \leq
            I_{0,T_{\kappa}}^{0}(z_{\kappa}) + \frac{\kappa}{2}
            \leq
            V(u) + \kappa,
            \quad n \geq \max\{n_{1},n_{2}\}.
      \end{equation}
      It remains to show $V(u) \leq V^{n}(u_{n}) + \kappa$ for sufficiently large $n \in \N^{+}$.
      Now for each $n \geq \max\{n_{1},n_{2}\}$, the definition of $V^{n}(u_{n})$ implies that for the above given $\kappa > 0$ there exists $T_{n,\kappa} > 0$, $z_{n,\kappa} \in C([0,T_{n,\kappa}];H_{n})$, $z_{n,\kappa}(0) = 0$, $z_{n,\kappa}(T_{n,\kappa}) = u_{n}$ such that
      \begin{equation}\label{eq:346}
            I_{0,T_{n,\kappa}}^{n,0}(z_{n,\kappa})
            \leq
            V^{n}(u_{n}) + \frac{\kappa}{2}
            \leq
            V(u) + \frac{3}{2}\kappa
            <
            + \infty,
      \end{equation}
      which in combination with \eqref{eq:I0Tnyz} yields $z_{n,\kappa} \in \mathcal{H}_{1}^{n,0}(T_{n,\kappa})$.
      Since 
      $z_{n,\kappa} \in W_{2,Q}^{1,2}(T_{n,\kappa})$, the definition of $V(u_{n})$ in \eqref{eq:Vny} shows
      \begin{equation}\label{eq:347}
            V(u_{n})
            \leq
            I_{0,T_{n,\kappa}}^{0}(z_{n,\kappa}).
      \end{equation}
      Due to 
      $F \equiv \textbf{0}$,
      we have
      \begin{equation}\label{eq:348}
      \begin{split}
            I_{0,T_{n,\kappa}}^{n,0}(z_{n,\kappa})
            =&
              \frac{1}{2}
              \int_{0}^{T_{n,\kappa}}
                  \Big|Q_{n}^{-\frac12}
                  \Big(\frac{\diff{z_{n,\kappa}(t)}}{\diff{t}}
                       -
                       A_{n}z_{n,\kappa}(t)\Big)
                  \Big|^{2}
              \diff{t}
            \\=&
              \frac{1}{2}
              \int_{0}^{T_{n,\kappa}}
                  \Big|Q^{-\frac12}
                  \Big(\frac{\diff{z_{n,\kappa}(t)}}{\diff{t}}
                       -
                       Az_{n,\kappa}(t)\Big)
                  \Big|^{2}
              \diff{t}
              =
              I_{0,T_{n,\kappa}}^{0}(z_{n,\kappa}).
      \end{split}
      \end{equation}
      As $\lim\limits_{n \to \infty} u_{n} = u$, the semi-lower continuity of $V$ implies $V(u) \leq \liminf\limits_{n \to \infty} V(u_{n})$, which means that for the above given $\kappa > 0$ there exists $n_{3} \in \N^{+}$ such that
      \begin{equation}\label{eq:349}
            V(u) \leq \liminf_{n \to \infty} V(u_{n})
            \leq
            V(u_{n}) + \frac{\kappa}{2},
            \quad n \geq n_{3}.
      \end{equation}
      Combining \eqref{eq:346}--\eqref{eq:349} leads to
      \begin{equation}\label{eq:350}
            V(u)
            \leq
            V^{n}(u_{n}) + \kappa,
            \quad n \geq \max\{n_{1},n_{2},n_{3}\}.
      \end{equation}
      Finally, \eqref{eq:xynHkappa}, \eqref{eq:345} and \eqref{eq:350} finish the proof.
\end{proof}

\section{Spatio-temporal discretization and its LDPs}
\label{sec:temporal}
In Subsection \ref{eq:AEEM}, we obtain a full-discrete numerical approximation $\{Y_{y,m}^{\varepsilon,n}\}_{m \in \N}$ by applying the accelerated exponential Euler method to \eqref{eq:SPDEsemi} and give the uniform LDP of sample paths of $\{Y_{y}^{\varepsilon, n}\}_{\varepsilon > 0}$, which is a continuous version of $\{Y_{y,m}^{\varepsilon,n}\}_{m \in \N}$.
Then the weakly asymptotical preservation for the LDP of $\{X_{x}^{\varepsilon}\}_{\varepsilon > 0}$ is given in Theorem \ref{th:fullLDPasympre}.
Subsection \ref{subsec:fullinvameaLDP} shows the LDP of invariant measures $\{\mu^{\varepsilon,n,\tau}\}_{\varepsilon > 0}$ of  $\{Y_{y}^{\varepsilon, n}\}_{\varepsilon > 0}$ and establishes its weakly asymptotical preservation for the LDP of $\{\mu^{\varepsilon}\}_{\varepsilon > 0}$.
\subsection{Accelerated exponential Euler method}
\label{eq:AEEM}

Let $\tau > 0$ be a uniform time stepsize and $t_{m} := m\tau, m \in \N$ the grid points, the accelerated exponential Euler method (see \cite{jentzen2009overcoming}) applied to \eqref{eq:SPDEsemi} is given
by setting $Y_{y,0}^{\varepsilon,n} = y$ and
\begin{equation}\label{eq:EE}
      Y_{y,m+1}^{\varepsilon,n}
      =
      E_{n}(\tau)Y_{y,m}^{\varepsilon,n}
      +
      \int_{t_{m}}^{t_{m+1}}
      E_{n}(t_{m+1}-s)F_{n}(Y_{y,m}^{\varepsilon,n}) \diff{s}
      +
      \varepsilon \int_{t_{m}}^{t_{m+1}}  E_{n}(t_{m+1}-s) Q_{n}^{\frac12}\diff{W_{n}(s)}
\end{equation}
for each $m \in \N$. To give a continuous time approximation of $\{Y_{y,m}^{\varepsilon,n}\}_{m \in \N}$, we define $\{Y_{y}^{\varepsilon,n}(t)\}_{t \geq 0}$ by
\begin{equation}\label{eq:4242}
      Y_{y}^{\varepsilon,n}(t)
      =
      E_{n}(t-t_{m}) Y_{y,m}^{\varepsilon,n}
      +
      \int_{t_{m}}^{t} E_{n}(t-s)
      F_{n}(Y_{y,m}^{\varepsilon,n}) \diff{s}
      +
      \varepsilon \int_{t_{m}}^{t} E_{n}(t-s) Q_{n}^{\frac12}\diff{W_{n}(s)}
\end{equation}
for all $t \in [t_{m},t_{m+1}), m \in \N$.
Let $\lfloor \cdot \rfloor$ represent the largest integer no larger than the corresponding constant.
As $Y_{y}^{\varepsilon,n}(t_{m}) = Y_{y,m}^{\varepsilon,n},m \in \N$, i.e., $Y_{y}^{\varepsilon,n}(\tau \lfloor t/\tau \rfloor) = Y_{y,\lfloor t/\tau \rfloor}^{\varepsilon,n}, t \geq 0$, we have
\begin{equation}\label{eq:saptiotemporalconteq}
\begin{split}
      &\diff{Y_{y}^{\varepsilon,n}(t)}
      =
      \big(A_{n}Y_{y}^{\varepsilon,n}(t)
      +
      F_{n}(Y_{y}^{\varepsilon,n}(\tau \lfloor t/\tau \rfloor)) \big)\diff{t}
      +
      \varepsilon Q_{n}^{\frac12}\diff{W_{n}(t)},
      \quad t > 0,
      \quad Y_{y}^{\varepsilon,n}(0) = y.
\end{split}
\end{equation}
For any $\phi \in L^{2}(0,T;H_{n})$, it follows from \cite[Theorem 1.1]{peszat1994large} that the skeleton equation
\begin{equation*}
      \frac{\diff{z_{0,y}^{n,\tau,\phi}(t)}}{\diff{t}}
      =
      A_{n}z_{0,y}^{n,\tau,\phi}(t)
      +
      F_{n}(z_{0,y}^{n,\tau,\phi}(\tau \lfloor t/\tau \rfloor))
      +
      Q_{n}^{\frac12}\phi(t),
      \quad t > 0,
      \quad z_{0,y}^{n,\tau,\phi}(0) = y
\end{equation*}
admits a unique mild solution $\{z_{0,y}^{n,\tau,\phi}(t)\}_{t \geq 0}$. 
We will prove that $\{Y_{y}^{\varepsilon, n}\}_{\varepsilon > 0}$ satisfies a  uniform LDP on $C([0,T];H_{n})$ with the rate function
$I_{0,T}^{n,\tau,y} \colon C([0,T];H_{n}) \to [0,+\infty]$
defined by
\begin{equation}\label{eq:SPDEfullratefun}
      I_{0,T}^{n,\tau,y}(z)
      =
      \frac{1}{2} \inf\{|\phi|_{L^{2}(0,T;H)}^{2}:
      \phi \in L^{2}(0,T;H_{n}), z_{0,y}^{n,\tau,\phi}= z\},
      \quad z \in C([0,T];H_{n}).
\end{equation}
By the arguments for \eqref{eq:I0Tnyz}, we have
\begin{equation}\label{eq:I0TnMyz}
      I_{0,T}^{n,\tau,y}(z)
      =
      \begin{cases}
        \frac{1}{2}
        \int_{0}^{T}
            \Big|Q_{n}^{-\frac12}\Big(\frac{\diff{z(t)}}{\diff{t}}
                 -
                 A_{n}z(t) - F_{n}(z(\tau \lfloor t/\tau \rfloor))\Big)
            \Big|^{2}
        \diff{t}
        , & \mbox{if } z \in \mathcal{H}_{1}^{n,y}(T), \\
        +\infty, & \mbox{otherwise}.
      \end{cases}
\end{equation}

Similarly to the proof of Theorem \ref{thm:semisolutionLDP}, we establish the uniform LDP of sample paths of
$\{Y_{y}^{\varepsilon,n}\}_{\varepsilon > 0}$.

\begin{Theorem}
\label{th:fullLDPsamplepath}
      Suppose that Assumptions \ref{ass:AQ} and \ref{ass:F} hold. Then $\{Y_{y}^{\varepsilon,n}\}_{\varepsilon > 0}$ satisfies a uniform LDP on $C([0,T];H_{n})$ with the rate $\frac{1}{\varepsilon^{2}}$ and the good rate function $I_{0,T}^{n,\tau,y}$ given by \eqref{eq:SPDEfullratefun}, i.e.,
      \begin{enumerate}
        \item [(i)] compact level set: for any $T > 0$, $n \in \N^{+}$ and $y \in H_{n}$, the level set $K_{0,T}^{n,\tau,y}(\alpha) :=
            \{z \in C([0,T];H_{n}) : I_{0,T}^{n,\tau,y}(z) \leq \alpha\}$
            is compact for every $\alpha \geq 0$;

        \item [(ii)] uniform lower bound: for any $T > 0$, $n \in \N^{+}$, $\alpha \geq 0$, $\delta > 0$, $\gamma > 0$ and $K > 0$, there exists $\varepsilon_{0} > 0$ such that for any $y \in H_{n}$ with $|y| \leq K$ and $z \in K_{0,T}^{n,\tau,y}(\alpha)$,
            \begin{equation}\label{eq:fullsoluldplower}
            \P\big(|Y_{y}^{\varepsilon,n}-z|_{C([0,T];H)} < \delta\big)
            \geq
            \exp\Big(-\frac{I_{0,T}^{n,\tau,y}(z)+\gamma}{\varepsilon^2}\Big),
            \quad \varepsilon \leq \varepsilon_{0};
            \end{equation}
        \item [(iii)] uniform upper bound: for any $T > 0$, $n \in \N^{+}$, $\alpha \geq 0$, $\delta > 0$, $\gamma > 0$ and $K > 0$, there exists $\varepsilon_0 > 0$ such that for any $y \in H_{n}$ with $|y| \leq K$,
            \begin{equation}
            \P\big(\rho^{C([0,T];H)}(Y_{y}^{\varepsilon,n},
            K_{0,T}^{n,\tau,y}(\alpha)) \geq \delta\big)
            \leq
            \exp\Big(-\frac{\alpha-\gamma}{\varepsilon^2}\Big),
            \quad \varepsilon \leq \varepsilon_{0}.
            \end{equation}
      \end{enumerate}
\end{Theorem}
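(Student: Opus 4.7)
The plan is to mirror the proof of Theorem \ref{thm:semisolutionLDP} (which itself follows \cite{peszat1994large}), with the only substantive modification being the treatment of the piecewise-constant-in-time evaluation $F_n(z(\tau\lfloor t/\tau\rfloor))$ appearing in \eqref{eq:saptiotemporalconteq} and in the skeleton equation underlying \eqref{eq:I0TnMyz}. Since $F_n$ is globally Lipschitz with constant $L_F$ by \eqref{eq:globalLF}, $H_n$ is finite dimensional, and $|E_n(t)|_{\mathcal{L}(H_n)} \leq e^{-\lambda_1 t}$, the a priori estimates and continuity lemmas required by Peszat's framework survive this modification with only Gronwall-type bookkeeping.

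For the compact level set (i), I would take a sequence $\{z_k\} \subset K_{0,T}^{n,\tau,y}(\alpha)$ with $z_k = z_{0,y}^{n,\tau,\phi_k}$ for $\phi_k \in L^2(0,T;H_n)$ satisfying $\tfrac{1}{2}|\phi_k|_{L^2(0,T;H)}^2 \leq \alpha + 1/k$. By weak compactness of closed balls in $L^2(0,T;H_n)$, extract a subsequence with $\phi_k \rightharpoonup \phi$. A Gronwall argument using \eqref{eq:globalLF}, together with the uniform convergence of $z_k(\tau\lfloor\cdot/\tau\rfloor)$ inherited from uniform convergence of $z_k$ on each subinterval $[t_m,t_{m+1}]$, yields $z_k \to z_{0,y}^{n,\tau,\phi}$ in $C([0,T];H_n)$. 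Weak lower semi-continuity of the $L^2$ norm then gives $\tfrac{1}{2}|\phi|_{L^2}^2 \leq \alpha$, so the limit belongs to $K_{0,T}^{n,\tau,y}(\alpha)$; since bounded closed sets in the finite-dimensional image are compact, the level set is compact.

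For the uniform lower and upper bounds (ii)--(iii), the key idea is to factor both the random solution and the skeleton through the same deterministic map. Define the Gaussian stochastic convolution $\Gamma_n^\varepsilon(t) := \varepsilon\int_0^t E_n(t-s)Q_n^{\frac12}\diff{W_n(s)}$ and, for a deterministic path $g \in C([0,T];H_n)$, let $\Phi_{n,\tau,y}(g)$ denote the unique solution of
\begin{equation*}
  Y(t) = E_n(t)y + \int_0^t E_n(t-s)F_n(Y(\tau\lfloor s/\tau\rfloor))\diff{s} + g(t),\quad t \in [0,T].
\end{equation*}
Then $Y_y^{\varepsilon,n} = \Phi_{n,\tau,y}(\Gamma_n^\varepsilon)$ and $z_{0,y}^{n,\tau,\phi} = \Phi_{n,\tau,y}(g^\phi)$ with $g^\phi(t) := \int_0^t E_n(t-s)Q_n^{\frac12}\phi(s)\diff{s}$. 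A Gronwall estimate combined with $|E_n|_{\mathcal{L}(H_n)} \leq 1$ shows that $\Phi_{n,\tau,y}$ is Lipschitz in $g$ with a constant independent of $y$, which transfers closeness in the noise path to closeness in $C([0,T];H)$ uniformly for $|y| \leq K$. The lower bound then follows from a Cameron--Martin shift that reduces the event $\{|Y_y^{\varepsilon,n}-z_{0,y}^{n,\tau,\phi}|_{C([0,T];H)} < \delta\}$ to a small-ball estimate for $\Gamma_n^\varepsilon$, while the upper bound uses the Fernique-type exponential tail for $\Gamma_n^\varepsilon$ exactly as in \eqref{eq:GammanC0tHn2} and Lemma \ref{lem:stochasticevolutiontailestimate}.

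The main obstacle is propagating the uniformity in $y$ with $|y| \leq K$ through all the exponential estimates, particularly in (ii) where $z$ is allowed to range over the entire level set $K_{0,T}^{n,\tau,y}(\alpha)$ simultaneously as $y$ varies. This is handled by observing that the $y$-dependence of $\Phi_{n,\tau,y}(g)$ enters only through the contractive term $E_n(\cdot)y$ and that the Gronwall constants depend only on $L_F$, $T$ and the semigroup bound; consequently, the Lipschitz constant of $\Phi_{n,\tau,y}$ can be chosen uniformly in $\{y : |y| \leq K\}$, and the same $\varepsilon_0$ works across this family.
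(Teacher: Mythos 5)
Your proposal reconstructs the Peszat (1994) contraction-principle argument that the paper invokes (via Theorem~\ref{thm:semisolutionLDP}) without writing out: factoring both $Y_y^{\varepsilon,n}$ and the skeleton through the deterministic solution map $\Phi_{n,\tau,y}$, which is globally Lipschitz in the path variable with constant $e^{L_F T}$ (independent of $y$) and Lipschitz in $y$ with the same constant, so the uniform LDP transfers from the Gaussian LDP of $\Gamma_n^\varepsilon = \varepsilon\int_0^\cdot E_n(\cdot-s)Q_n^{1/2}\,\mathrm{d}W_n(s)$; the piecewise-constant argument $\tau\lfloor\cdot/\tau\rfloor$ only helps, since $\sup_{s\le t}|z_1(\tau\lfloor s/\tau\rfloor)-z_2(\tau\lfloor s/\tau\rfloor)|\le\sup_{s\le t}|z_1(s)-z_2(s)|$. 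This is the same route as the paper. One imprecision worth fixing in (i): the subinterval-by-subinterval induction you invoke has no base case as written, because on $[0,t_1]$ the convergence $z_k\to z_{0,y}^{n,\tau,\phi}$ reduces entirely to showing $\int_0^\cdot E_n(\cdot-s)Q_n^{1/2}(\phi_k-\phi)(s)\,\mathrm{d}s\to 0$ in $C([0,T];H_n)$; this follows from $\phi_k\rightharpoonup\phi$ together with the compactness of $\phi\mapsto g^\phi$ from $L^2(0,T;H_n)$-weak to $C([0,T];H_n)$ (equicontinuity by H\"older plus Arzel\`a--Ascoli in the finite-dimensional $H_n$), and only after that does the Gronwall iteration across $[t_m,t_{m+1}]$ close.
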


To characterize the error between the rate functions $I_{0,T}^{n,\tau,y}$ and $I_{0,T}^{x}$, we give the definition of weakly asymptotical preservation for the LDP of sample paths by a numerical method.
\begin{Definition}
      We say that 
      the fully-discrete numerical method \eqref{eq:saptiotemporalconteq}
      weakly asymptotically preserves the LDP of $\{X_{x}^{\varepsilon}\}_{\varepsilon > 0}$ if for any $\kappa > 0$ and $z \in W_{2,Q}^{1,2}(T)$ with $z(0) = x$, there exist $n \in \N^{+}$, $\tau > 0$ and $z_{n,\tau} \in \mathcal{H}_{1}^{n,y}(T)$ such that
      $$
            |z-z_{n,\tau}|_{C([0,T];H)} < \kappa,
            \quad\quad
            |I_{0,T}^{x}(z)-I_{0,T}^{n,\tau,y}(z_{n,\tau})| < \kappa.
      $$
\end{Definition}

Now we show that $\{Y_{y}^{\varepsilon,n}\}_{\varepsilon > 0}$ weakly asymptotically preserves the LDP of $\{X_{x}^{\varepsilon}\}_{\varepsilon > 0}$.
\begin{Theorem}
\label{th:fullLDPasympre}
      Suppose that Assumptions \ref{ass:AQ} and \ref{ass:F} hold. Then 
      the fully-discrete numerical method \eqref{eq:saptiotemporalconteq}
      weakly asymptotically preserves the LDP of $\{X_{x}^{\varepsilon}\}_{\varepsilon > 0}$.
\end{Theorem}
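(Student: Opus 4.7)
\medskip

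My plan is to imitate the construction used in Theorem \ref{thm:spatialsolutionwap}, modified to accommodate the extra time discretization. Given $z \in W_{2,Q}^{1,2}(T)$ with $z(0) = x$, I first set
$$
      \varphi(t) := Q^{-\frac12}\Big(\frac{\diff{z(t)}}{\diff{t}} - Az(t) - F(z(t))\Big),
$$
so that $\varphi \in L^2(0,T;H)$, $z_{0,x}^{\varphi} = z$, and $\tfrac12|\varphi|_{L^2(0,T;H)}^2 = I_{0,T}^{x}(z)$. Then I define $z_{n,\tau}$ as the unique solution of the fully-discrete skeleton equation
$$
      \frac{\diff{z_{n,\tau}(t)}}{\diff{t}}
      = A_n z_{n,\tau}(t) + F_n(z_{n,\tau}(\tau\lfloor t/\tau\rfloor)) + Q_n^{\frac12}P_n\varphi(t),
      \quad z_{n,\tau}(0) = P_n x = y.
$$
By construction $z_{n,\tau} \in \mathcal{H}_1^{n,y}(T)$, and plugging directly into the explicit formula \eqref{eq:I0TnMyz} gives
$$
      I_{0,T}^{n,\tau,y}(z_{n,\tau}) = \tfrac12|P_n\varphi|_{L^2(0,T;H)}^2,
$$
a value that depends only on $n$ and not on $\tau$.

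The rate-function error is therefore immediate and is the easy half of the proof: by Pythagoras,
$$
      \bigl|I_{0,T}^{x}(z) - I_{0,T}^{n,\tau,y}(z_{n,\tau})\bigr|
      = \tfrac12|(I-P_n)\varphi|_{L^2(0,T;H)}^2,
$$
which tends to $0$ as $n\to\infty$ by the dominated convergence theorem, uniformly in $\tau$. So I first fix $n_1 \in \N^+$ so that this quantity is less than $\kappa$ for all $n \geq n_1$.

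For the path error I write $z - z_{n,\tau} = (z - z_n) + (z_n - z_{n,\tau})$, where $z_n$ is the semi-discrete approximation built in the proof of Theorem \ref{thm:spatialsolutionwap}; that theorem already yields $|z - z_n|_{C([0,T];H)} \to 0$ as $n\to\infty$. For the remaining term $e := z_n - z_{n,\tau}$, both paths live in $H_n$, start from $y$, and share the same Girsanov drift $Q_n^{\frac12}P_n\varphi$; they differ only in how the nonlinearity is evaluated. An energy identity together with $\langle A_n u,u\rangle \leq -\lambda_1|u|^2$ and \eqref{eq:globalLF} gives
$$
      \frac{\diff{|e(t)|^2}}{\diff{t}}
      \leq C_F |e(t)|^2 + L_F|z_{n,\tau}(t) - z_{n,\tau}(\tau\lfloor t/\tau\rfloor)|^2,
$$
and Gronwall reduces everything to estimating $\int_0^T |z_{n,\tau}(t) - z_{n,\tau}(\tau\lfloor t/\tau\rfloor)|^2\diff{t}$.

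The hard part is thus this last time-increment estimate, which is where the discretization genuinely enters. Using the mild form of \eqref{eq:saptiotemporalconteq} on $[t_m,t_{m+1})$ I would show, for $t \in [t_m,t_{m+1})$,
$$
      |z_{n,\tau}(t) - z_{n,\tau}(t_m)|
      \leq |(E_n(t-t_m) - I)z_{n,\tau}(t_m)|
      + \tau|F_n(z_{n,\tau}(t_m))|
      + \sqrt{\tau}\,|Q^{\frac12}|_{\mathcal{L}(H)}|\varphi|_{L^2(t_m,t_{m+1};H)}.
$$
A uniform-in-$\tau$ a priori bound on $|z_{n,\tau}|_{C([0,T];H)}$, obtained by a discrete Gronwall argument along the grid in the spirit of Lemma \ref{lem:4.9} (now with a delayed drift), combined with $|(E_n(\tau)-I)u| \leq \tau\lambda_n|u|$ for $u \in H_n$ and the Lipschitz bound on $F_n$, makes each of the three terms of order $\sqrt{\tau}$ as $\tau\to 0$, albeit with an $n$-dependent prefactor. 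This $n$-dependence is harmless because I choose $n \geq n_1$ first (to simultaneously control both $|z - z_n|_{C([0,T];H)}$ and $|(I-P_n)\varphi|_{L^2}$), and then select $\tau$ sufficiently small depending on $n$ to drive $|z_n - z_{n,\tau}|_{C([0,T];H)}$ below $\kappa$; the triangle inequality then yields the desired bound $|z - z_{n,\tau}|_{C([0,T];H)} < \kappa$, completing the proof.
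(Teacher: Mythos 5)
Your proof is correct, and it is genuinely different from the paper's: you and the paper make dual choices about what to hold fixed as $\tau\to 0$. You fix the \emph{control} $P_n\varphi$, define $z_{n,\tau}$ as the fully-discrete skeleton solution driven by it, and then the rate-function value $I_{0,T}^{n,\tau,y}(z_{n,\tau})=\tfrac12|P_n\varphi|_{L^2}^2$ is automatically $\tau$-independent; the work goes into the path estimate $|z_n-z_{n,\tau}|_{C([0,T];H)}\to 0$, which you reduce via Gronwall to a time-increment bound on $z_{n,\tau}$. The paper instead fixes the \emph{path}: it sets $z_{n,\tau}:=z_n$ (the semi-discrete skeleton path, independent of $\tau$), so $|z-z_{n,\tau}|_{C([0,T];H)}<\kappa$ holds for free, and the work goes into the rate-function estimate $|I_{0,T}^{n,y}(z_n)-I_{0,T}^{n,\tau,y}(z_n)|\to 0$, which is controlled through the explicit formulas \eqref{eq:I0Tnyz} and \eqref{eq:I0TnMyz} plus the $\dot H^2$ regularity of $z_n$, \eqref{eq:hybridAQ}, \eqref{eq:auxiliaryassumption}, and the bounded convergence theorem. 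What each buys: your rate-function estimate is immediate (just Pythagoras, no regularity of $F$ beyond Lipschitz needed at that step), but you need well-posedness and a uniform-in-$\tau$ a priori bound for the fully-discrete skeleton solution, together with a time-increment estimate whose constants scale like $\lambda_n$; the paper never introduces $z_{0,y}^{n,\tau,\phi}$ at all in this proof — it works purely with $z_n$ — but in exchange must invoke the finer structure of Assumption \ref{ass:F} and the $\dot H^2$ estimate $|z_n(t)|_{\dot H^2}\leq C(1+|y|_{\dot H^2})$ to control $Q_n^{-1/2}\bigl(F_n(z_n(\tau\lfloor t/\tau\rfloor))-F_n(z_n(t))\bigr)$. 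Both arguments end up with $n$-dependent constants (your $\lambda_n$, the paper's $|{-}A_n|_{\mathcal{L}(H_n)}^2$), and both choose $n$ first and then $\tau$, so neither has an advantage there. Two small remarks on your write-up: you should spell out the uniform-in-$\tau$ a priori bound on $|z_{n,\tau}|_{C([0,T];H)}$ (the delayed drift means the Gronwall has to be run step-by-step rather than continuously, as you indicate, but it does work); and the rate-function identity $I_{0,T}^{n,\tau,y}(z_{n,\tau})=\tfrac12|P_n\varphi|_{L^2}^2$ uses that the control producing a given fully-discrete path is unique (because $Q_n^{1/2}$ is invertible on $H_n$), which is worth noting since you pass from the infimum in \eqref{eq:SPDEfullratefun} to the formula in \eqref{eq:I0TnMyz}.
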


\begin{proof}
      For any $\kappa > 0$ and $z \in W_{2,Q}^{1,2}(T)$ with $z(0) = x$, it follows from Theorem \ref{thm:spatialsolutionwap} that there exist $n \in \N^{+}$ and $z_{n} \in \mathcal{H}_{1}^{n,y}(T)$ such that
      \begin{equation}\label{eq:part111}
            |z-z_{n}|_{C([0,T];H)} < \frac{\kappa}{2} < \kappa,
            \quad\quad
            |I_{0,T}^{x}(z)-I_{0,T}^{n,y}(z_{n})| < \frac{\kappa}{2}.
      \end{equation}
      Then we use \eqref{eq:I0Tnyz}, \eqref{eq:I0TnMyz}, the identity
      $|u|^{2} - |v|^{2} + |u-v|^{2} = 2\langle u,u-v \rangle, u,v \in H$ and the H\"{o}lder inequality to get
      \begin{align*}
            |I_{0,T}^{n,y}(z_{n})
            -I_{0,T}^{n,\tau,y}(z_{n})|
            \leq&~
            \frac{1}{2}
                \int_{0}^{T}
                \Bigg|
                \Big|Q_{n}^{-\frac12}
                \Big(\frac{\diff{z_{n}(t)}}{\diff{t}}
                    -
                    A_{n}z_{n}(t)
                    -
                    F_{n}(z_{n}(t))\Big)\Big|^{2}
                \\&~-
                \Big|Q_{n}^{-\frac12}
                \Big(\frac{\diff{z_{n}(t)}}{\diff{t}}
                    -
                    A_{n}z_{n}(t)
                    -
                    F_{n}(z_{n}(\tau \lfloor t/\tau \rfloor))\Big)
                    \Big|^{2}
                \Bigg|
                \diff{t}
            \\=&~
            \frac{1}{2}
                \int_{0}^{T}
                \Big|
                2\Big\langle
                Q_{n}^{-\frac12}
                \big(F_{n}(z_{n}(\tau \lfloor t/\tau \rfloor))
                - F_{n}(z_{n}(t))\big),
                \\&~Q_{n}^{-\frac12}\Big(\frac{\diff{z_{n}(t)}}{\diff{t}}
                -
                A_{n}z_{n}(t) - F_{n}(z_{n}(t))\Big)\Big\rangle
                \\&~-
                \big|Q_{n}^{-\frac12}
                \big(F_{n}(z_{n}(\tau \lfloor t/\tau \rfloor))
                -F_{n}(z_{n}(t))\big)\big|^{2}
                \Big|
            \diff{t}
            \\\leq&~
            \Big(2I_{0,T}^{n,y}(z_{n})\int_{0}^{T}
                \big|Q_{n}^{-\frac12}
                \big(F_{n}(z_{n}(\tau \lfloor t/\tau \rfloor))
                  -
                F_{n}(z_{n}(t))\big)
                \big|^{2}\diff{t}\Big)^{\frac12}
                \\&~+
                \frac{1}{2}\int_{0}^{T}
                \big|Q_{n}^{-\frac12}
                \big(F_{n}(z_{n}(\tau \lfloor t/\tau \rfloor))
                -
                F_{n}(z_{n}(t))\big)\big|^{2}\diff{t}.
      \end{align*}
      Applying \eqref{eq:hybridAQ}, \eqref{eq:auxiliaryassumption} and the mean value formula,
      we have
      \begin{align*}
            &~\big|Q_{n}^{-\frac12}
            \big(F_{n}(z_{n}(\tau \lfloor t/\tau \rfloor))
            -
            F_{n}(z_{n}(t))\big)\big|^{2}
            \\\leq&~
            \big|Q_{n}^{-\frac12}(-A_{n})^{-1}\big|_{\mathcal{L}(H_{n})}
            \int_{0}^{1}\big|(-A_{n})F_{n}^{'}
            \big(z_{n}(t)
            +r(z_{n}(\tau \lfloor t/\tau \rfloor)
            -z_{n}(t))\big)
            \big(z_{n}(\tau \lfloor t/\tau \rfloor)
            -z_{n}(t)\big)
            \big|^{2}\diff{r}
            \\\leq&~
            C\big(1+|z_{n}(t)|_{\dot{H}^{2}}^{2}
            +|z_{n}(\tau \lfloor t/\tau \rfloor)|_{\dot{H}^{2}}^{2}\big)^{2}
            \big|(-A_{n})\big(z_{n}(\tau \lfloor t/\tau \rfloor)
            -z_{n}(t)\big)\big|^{2},
      \end{align*}
      where $F_{n}^{'} = P_{n}F'$. As $z_{n} \in \mathcal{H}_{1}^{n,y}(T)$, we define
      $$
          \psi(t)
          :=
          Q_{n}^{-\frac12}
          \Big(\frac{\diff{z_{n}(t)}}{\diff{t}}
          -
          A_{n}z_{n}(t)
          -
          F_{n}(z_{n}(t))\Big)
      $$
      for almost all $t \in [0,T]$. Then $\psi \in L^{2}(0,T;H_{n})$, $z_{0,y}^{n,\psi} = z_{n}$ and $\frac{1}{2}|\psi|_{L^{2}(0,T;H)}^{2} = I_{0,T}^{n,y}(z_{n})$.
      Similarly to the proof of \eqref{eq:38}, we obtain
      $|z_{n}(t)|_{\dot{H}^{2}} =
      |z_{0,y}^{n,\psi}(t)|_{\dot{H}^{2}}
      \leq C(1+|y|_{\dot{H}^{2}}),t \in [0,T]$, and thus
      \begin{align*}
            &\big|Q_{n}^{-\frac12}
            \big(F_{n}(z_{n}(\tau \lfloor t/\tau \rfloor))
            -F_{n}(z_{n}(t))\big)\big|^{2}
            \leq
            C\big|(-A_{n})\big|_{\mathcal{L}(H_{n})}^{2}
            \big(1+|y|_{\dot{H}^{2}}^{2}\big)^{2}|y|_{\dot{H}^{2}}^{2}
            <
            +\infty.
      \end{align*}
      By $\lim\limits_{\tau \to 0}
      \big|z_{n}(\tau \lfloor t/\tau \rfloor)
      -z_{n}(t) \big| = 0, t \in [0,T]$, we have
      \begin{align*}
            \lim\limits_{\tau \to 0}
            \big|Q_{n}^{-\frac12}
            \big(F_{n}(z_{n}(\tau \lfloor t/\tau \rfloor))
            -F_{n}(z_{n}(t))\big)\big|^{2}
            \leq
            C\big|(-A_{n})\big|_{\mathcal{L}(H_{n})}^{2}
            \lim\limits_{\tau \to 0}
            \big|z_{n}(\tau \lfloor t/\tau \rfloor) - z_{n}(t)\big|^{2}
            =
            0.
      \end{align*}
      It follows from the bounded convergence theorem that
      \begin{equation}\label{eq:410410410}
      \lim\limits_{\tau \to 0}|I_{0,T}^{n,y}(z_{n})
            -I_{0,T}^{n,\tau,y}(z_{n})| = 0,
      \end{equation}
      and consequently that there exists $\tau > 0$ such that
      \begin{equation}\label{eq:part222}
            \big|I_{0,T}^{n,y}(z_{n})
            -I_{0,T}^{n,\tau,y}(z_{n})\big|
            \leq
            \frac{\kappa}{2}.
      \end{equation}
      For $n,\tau$ given by \eqref{eq:part111} and \eqref{eq:part222}, we set $z_{n,\tau} := z_{n}$ and use the triangle inequality to complete the proof.
\end{proof}

\subsection{LDP of invariant measures of spatio-temporal discretization}
\label{subsec:fullinvameaLDP}

In this subsection, we study the LDP of invariant measures of the spatio-temporal discretization. The following lemma gives the uniform boundedness of numerical solutions, which ensures the existence of the invariant measures of the full discretization. We refer the interested readers to \cite{chen2017approximation,hong2017numerical,
hong2019invariant,cui2021weak} for the investigation on the invariant measures of the full discretzations for SPDEs.

\begin{Lemma}\label{lem:uniformbound2ordermoment}
      Suppose that Assumptions \ref{ass:AQ}, \ref{ass:F} and \ref{ass:LFleqlambda1} hold. If $\tau \leq \tau_{0}$ with $\tau_{0}$ satisfying
      $\frac{e^{\lambda_{1}\tau_{0}}-1}{\lambda_{1}\tau_{0}} = \frac{\lambda_{1}+L_{F}}{2L_{F}}$, then there exists $C > 0$ independent of $t$ such that
      \begin{equation}\label{eq:Y0vartHn}
            \sup\limits_{t \geq 0}
            \E\big[|Y_{y}^{\varepsilon,n}(t)|^{2}\big]
            \leq
            C.
      \end{equation}
\end{Lemma}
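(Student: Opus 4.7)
My strategy is to derive a contractive one-step recursion for $a_m := \E|Y_{y,m}^{\varepsilon,n}|^2$, iterate it for a uniform-in-$m$ bound, and then extend to $t\in[t_m,t_{m+1})$ via \eqref{eq:4242}. Since $F_n(Y_{y,m}^{\varepsilon,n})$ is constant in $s$ on $[t_m, t_{m+1}]$, the drift integral in \eqref{eq:EE} equals $(-A_n)^{-1}(I-E_n(\tau))F_n(Y_{y,m}^{\varepsilon,n})$. Taking $\E|\cdot|^2$ of \eqref{eq:EE}, the stochastic integral is centred and $\mathcal{F}_{t_m}$-independent so the cross terms vanish, and It\^o's isometry gives
\begin{equation*}
a_{m+1}
=
\E\big|E_n(\tau)Y_{y,m}^{\varepsilon,n}+(-A_n)^{-1}(I-E_n(\tau))F_n(Y_{y,m}^{\varepsilon,n})\big|^2
+
\varepsilon^2\int_0^\tau|E_n(\sigma)Q_n^{\frac12}|^2_{\mathcal{L}_2(H_n)}\diff{\sigma}.
\end{equation*}

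In the eigenbasis $\{e_i\}$ both $E_n(\tau)$ and $(-A_n)^{-1}(I-E_n(\tau))$ are diagonal with operator norms $e^{-\lambda_1\tau}$ and $(1-e^{-\lambda_1\tau})/\lambda_1$ respectively; combined with the Lipschitz bound $|F_n(u)|\leq L_F|u|$ (from $F(0)=0$ in Assumption~\ref{ass:LFleqlambda1} together with \eqref{eq:globalLF}), the elementary inequality $|p+q|^2\leq (|p|+|q|)^2$ followed by Cauchy--Schwarz dominates the deterministic term by $\beta(\tau)^2 a_m$, where
\begin{equation*}
\beta(\tau) := e^{-\lambda_1\tau}+\frac{L_F(1-e^{-\lambda_1\tau})}{\lambda_1}
= 1-\frac{\lambda_1-L_F}{\lambda_1}(1-e^{-\lambda_1\tau}).
\end{equation*}
Because $L_F<\lambda_1$ (Assumption~\ref{ass:LFleqlambda1}), $\beta(\tau)<1$ for every $\tau>0$, and the hypothesis on $\tau_0$ quantifies the separation of $\beta(\tau)^2$ from $1$ on $(0,\tau_0]$. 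For the diffusion term, diagonalising $Q_n$ yields
\begin{equation*}
\int_0^\tau|E_n(\sigma)Q_n^{\frac12}|^2_{\mathcal{L}_2(H_n)}\diff{\sigma}
=\sum_{i=1}^n q_i\,\frac{1-e^{-2\lambda_i\tau}}{2\lambda_i}
\leq \frac{|(-A)^{\frac12}Q^{\frac12}|^2_{\mathcal{L}_2(H)}}{2\lambda_1^2}
=: C_0<\infty
\end{equation*}
by Assumption~\ref{ass:AQ}.

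The resulting recursion $a_{m+1}\leq \beta(\tau)^2 a_m + \varepsilon^2 C_0$ iterates to $a_m\leq \beta(\tau)^{2m}|y|^2 + \frac{\varepsilon^2 C_0}{1-\beta(\tau)^2}$, uniform in $m$. Applying the identical two-term splitting to \eqref{eq:4242} with $\tau$ replaced by $t-t_m\in[0,\tau]$ and using $\beta(\cdot)\leq 1$ on $[0,\tau]$ bounds $\E|Y_y^{\varepsilon,n}(t)|^2$ by $\E|Y_{y,m}^{\varepsilon,n}|^2+\varepsilon^2 C_0$, which extends the uniform bound to all $t\geq 0$. The main obstacle is the one-step deterministic estimate: making the cross term in $|E_n(\tau)u+(-A_n)^{-1}(I-E_n(\tau))F_n(u)|^2$ combine with $|E_n(\tau)u|^2$ and $|(-A_n)^{-1}(I-E_n(\tau))F_n(u)|^2$ into the convex combination $\beta(\tau)^2$. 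This works because $E_n(\tau)$ and $(-A_n)^{-1}(I-E_n(\tau))$ share the eigenbasis of $A_n$ so their component-wise bounds add coherently; the strict dissipativity $L_F<\lambda_1$ then forces $\beta(\tau)<1$, and the prescribed form of $\tau_0$ reflects the precise quantitative contraction rate in the fully discrete scheme.
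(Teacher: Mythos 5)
Your argument is correct and reaches the stated conclusion, but it is organized differently from the paper's. The paper first removes the noise by subtracting the discrete stochastic convolution, setting $\bar{Y}_{y,m}^{\varepsilon,n} := Y_{y,m}^{\varepsilon,n} - \varepsilon\Gamma^{n}(t_{m})$, then iterates a deterministic recursion for $|\bar{Y}_{y,m}^{\varepsilon,n}|_{L^2(\Omega;H)}$ via the triangle inequality and the operator bounds $|E_n(\tau)|_{\mathcal{L}(H_n)}\leq e^{-\lambda_1\tau}$, $|(-A_n)^{-1}(I-E_n(\tau))|_{\mathcal{L}(H_n)}\leq\tau$, finally invoking $\sup_{t\geq 0}|\Gamma^n(t)|_{L^2(\Omega;H)}\leq C$. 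The constraint $\tau\le\tau_0$ enters there to turn $e^{-\lambda_1\tau}+\tau L_F\frac{1-e^{-\lambda_1\tau}}{\lambda_1\tau}$ into the clean exponential contraction $e^{-(\lambda_1-L_F)\tau/2}$, which is what makes the geometric tail $C\tau\,e^{(\lambda_1-L_F)\tau/2}/(e^{(\lambda_1-L_F)\tau/2}-1)$ bounded uniformly in $\tau$. You instead work directly with $a_m=\E|Y_{y,m}^{\varepsilon,n}|^2$, split $\E|\cdot|^2$ using the $\mathcal{F}_{t_m}$-measurability of the drift term and the independence plus centering of the fresh stochastic integral, and close the recursion with It\^o's isometry. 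Both routes are correct, and yours is the more direct second-moment computation; the paper's device of subtracting $\Gamma^n$ is closer in spirit to how the tail estimate for the invariant measure is handled in Lemma~\ref{lem:stochasticevolutiontailestimate} and reuses the bound \eqref{eq:GammanC0tHn2}.

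One remark on the role of $\tau_0$ in your version: the hypothesis $\tau\leq\tau_0$ is actually not used, since $\beta(\tau)=e^{-\lambda_1\tau}+\tfrac{L_F}{\lambda_1}(1-e^{-\lambda_1\tau})<1$ for every $\tau>0$ once $L_F<\lambda_1$. Your intuitive explanation, that ``the hypothesis on $\tau_0$ quantifies the separation of $\beta(\tau)^2$ from $1$ on $(0,\tau_0]$,'' is not literally correct: $\beta(\tau)\to 1$ as $\tau\to 0^+$, so there is no uniform separation. What keeps your final bound $\beta(\tau)^{2m}|y|^2+\varepsilon^2 C_0/(1-\beta(\tau)^2)$ under control as $\tau\to 0$ is not separation of $\beta$ from $1$, but the fact that the per-step noise variance $\int_0^\tau|E_n(\sigma)Q_n^{1/2}|^2_{\mathcal{L}_2(H_n)}\diff{\sigma}$ is $O(\tau)$, matching the rate $1-\beta(\tau)^2\sim 2(\lambda_1-L_F)\tau$. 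Your bound with the $\tau$-independent $C_0$ therefore gives a constant that blows up as $\tau\to 0$; this is still admissible for the lemma as stated (the conclusion only asks for $C$ independent of $t$), but it is weaker than the paper's argument, which yields a bound uniform in $\tau\leq\tau_0$ as well. If you replace $C_0$ by the exact value $\sum_{i=1}^n q_i\,\tfrac{1-e^{-2\lambda_i\tau}}{2\lambda_i}\le\tau|Q_n^{1/2}|^2_{\mathcal{L}_2(H_n)}$, your argument recovers $\tau$-uniformity too, and then the role of $\tau_0$ disappears entirely from your route.
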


\begin{proof}
      We rewrite \eqref{eq:EE} as
      \begin{align*}
            Y_{y,m}^{\varepsilon,n}
            =&
            E_{n}^{m}(\tau)Y_{y,0}^{\varepsilon,n}
            +
            \int_{0}^{t_{m}}E_{n}(t_{m}-s)
            F_{n}(Y_{y,\lfloor s/\tau \rfloor}^{\varepsilon,n})
            \diff{s}
            +
            \varepsilon \int_{0}^{t_{m}}  E_{n}(t_{m}-s) Q_{n}^{\frac12}\diff{W_{n}(s)}.
      \end{align*}
      Noting $\{\Gamma^{n}(t)\}_{t \geq 0}$ given by \eqref{eq:Gammant} and setting $\bar{Y}_{y,m}^{\varepsilon,n} := Y_{y,m}^{\varepsilon,n} - \varepsilon\Gamma^{n}(t_{m})$, it follows that
      \begin{align*}
            \bar{Y}_{y,m}^{\varepsilon,n}
            =&
            E_{n}^{m}(\tau)\bar{Y}_{y,0}^{\varepsilon,n}
            +
            \int_{0}^{t_{m}}E_{n}(t_{m}-s)
            F_{n}(\bar{Y}_{y,\lfloor s/\tau \rfloor}^{\varepsilon,n}
            +\varepsilon\Gamma^{n}(\tau \lfloor s/\tau \rfloor))
            \diff{s},
            \quad
            \bar{Y}_{y,0}^{\varepsilon,n} = y
      \end{align*}
      and thus
      \begin{align*}
            \bar{Y}_{y,m}^{\varepsilon,n}
            =&
            E_{n}(\tau)\bar{Y}_{y,m-1}^{\varepsilon,n}
            +
            (-A_{n})^{-1}(I-E_{n}(\tau))
            F_{n}(\bar{Y}_{y,m-1}^{\varepsilon,n}
            +\varepsilon\Gamma^{n}(t_{m-1})).
      \end{align*}
      Applying $|E_{n}(\tau)|_{\mathcal{L}(H_{n})} \leq e^{-\lambda_{1}\tau}$ and
      $|(-A_{n})^{-1}(I-E_{n}(\tau))|_{\mathcal{L}(H_{n})} \leq (1-e^{-\lambda_{1}\tau})/\lambda_{1} \leq \tau$ yields
      \begin{align*}
            |\bar{Y}_{y,m}^{\varepsilon,n}|_{L^{2}(\Omega;H)}
      \leq&
            e^{-\lambda_{1}\tau}
            |\bar{Y}_{y,m-1}^{\varepsilon,n}|_{L^{2}(\Omega;H)}
            +
            \tau \frac{1-e^{-\lambda_{1}\tau}}{\lambda_{1}\tau}
            |F_{n}(\bar{Y}_{y,m-1}^{\varepsilon,n}
            +\varepsilon\Gamma^{n}(t_{m-1}))|_{L^{2}(\Omega;H)}
      \\\leq&
            \Big(e^{-\lambda_{1}\tau}
            +
            \tau L_{F}\frac{1-e^{-\lambda_{1}\tau}}{\lambda_{1}\tau}\Big)
            |\bar{Y}_{y,m-1}^{\varepsilon,n}|_{L^{2}(\Omega;H)}
            +
            \tau L_{F}\frac{1-e^{-\lambda_{1}\tau}}{\lambda_{1}\tau}
            |\varepsilon\Gamma^{n}(t_{m-1})|_{L^{2}(\Omega;H)}.
      \end{align*}
      Recalling $\sup\limits_{t \geq 0}
      |\Gamma^{n}(t)|_{L^{2}(\Omega;H)} \leq C$ in \eqref{eq:GammanC0tHn2} and
      $\tau \leq \tau_{0}$ with $\tau_{0}$ satisfying
      $\frac{e^{\lambda_{1}\tau_{0}}-1}{\lambda_{1}\tau_{0}} = \frac{\lambda_{1}+L_{F}}{2L_{F}}$,
      we get
      \begin{align*}
            |\bar{Y}_{y,m}^{\varepsilon,n}|_{L^{2}(\Omega;H)}
      \leq&
            \Big(e^{-\lambda_{1}\tau}
            +
            \tau L_{F}e^{-\lambda_{1}\tau}
            +
            \tau\frac{\lambda_{1}-L_{F}}{2}e^{-\lambda_{1}\tau}\Big)
            |\bar{Y}_{y,m-1}^{\varepsilon,n}|_{L^{2}(\Omega;H)}
            +
            C\tau
      \\=&
            \Big(1 + \tau\frac{\lambda_{1}+L_{F}}{2}\Big)
            e^{-\lambda_{1}\tau}
            |\bar{Y}_{y,m-1}^{\varepsilon,n}|_{L^{2}(\Omega;H)}
            +
            C\tau
      \\\leq&
            e^{-\frac{\lambda_{1}-L_{F}}{2}\tau}
            |\bar{Y}_{y,m-1}^{\varepsilon,n}|_{L^{2}(\Omega;H)}
            +
            C\tau
      \\\leq&
            e^{-\frac{\lambda_{1}-L_{F}}{2}m\tau}|y|
            +
            C\tau\frac{e^{\frac{\lambda_{1}-L_{F}}{2}\tau}}
                  {e^{\frac{\lambda_{1}-L_{F}}{2}\tau}-1}
      \\\leq&
            |y|
            +
            2C\frac{e^{\frac{\lambda_{1}-L_{F}}{2}\tau_{0}}}
                  {\lambda_{1}-L_{F}}.
      \end{align*}
      Thus we assert that there exists $C > 0$ independent of $m \in \N$ such that $\sup\limits_{m \in \N}|Y_{y,m}^{\varepsilon,n}|_{L^{2}(\Omega;H)} \leq C$.
      It follows from \eqref{eq:4242} that
      \begin{align*}
            |Y_{y}^{\varepsilon,n}(t)|_{L^{2}(\Omega;H)}
      \leq&
            |Y_{y,m-1}^{\varepsilon,n}|_{L^{2}(\Omega;H)}
            +
            |F_{n}(Y_{y,m-1}^{\varepsilon,n})|_{L^{2}(\Omega;H)}
            +
            |\varepsilon \Gamma^{n}(t)|_{L^{2}(\Omega;H)}
            \leq
            C
      \end{align*}
      with $C$ being independent of $t$. Thus we complete the proof.
\end{proof}

By Lemma \ref{lem:uniformbound2ordermoment} and \cite[Proposition 7.10]{da2006introduction}, the family of probability measures $\{\mu_{t}^{\varepsilon,n,\tau}\}_{t > 0}$, defined by
      $$
            \mu_{t}^{\varepsilon,n,\tau}(B)
            :=
            \frac{1}{t} \int_{0}^{t}
            \P\big(Y_{0}^{\varepsilon,n}(s) \in B\big)
            \diff{s},
            \quad B \in \mathcal{B}(H_{n}), t > 0,
      $$
is tight. It follows from the Krylov--Bogoliubov theorem (\cite[Theorem 7.1]{da2006introduction}) that there exists $\{t_{i}\}_{i \in \N^{+}} \uparrow +\infty$ (possibly depending on $\varepsilon$) such that the sequence $\{\mu_{t_{i}}^{\varepsilon,n,\tau}\}_{i \in \N^{+}}$ converges weakly to some probability measure $\mu^{\varepsilon,n,\tau}$ on $(H_{n},\mathcal{B}(H_{n}))$, which is invariant for \eqref{eq:saptiotemporalconteq}. Following the procedures
in Subsection \ref{subsec:semiinvameaLDPs}, we establish the LDP for $\{\mu^{\varepsilon,n,\tau}\}_{\varepsilon > 0}$ on $H_{n}$ with the rate function $V^{n,\tau}$ defined by
\begin{equation}\label{eq:Vntauy}
      V^{n,\tau}(u)
      =
      \inf\{I_{0,T}^{n,\tau,0}(z) : T > 0, z \in C([0,T];H_{n}), z(0) = 0, z(T) = u \},
      \quad u \in H_{n}.
\end{equation}

\begin{Theorem}
      Under assumptions in Lemma \ref{lem:uniformbound2ordermoment}, $\{\mu^{\varepsilon,n,\tau}\}_{\varepsilon > 0}$ satisfies an LDP on $H_{n}$ with the rate $\frac{1}{\varepsilon^{2}}$ and the good rate function $V^{n,\tau}$ given by \eqref{eq:Vntauy}.
\end{Theorem}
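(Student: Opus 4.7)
The plan is to mimic the three-step structure used for the semi-discrete LDP in Subsection \ref{subsec:semiinvameaLDPs}, replacing $X_{y}^{\varepsilon,n}$ by $Y_{y}^{\varepsilon,n}$, $I_{0,T}^{n,y}$ by $I_{0,T}^{n,\tau,y}$, and $V^{n}$ by $V^{n,\tau}$, while treating the time stepsize $\tau$ as a fixed parameter (not a small parameter). The three steps are: (i) $V^{n,\tau}$ is a good rate function; (ii) the LDP lower bound for $\{\mu^{\varepsilon,n,\tau}\}_{\varepsilon>0}$; (iii) the LDP upper bound. In all three steps we lean on the uniform sample path LDP from Theorem \ref{th:fullLDPsamplepath} and the uniform $L^{2}$-bound from Lemma \ref{lem:uniformbound2ordermoment}.

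First, I would show $V^{n,\tau}$ is a good rate function by verifying that its level sets $K^{n,\tau}(\alpha)=\{u\in H_{n}:V^{n,\tau}(u)\le \alpha\}$ are bounded; since $H_{n}$ is finite dimensional and $V^{n,\tau}$ is lower semicontinuous, this suffices. The boundedness follows by repeating the argument in Theorem \ref{th:semiinvariantgood}: pick a near-minimizing $(T_{\kappa},z_{\kappa},\psi_{\kappa})$ attaining $V^{n,\tau}(y)$, then use an a priori bound for the piecewise-skeleton equation $\dot z(t) = A_{n}z(t)+F_{n}(z(\tau\lfloor t/\tau\rfloor))+Q_{n}^{1/2}\psi(t)$ analogous to Lemma \ref{lem:4.9}. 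The extra difficulty is that dissipativity is no longer immediate in the skeleton because $F_{n}$ is evaluated at $\tau\lfloor t/\tau\rfloor$; under the stepsize condition of Lemma \ref{lem:uniformbound2ordermoment} this can still be pushed through by a Grönwall-type estimate on the difference $z(t)-\int_{0}^{t}E_{n}(t-s)Q_{n}^{1/2}\psi(s)\diff{s}$, exactly as in the proof of that lemma.

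Second, I would establish the lower bound by copying the strategy of Theorem \ref{th:semiinvariantmeasurelowerbounded}. Given $\bar y\in H_{n}$ with $V^{n,\tau}(\bar y)<\infty$ and target radius $\delta$, pick a near-minimizer $(\bar T,\bar z,\bar\varphi)$ and, for $T>\bar T$ and $|y|\le K$, extend $\bar\varphi$ by $0$ on $[0,T-\bar T]$ to define $\tilde\varphi$ and $\tilde z$. Use the dissipativity of the homogeneous skeleton (so $|z_{0,y}^{n,\tau,0}(t)|\to 0$ as $t\to\infty$ uniformly in $|y|\le K$, again via the stepsize condition) to show $\sup_{|y|\le K}|z_{0,y}^{n,\tau,\tilde\varphi}(\tilde T)-\bar y|\le \delta/2$ for some $\tilde T$. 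Combining the invariance of $\mu^{\varepsilon,n,\tau}$, the sample-path lower bound \eqref{eq:fullsoluldplower}, and the fact that $\mu^{\varepsilon,n,\tau}(|y|\le K)\to 1$ as $\varepsilon\to 0$ (which in turn follows from a Chebyshev estimate using Lemma \ref{lem:uniformbound2ordermoment}) yields the claimed lower bound.

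Third, for the upper bound I would follow Theorem \ref{th:semiinvariantmeasureupperbounded}, decomposing $\mu^{\varepsilon,n,\tau}(\rho^{H}(\cdot,K^{n,\tau}(\alpha))\ge\delta)$ into a tail term, a Markov-type term involving $H_{K,L,J}$, and a core term, and controlling each by the sample-path upper bound in Theorem \ref{th:fullLDPsamplepath} combined with the analogues of Lemmas \ref{lem:stochasticevolutiontailestimate}, \ref{lem:zTzK0TnLalpha} and \ref{lem:infI0barJnyz310} for the full discretization. The main obstacle I anticipate is the exponential tail estimate: while the semi-discrete case uses the explicit Gaussian law of $\Gamma^{n}(t)$ and Fernique, the full discretization requires bounding $Y_{0}^{\varepsilon,n}(t)$ by $\varepsilon$ times a Gaussian-like object $\widetilde\Gamma^{n}(t):=\int_{0}^{t}E_{n}(t-s)Q_{n}^{1/2}\diff{W_{n}(s)}$ plus a controllable remainder arising from the frozen drift term $F_{n}(Y_{y,\lfloor s/\tau\rfloor}^{\varepsilon,n})$; iterating the one-step bound from Lemma \ref{lem:uniformbound2ordermoment} in $L^{p}(\Omega;H)$ for arbitrary $p$, one obtains Gaussian moment growth uniform in $t$ and $m$, after which Fernique and Markov yield $\mu^{\varepsilon,n,\tau}(|u|>\bar K)\le \exp(-\alpha/\varepsilon^{2})$ for $\bar K$ large. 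Once this tail estimate is in place, the remaining geometric arguments (Lemmas \ref{lem:zTzK0TnLalpha} and \ref{lem:infI0barJnyz310}) transfer verbatim, because they depend only on the dissipativity of the skeleton flow and the compactness of its level sets, both of which we will have already established.
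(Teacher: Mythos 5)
Your proposal takes essentially the same route as the paper, which itself offers no standalone proof of this theorem but merely writes ``Following the procedures in Subsection \ref{subsec:semiinvameaLDPs}, we establish the LDP for $\{\mu^{\varepsilon,n,\tau}\}_{\varepsilon>0}$''; you correctly identify the three steps (good rate function, lower bound, upper bound), the need to push dissipativity through the frozen-drift skeleton under the stepsize restriction, and the exponential tail estimate as the delicate point. The one place you could simplify is the tail bound: rather than an $L^{p}$-moment iteration, the recursion in Lemma \ref{lem:uniformbound2ordermoment}, run pathwise with $y=0$ and $F_{n}(0)=0$, directly yields $|Y_{0}^{\varepsilon,n}(t)|\le \varepsilon C\sup_{s\le t}|\Gamma^{n}(s)|$ with $C$ independent of $t$, after which the Gaussian law of $\Gamma^{n}$ and Fernique's theorem apply verbatim as in Lemma \ref{lem:stochasticevolutiontailestimate}.
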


We give the definition of weakly asymptotical preservation for the LDP of invariant measures by a numerical method to characterize the error between $V^{n,\tau}$ and $V$.

\begin{Definition}
      We say that 
      the fully-discrete numerical method \eqref{eq:saptiotemporalconteq}
      weakly asymptotically preserves the LDP of $\{\mu^{\varepsilon}\}_{\varepsilon > 0}$ if for any $\kappa > 0$ and $u \in \mathcal{D}_{V}$, there exist $n \in \N^{+}$, $\tau > 0$ and $u_{n,\tau} \in H_{n}$ such that
      $$
            |u-u_{n,\tau}| < \kappa,
            \qquad
            |V(u)-V^{n,\tau}(u_{n,\tau})| < \kappa.
      $$
\end{Definition}

\begin{Theorem}\label{fullinvameaLDPasypre}
      Suppose that assumptions in Lemma \ref{lem:uniformbound2ordermoment} hold.
      If 
      $F \equiv \textbf{0}$,
      then 
      the fully-discrete numerical method \eqref{eq:saptiotemporalconteq}
      weakly asymptotically preserves the LDP of $\{\mu^{\varepsilon}\}_{\varepsilon > 0}$.
\end{Theorem}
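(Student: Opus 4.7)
The plan is to exploit the fact that when $F \equiv \mathbf{0}$ the time--lag in the drift of the fully discrete skeleton disappears, so that the rate function $V^{n,\tau}$ coincides with $V^{n}$ on $H_{n}$, and the claim reduces to Theorem \ref{th:semiinvameaLDPasympre}. The first step is therefore to compare the two skeleton equations underlying $I_{0,T}^{n,y}$ and $I_{0,T}^{n,\tau,y}$. In general they differ by the piecewise frozen term $F_{n}(z(\tau \lfloor t/\tau \rfloor))$ instead of $F_{n}(z(t))$, but if $F \equiv \mathbf{0}$ both reduce to the linear equation $\dot{z}(t) = A_{n}z(t) + Q_{n}^{\frac12}\psi(t)$ with the same initial condition. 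Hence the mild solutions $z_{0,y}^{n,\psi}$ and $z_{0,y}^{n,\tau,\psi}$ coincide for every control $\psi \in L^{2}(0,T;H_{n})$, and comparing \eqref{eq:SPDEsemiratefun} with \eqref{eq:SPDEfullratefun} gives $I_{0,T}^{n,\tau,y}(z) = I_{0,T}^{n,y}(z)$ for every $z \in C([0,T];H_{n})$.

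The second step is to pass this identification to the level of the invariant-measure rate functions. Taking infima in \eqref{eq:Vntauy} and \eqref{eq:Vny} over the same class of paths yields $V^{n,\tau}(v) = V^{n}(v)$ for every $v \in H_{n}$. Given $\kappa > 0$ and $u \in \mathcal{D}_{V}$, I would then invoke Theorem \ref{th:semiinvameaLDPasympre} (whose hypotheses are contained in those of Lemma \ref{lem:uniformbound2ordermoment}, since $F \equiv \mathbf{0}$ automatically satisfies Assumption \ref{ass:LFleqlambda1} with $L_{F}=0 < \lambda_{1}$) to produce $n \in \N^{+}$ and $u_{n} \in H_{n}$ with $|u - u_{n}| < \kappa$ and $|V(u) - V^{n}(u_{n})| < \kappa$. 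Setting $u_{n,\tau} := u_{n}$ and picking any admissible $\tau \in (0,\tau_{0}]$, the identity $V^{n,\tau}(u_{n,\tau}) = V^{n}(u_{n})$ yields $|u - u_{n,\tau}| < \kappa$ and $|V(u) - V^{n,\tau}(u_{n,\tau})| < \kappa$, which is precisely the weakly asymptotical preservation criterion.

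There is essentially no technical obstacle in this argument: the only potentially delicate piece, namely the error between $I_{0,T}^{n,y}$ and $I_{0,T}^{n,\tau,y}$ that was estimated in the proof of Theorem \ref{th:fullLDPasympre} through the quantity $\int_{0}^{T}|Q_{n}^{-\frac12}(F_{n}(z_{n}(\tau\lfloor t/\tau \rfloor)) - F_{n}(z_{n}(t)))|^{2}\diff{t}$, vanishes identically once $F \equiv \mathbf{0}$. The only point that deserves a brief comment is that although the invariant measures $\mu^{\varepsilon,n,\tau}$ and $\mu^{\varepsilon,n}$ themselves are genuinely distinct (the full discretization changes the dynamics even in the linear case), the \emph{rate functions} governing their LDPs agree, and Definition~4.4 depends only on these rate functions. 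Thus the reduction to Theorem \ref{th:semiinvameaLDPasympre} is legitimate and completes the proof.
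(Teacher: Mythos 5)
Your proof is correct, and it takes a genuinely cleaner route than the paper's. You observe up front that when $F \equiv \mathbf{0}$ the two skeleton equations (with and without the time lag $\tau\lfloor t/\tau\rfloor$) are \emph{literally the same} ODE, hence $I_{0,T}^{n,\tau,y} = I_{0,T}^{n,y}$ pointwise and, taking infima, $V^{n,\tau} = V^{n}$ on $H_{n}$. The conclusion then follows in one step from Theorem~\ref{th:semiinvameaLDPasympre} with $u_{n,\tau}:=u_{n}$ and an arbitrary admissible $\tau \in (0,\tau_{0}]$. The paper instead runs the general machinery: for one inequality it invokes the $\tau\to 0$ limit \eqref{eq:410410410} from Theorem~\ref{th:fullLDPasympre} to choose a $\tau$ making $|I_{0,T_{n,\kappa}}^{n,\tau,0}(z_{n,\kappa})-I_{0,T_{n,\kappa}}^{n,0}(z_{n,\kappa})|$ small (cf.~\eqref{eq:520}--\eqref{eq:521}), and for the reverse inequality it uses the $F\equiv\mathbf{0}$ identity only on the particular minimizing path $z_{n,\tau,\kappa}$ (cf.~\eqref{eq:524}--\eqref{eq:525}). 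Both steps are subsumed by your observation that the rate functions coincide identically; you thus eliminate both the $\kappa$-bookkeeping in \eqref{eq:518}--\eqref{eq:525} and the dependence on a specific small $\tau$. The trade-off is that the paper's layout is written so as to generalize more gracefully to a hypothetical nonzero $F$ (where the limit argument would be genuinely needed), whereas your argument is tied to the linear case — but that is exactly the case covered by the theorem, so nothing is lost. Your side remarks (that $F\equiv\mathbf{0}$ trivially satisfies Assumption~\ref{ass:LFleqlambda1}, and that the preservation criterion depends only on rate functions, not on the invariant measures themselves) are accurate and harmless.
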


%
%
%
%

\begin{proof}
      For any $\kappa > 0$ and $u \in \mathcal{D}_{V}$, Theorem \ref{th:semiinvameaLDPasympre} shows that there exist
      $n \in \N^{+}$ and $u_{n} \in H_{n}$ such that
      \begin{equation}\label{eq:invapart111}
            |u-u_{n}| < \frac{\kappa}{2} < \kappa,
            \qquad
            |V(u)-V^{n}(u_{n})| < \frac{\kappa}{2}.
      \end{equation}
      Then the definition of $V^{n}(u_{n})$ in \eqref{eq:Vny} ensures that for any $\kappa > 0$ there exist $T_{n,\kappa} > 0, z_{n,\kappa} \in C([0,T_{n,\kappa}];H_{n}), z_{n,\kappa}(0) = 0$ and  $z_{n,\kappa}(T_{n,\kappa}) = u_{n}$ such that
      \begin{equation}\label{eq:518}
            I_{0,T_{n,\kappa}}^{n,0}(z_{n,\kappa})
            \leq
            V^{n}(u_{n}) + \frac{\kappa}{4}
            <
            +\infty,
      \end{equation}
      which implies $z_{n,\kappa} \in \mathcal{H}_{1}^{n,0}(T_{n,\kappa})$ by \eqref{eq:I0Tnyz}. According to \eqref{eq:Vntauy}, we have
      \begin{equation}\label{eq:519}
            V^{n,\tau}(u_{n})
            \leq
            I_{0,T_{n,\kappa}}^{n,\tau,0}(z_{n,\kappa}).
      \end{equation}
      As \eqref{eq:410410410} in Theorem \ref{th:fullLDPasympre} ensures
      $
            \lim\limits_{\tau \to 0}I_{0,T_{n,\kappa}}^{n,\tau,0}(z_{n,\kappa})
            =
            I_{0,T_{n,\kappa}}^{n,0}(z_{n,\kappa})$, i.e.,
      there exists $\tau > 0$ such that
      \begin{equation}\label{eq:520}
            \big|I_{0,T_{n,\kappa}}^{n,\tau,0}(z_{n,\kappa})
            -
            I_{0,T_{n,\kappa}}^{n,0}(z_{n,\kappa})\big|
            \leq
            \frac{\kappa}{4},
      \end{equation}
      it follows from \eqref{eq:518}, \eqref{eq:519} and \eqref{eq:520} that
      \begin{equation}\label{eq:521}
            V^{n,\tau}(u_{n})
            \leq
            I_{0,T_{n,\kappa}}^{n,\tau,0}(z_{n,\kappa})
            \leq
            I_{0,T_{n,\kappa}}^{n,0}(z_{n,\kappa})
            +
            \frac{\kappa}{4}
            \leq
            V^{n}(u_{n}) + \frac{\kappa}{2}.
      \end{equation}
      It remains to show $V^{n}(u_{n}) \leq V^{n,\tau}(u_{n}) + \frac{\kappa}{2}$ for the above given $\tau > 0$. The definition of $V^{n,\tau}(u_{n})$ in \eqref{eq:Vntauy} implies that for $\kappa > 0$ there exists $T_{n,\tau,\kappa} > 0, z_{n,\tau,\kappa} \in C([0,T_{n,\tau,\kappa}];H_{n}), z_{n,\tau,\kappa}(0) = 0$ and $z_{n,\tau,\kappa}(T_{n,\tau,\kappa}) = u_{n}$ such that
      \begin{equation}\label{eq:522}
            I_{0,T_{n,\tau,\kappa}}^{n,\tau,0}(z_{n,\tau,\kappa})
            \leq
            V^{n,\tau}(u_{n}) + \frac{\kappa}{2}
            \leq
            V^{n}(u_{n}) + \kappa
            <
            +\infty,
      \end{equation}
      which together with \eqref{eq:I0TnMyz} implies $z_{n,\tau,\kappa} \in \mathcal{H}_{1}^{n,0}(T_{n,\tau,\kappa})$.
      The definition of $V^{n}(u_{n})$ in \eqref{eq:Vny} yields
      \begin{equation}\label{eq:523}
            V^{n}(u_{n})
            \leq
            I_{0,T_{n,\tau,\kappa}}^{n,0}(z_{n,\tau,\kappa}).
      \end{equation}
      Taking advantaging of $F \equiv \textbf{0}$, we have
      \begin{equation}\label{eq:524}
            I_{0,T_{n,\tau,\kappa}}^{n,0}(z_{n,\tau,\kappa})
            =
            \frac{1}{2}
            \int_{0}^{T_{n,\tau,\kappa}}
            \Big|Q_{n}^{-\frac12}\Big(\frac{\diff{z_{n,\tau,\kappa}(t)}}{\diff{t}}
                 -
                 A_{n}z_{n,\tau,\kappa}(t)
                 \Big)
            \Big|^{2}
            \diff{t}
            =
            I_{0,T_{n,\tau,\kappa}}^{n,\tau,0}(z_{n,\tau,\kappa}).
      \end{equation}
      This together with \eqref{eq:522}, \eqref{eq:523} and \eqref{eq:524} leads to
      \begin{equation}\label{eq:525}
            V^{n}(u_{n})
            \leq
            I_{0,T_{n,\tau,\kappa}}^{n,0}(z_{n,\tau,\kappa})
            =
            I_{0,T_{n,\tau,\kappa}}^{n,\tau,0}(z_{n,\tau,\kappa})
            \leq
            V^{n,\tau}(u_{n}) + \frac{\kappa}{2}.
      \end{equation}
      Then \eqref{eq:521} and \eqref{eq:525} show
      \begin{equation}\label{eq:invapart222}
            |V^{n}(u_{n})-V^{n,\tau}(u_{n})| \leq \frac{\kappa}{2}.
      \end{equation}
      For $n,\tau$ given by \eqref{eq:invapart111} and \eqref{eq:520}, we set $u_{n,\tau} := u_{n}$. By \eqref{eq:invapart111}, \eqref{eq:invapart222} and the triangle inequality, we complete the proof.
\end{proof}

\begin{Remark}
       When $F \colon H \to H$ is a linear mapping, i.e., $F(u) = Bu$ for some $B \in \mathcal{L}(H)$ with $|B|_{\mathcal{L}(H)} < \lambda_{1}$, we can redefine the unbounded linear operator $A$ by $A+B$ to vanish $F$, 
       as well as the numerical discretizations. Repeating procedure in Theorems \ref{th:semiinvameaLDPasympre} and \ref{fullinvameaLDPasypre} still leads to the weakly asymptotical preservation for the corresponding LDPs by the numerical approximations.
\end{Remark}

\bibliographystyle{abbrv}

\begin{thebibliography}{}

\end{thebibliography}


\begin{thebibliography}{10}

\bibitem{brzezniak2017large}
Z.~Brze\'{z}niak and S.~Cerrai.
\newblock Large deviations principle for the invariant measures of the {2D}
  stochastic {Navier--Stokes} equations on a torus.
\newblock {\em J. Funct. Anal.}, 273(6):1891--1930, 2017.

\bibitem{cerrai2004large}
S.~Cerrai and M.~R\"{o}ckner.
\newblock Large deviations for stochastic reaction-diffusion systems with
  multiplicative noise and non-{L}ipschitz reaction term.
\newblock {\em Ann. Probab.}, 32(1B):1100--1139, 2004.

\bibitem{cerrai2005largeinvariant}
S.~Cerrai and M.~R\"{o}ckner.
\newblock Large deviations for invariant measures of stochastic
  reaction--diffusion systems with multiplicative noise and non-{L}ipschitz
  reaction term.
\newblock {\em Ann. I. H. Poincar\'{e}-PR}, 41(1):69--105, 2005.

\bibitem{chen2020symplectic}
C.~Chen.
\newblock A symplectic discontinuous {G}alerkin full discretization for
  stochastic {M}axwell equations.
\newblock {\em arXiv:2009.09880}.

\bibitem{chen2020large}
C.~Chen, J.~Hong, D.~Jin, and L.~Sun.
\newblock Large deviations principles for symplectic discretizations of
  stochastic linear {S}chr\"{o}dinger equation.
\newblock {\em arXiv:2006.01357}.

\bibitem{chen2021asymptotically}
C.~Chen, J.~Hong, D.~Jin, and L.~Sun.
\newblock Asymptotically-preserving large deviations principles by stochastic
  symplectic methods for a linear stochastic oscillator.
\newblock {\em SIAM J. Numer. Anal.}, 59(1):32--59, 2021.

\bibitem{chen2017approximation}
C.~Chen, J.~Hong, and X.~Wang.
\newblock Approximation of invariant measure for damped stochastic nonlinear
  {S}chr{\"o}dinger equation via an ergodic numerical scheme.
\newblock {\em Potential Anal.}, 46(2):323--367, 2017.

\bibitem{CGW2020ErgodicityANM}
Z.~Chen, S.~Gan, and X.~Wang.
\newblock A full-discrete exponential {E}uler approximation of the invariant
  measure for parabolic stochastic partial differential equations.
\newblock {\em Appl. Numer. Math.}, 157:135--158, 2020.

\bibitem{cui2021weak}
J.~Cui, J.~Hong, and L.~Sun.
\newblock Weak convergence and invariant measure of a full discretization for
  parabolic {SPDEs} with non-globally {Lipschitz} coefficients.
\newblock {\em Stochastic Process. Appl.}, 134:55--93, 2021.

\bibitem{da2006introduction}
G.~Da~Prato.
\newblock {\em An Introduction to Infinite-Dimensional Analysis}.
\newblock Universitext. Springer Science and Business Media, Berlin, 2006.

\bibitem{da2014stochastic}
G.~Da~Prato and J.~Zabczyk.
\newblock {\em Stochastic Equations in Infinite Dimensions}.
\newblock Encyclopedia of Mathematics and Its Applications 152. Cambridge
  University Press, Cambridge, 2014.

\bibitem{dembo2009large}
A.~Dembo and O.~Zeitouni.
\newblock {\em Large Deviations Techniques and Applications}.
\newblock Stochastic Modelling and Applied Probability 38. Springer, Berlin,
  2009.

\bibitem{freidlin1988random}
M.~I. Freidlin.
\newblock Random perturbations of reaction-diffusion equations: the
  quasi-deterministic approximation.
\newblock {\em Trans. Amer. Math. Soc.}, 305(2):665--697, 1988.

\bibitem{freidlin1984random}
M.~I. Freidlin and A.~D. Wentzell.
\newblock {\em Random Perturbations of Dynamical Systems}.
\newblock Grundlehren der mathematischen Wissenschaften 260. Springer, New
  York, 1984.

\bibitem{gadat2013large}
S.~Gadat, F.~Panloup, and C.~Pellegrini.
\newblock Large deviation principle for invariant distributions of memory
  gradient diffusions.
\newblock {\em Electron. J. Probab.}, 18(81):1--34, 2013.

\bibitem{hong2021numerical}
J.~Hong, D.~Jin, and D.~Sheng.
\newblock Numerical approximations of one-point large deviations rate functions
  of stochastic differential equations with small noise.
\newblock {\em arXiv:2102.04061}.

\bibitem{hong2020numerically}
J.~Hong, D.~Jin, D.~Sheng, and L.~Sun.
\newblock Numerically asymptotical preservation of the large deviations
  principles for invariant measures of {L}angevin equations.
\newblock {\em arXiv:2009.13336}.

\bibitem{hong2019invariant}
J.~Hong and X.~Wang.
\newblock {\em Invariant Measures for Stochastic Nonlinear Schr\"{o}dinger
  Equations: Numerical Approximations and Symplectic Structures}.
\newblock Lecture Notes in Mathematics 2251. Springer, Singapore, 2019.

\bibitem{hong2017numerical}
J.~Hong, X.~Wang, and L.~Zhang.
\newblock Numerical analysis on ergodic limit of approximations for stochastic
  {NLS} equation via multi-symplectic scheme.
\newblock {\em SIAM J. Numer. Anal.}, 55(1):305--327, 2017.

\bibitem{jentzen2009overcoming}
A.~Jentzen and P.~E. Kloeden.
\newblock Overcoming the order barrier in the numerical approximation of
  stochastic partial differential equations with additive space-time noise.
\newblock {\em Proc. R. Soc. A}, 465(2102):649--667, 2009.

\bibitem{kloeden1992numerical}
P.~E. Kloeden and E.~Platen.
\newblock {\em Numerical Solution of Stochastic Differential Equations}.
\newblock Application of Mathematics, Stochastic Modelling and Applied
  Probability 23. Springer, Berlin, 1992.

\bibitem{kruse2014strong}
R.~Kruse.
\newblock {\em Strong and Weak Approximation of Semilinear Stochastic Evolution
  Equations}.
\newblock Lecture Notes in Mathematics 2093. Springer, New York, 2014.

\bibitem{lord2014introduction}
G.~J. Lord, C.~E. Powell, and T.~Shardlow.
\newblock {\em An Introduction to Computational Stochastic {PDE}s}.
\newblock Number~50 in Cambridge Texts in Applied Mathematics. Cambridge
  University Press, 2014.

\bibitem{peszat1994large}
S.~Peszat.
\newblock Large deviation principle for stochastic evolution equations.
\newblock {\em Probab. Theory Relat. Fields}, 98(1):113--136, 1994.

\bibitem{prevot2007concise}
C.~Pr{\'e}v{\^o}t and M.~R{\"o}ckner.
\newblock {\em A Concise Course on Stochastic Partial Differential Equations}.
\newblock Lecture Notes in Mathematics 1905. Springer, New York, 2007.

\bibitem{sowers1992largeinvariant}
R.~Sowers.
\newblock Large deviations for the invariant measure of a reaction-diffusion
  equation with non-{G}aussian perturbations.
\newblock {\em Probab. Theory Relat. Fields}, 92(3):393--421, 1992.

\bibitem{sowers1992large}
R.~B. Sowers.
\newblock Large deviations for a reaction-diffusion equation with
  non-{G}aussian perturbations.
\newblock {\em Ann. Probab.}, 20(1):504--537, 1992.

\bibitem{wang2015note}
X.~Wang and R.~Qi.
\newblock A note on an accelerated exponential {E}uler method for parabolic
  {SPDE}s with additive noise.
\newblock {\em Appl. Math. Lett.}, 46:31--37, 2015.

\end{thebibliography}

\end{document}